\newcommand{\QQ}{\mathbb{Q}}
\newcommand{\CC}{\mathbb{C}}
\newcommand{\PP}{\mathbb{P}}
\newcommand{\CE}{\mathcal{E}}
\newcommand{\CF}{\mathcal{F}}
\newcommand{\CI}{\mathcal{I}}
\newcommand{\CL}{\mathcal{L}}
\newcommand{\CO}{\mathcal{O}}
\newcommand{\CP}{\mathcal{P}}
\newcommand{\CQ}{\mathcal{Q}}
\newcommand{\CV}{\mathcal{V}}
\newcommand{\alb}{{\rm alb}}
\newcommand{\Hom}{{\mathrm{Hom}}}
\newcommand{\Pic}{{\mathrm{Pic}}}
\newcommand{\baselocus}{{\mathrm{Bs}}}
\newcommand{\movable}{{\mathrm{Mov}}}
\newcommand{\supp}{{\mathrm{Supp}}}
\newcommand{\Sym}{{\mathrm{Sym}}}
\newcommand{\ev}{{\mathrm{ev}}}
\newcommand{\vol}{{\mathrm{vol}}}
\newcommand{\rank}{{\mathrm{rank}\,}}
\newcommand{\roundup}[1]{{\left\lceil #1 \right\rceil}}
\newcommand{\rounddown}[1]{{\left\lfloor #1 \right\rfloor}}
\begin{document}

\title{Noether inequality for irregular threefolds of general type}

\author{Yong Hu}
\author{Tong Zhang}
\date{\today}

\address[Y.H.]{School of Mathematical Sciences, Shanghai Jiao Tong University, 800 Dongchuan Road, Shanghai 200240, People's Republic of China}
\email{yonghu@sjtu.edu.cn}

\address[T.Z.]{School of Mathematical Sciences, Ministry of Education Key Laboratory of Mathematics and Engineering Applications \& Shanghai Key Laboratory of PMMP,  East China Normal University, Shanghai 200241, China}
\email{tzhang@math.ecnu.edu.cn, mathtzhang@gmail.com}

\begin{abstract}
	Let $X$ be a smooth irregular $3$-fold of general type over $\CC$. We prove that the optimal Noether inequality 
	$$
	\vol(X) \ge \frac{4}{3}p_g(X)
	$$ 
	holds if $p_g(X) \ge 16$ or if $X$ has a Gorenstein minimal model. Moreover, when $X$ attains the equality and $p_g(X) \ge 16$, its canonical model can be explicitly described.
\end{abstract}

\maketitle
	
\tableofcontents
	

\theoremstyle{plain}
\newtheorem{theorem}{Theorem}[section]
\newtheorem{lemma}[theorem]{Lemma}
\newtheorem{coro}[theorem]{Corollary}
\newtheorem{prop}[theorem]{Proposition}
\newtheorem{defi}[theorem]{Definition}
\newtheorem{ques}[theorem]{Question}
\newtheorem{conj}[theorem]{Conjecture}

\newtheorem*{ques*}{Question}
\newtheorem*{conj*}{Conjecture}

\theoremstyle{remark}
\newtheorem{remark}[theorem]{Remark}
\newtheorem{assumption}[theorem]{Assumption}
\newtheorem{example}[theorem]{Example}

\numberwithin{equation}{section}
	

\section{Introduction}
Throughout this paper, all varieties are projective over the complex numbers $\CC$.

\subsection{Background} Classifying algebraic varieties is a central problem in algebraic geometry. One way to attack this problem, especially for varieties of general type, is to study the relation among their birational invariants first, and then to obtain an explicit description using these numerical information. This is often refered as the geography of algebraic varieties in the literature.

It is via this approach that many classification results for varieties of general type have been proved. As a typical example, it was proved by Noether \cite{Noether} that every smooth surface $S$ of general type satisfies the following optimal inequality:
\begin{equation} \label{eq: Noether dim 2}
	\vol(S) \ge 2p_g(S) - 4,
\end{equation}
which is now called the Noether inequality. In his celebrated paper \cite{HorikawaI}, Horikawa completely classified surfaces of general type which attain the equality in \eqref{eq: Noether dim 2}. The Noether inequality problem in dimension three also has attracted lots of attentions (e.g. \cite{Kobayashi,Chen,Catanese_Chen_Zhang,Chen_Chen,Chen_Hu}). Recently, J. Chen, M. Chen and C. Jiang \cite{Chen_Chen_Jiang} proved that every smooth $3$-fold $X$ of general type, either having a Gorenstein minimal model or with $p_g(X) \ge 11$, satisfies the following optimal Noether inequality:
\begin{equation} \label{eq: Noether dim 3}
	\vol(X) \ge \frac{4}{3} p_g(X) - \frac{10}{3}.
\end{equation}
Parallel to Horikawa's work \cite{HorikawaI}, they asked [Question 1.5, loc. cit] whether $3$-folds attaining the equality can be classified, and this question has been solved by the authors in \cite{Hu_Zhang3}. 

Motivated by the explicit classification of irregular varieties of general type, it is a fundamental question to what extent does the irregularity affect the distribution of birational invariants (see \cite[Introduction]{Persson} for a detailed description). In fact, if a smooth surface $S$ of general type satisfies the Noether equality in \eqref{eq: Noether dim 2}, then $q(S) = 0$ \cite[Theorem 10]{Bombieri}. Thus there must be a sharper Noether inequality for irregular surfaces of general type. This problem was studied extensively by Bombieri \cite[\S 10]{Bombieri} and later solved by Debarre \cite{Debarre} who proved that every irregular surface $S$ of general type satisfies the following optimal Noether inequality:
\begin{equation} \label{eq: Irregular Noether dim 2}
	\vol(S) \ge 2p_g(S).
\end{equation}
Moreover, Debarre proved that if the equality holds, then $1 \le q(S) \le 4$. 
Based on the results of Horikawa, Debarre, Catanese, Ciliberto, Mendes Lopes and Pardini \cite{HorikawaV,Debarre,Catanese_Ciliberto_Lopes,Ciliberto_Lopes_Pardini}, a complete classification of irregular surfaces of general type satisfying the equality in \eqref{eq: Irregular Noether dim 2} has been established. We refer to \cite[\S 1]{Ciliberto_Lopes_Pardini} for the history and details regarding this result.

Very similar to the surface case, it is proved recently that if a smooth $3$-fold $X$ of general type with $p_g(X) \ge 11$ satisfies the Noether equality in \eqref{eq: Noether dim 3}, then $q(X) = 0$ \cite{Hu_Zhang3}. Thus it is natural to ask:

\begin{ques*} 
	What is the optimal Noether inequality for irregular $3$-folds of general type? Once the desired inequality is obtained, can one classify the $3$-folds attaining the equality? 
\end{ques*}

However, unlike the Noether inequality \eqref{eq: Noether dim 3} which was known already two decades ago for smooth canonically polarized $3$-folds \cite{Chen}, this question is unknown even for irregular $3$-folds with smooth canonical models.

\subsection{Main results} The main purpose of this paper is to give an answer to the above question  for ``almost all" irregular $3$-folds of general type. Our first main result is the following.

\begin{theorem} \label{thm: main1}
	Let $X$ be a smooth irregular $3$-fold of general type. Then we have the following optimal Noether inequality:
	\begin{equation} \label{eq: Irregular Noether dim 3}
		\vol(X) \ge \frac{4}{3} p_g(X),
	\end{equation}
	provided one of the following conditions holds:
	\begin{itemize}
		\item [(1)] $X$ has a Gorenstein minimal model;
		\item [(2)] $q(X) = 1$ and $p_g(X) \ge 16$;
		\item [(3)] $q(X) \ge 2$.
	\end{itemize}
    Moreover, if $X$ satisfies the equality in \eqref{eq: Irregular Noether dim 3} as well as any of the above three conditions (1)--(3), then $1 \le q(X) \le 2$.
\end{theorem}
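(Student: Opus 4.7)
My plan is to proceed by a case analysis on the irregularity $q(X)$, using the Albanese morphism $a_X\colon X \to \mathrm{Alb}(X)$ to extract the extra information distinguishing irregular $3$-folds from the general case. The guiding principle is that the Chen--Chen--Jiang inequality \eqref{eq: Noether dim 3} already yields $\vol(X) \ge \frac{4}{3}p_g(X) - \frac{10}{3}$ under conditions (1) and (2), so the irregularity must be leveraged to eliminate the constant $-\frac{10}{3}$; in case (3) one must produce more positivity directly from the Albanese geometry.

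For case (1) and the sub-case $\albdim(X) \ge 2$ of case (3), the approach I would take is to combine Green--Lazarsfeld generic vanishing with Pareschi--Popa continuous global generation for $a_{X*}\omega_X$ and Riemann--Roch on the canonical model. The Gorenstein assumption keeps Riemann--Roch clean (eliminating the singularity correction that creates the $-\frac{10}{3}$), while the assumption $q(X) \ge 2$ forces the Albanese image and hence $a_{X*}\omega_X$ to have positive-dimensional support with enough positivity to absorb the constant. When $\albdim(X) = 1$ (which can occur in (3)), the Albanese factors through a fibration $f\colon X \to C$ with $g(C) = q(X) \ge 2$, and I would combine Viehweg-type semipositivity of $f_*\omega_{X/C}$ with the surface Noether inequality \eqref{eq: Noether dim 2}, or its irregular refinement \eqref{eq: Irregular Noether dim 2}, applied to a general fiber $S$; the contribution $2g(C) - 2 \ge 2$ from the base provides the sought-after slack.

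The main obstacle is case (2): $q(X) = 1$ with $p_g(X) \ge 16$. Here the Albanese is a surjective fibration $a\colon X \to E$ onto an elliptic curve with connected $2$-dimensional fibers, and the base contributes nothing numerically. I would use the Fujita semipositivity decomposition of $a_*\omega_{X/E}$ into an ample summand plus a numerically trivial unitary summand to bound $p_g(X) = h^0(E, a_*\omega_X)$ in terms of the ranks and degrees, and derive the volume bound from $K_X^3 = K_{X/E}^3$ via a slope-type inequality for fibered $3$-folds over elliptic curves; the surface Noether inequality on a general fiber $S$ links $p_g(S)$ to $K_S^2$, and assembling these fiberwise estimates should produce exactly the slope $\frac{4}{3}$. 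The hypothesis $p_g(X) \ge 16$ should enter to guarantee that the movable part of $|K_X|$ supplies enough horizontal sections to drive the fiberwise Noether argument, playing the role of $p_g(X) \ge 11$ in \cite{Chen_Chen_Jiang}; careful bookkeeping for the terminal non-Gorenstein singularities of the minimal model via Reid's basket formula will be the technical heart.

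For the equality conclusion, irregularity already gives $q(X) \ge 1$, so only the upper bound $q(X) \le 2$ remains. If $q(X) \ge 3$, then $\albdim(X) \ge 2$ and the Pareschi--Popa positivity of $a_{X*}\omega_X$, combined with Riemann--Roch on the canonical model, contributes an extra strictly positive term beyond $\frac{4}{3}p_g(X)$; hence equality in \eqref{eq: Irregular Noether dim 3} is impossible when $q(X) \ge 3$, yielding $1 \le q(X) \le 2$. The subtlest point I expect is extracting \emph{exactly} the constant $\frac{4}{3}$ in case (2), as opposed to some weaker numerical slope — this will require a rigidity analysis identifying precisely when the Fujita decomposition of $a_*\omega_{X/E}$ and the fiberwise surface Noether inequality on $S$ can be simultaneously sharp.
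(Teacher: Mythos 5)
There is a genuine gap, and it sits exactly where the paper locates its main difficulty: case (2) (and the corresponding subcase of (1)) when the general Albanese fibre $F$ is a $(1,2)$-surface and $h^2(X,\CO_X)>0$. Your plan for this case --- Fujita decomposition of $f_*\omega_{X/B}$ plus ``a slope-type inequality for fibered $3$-folds over elliptic curves'' plus the fiberwise surface Noether inequality --- cannot close, because every inequality of that type bounds $\vol(X)$ against $\deg f_*\omega_X$ or $\chi(\omega_X)$, and for $q(X)=1$ one only has $p_g(X)=\chi(\omega_X)+h^2(X,\CO_X)$; the best such bound is $\vol(X)\ge\frac{4}{3}\chi(\omega_X)=\frac{4}{3}p_g(X)-\frac{4}{3}h^2(X,\CO_X)$, which falls short precisely when $h^2(X,\CO_X)>0$ (this is the failure the paper flags in the introduction). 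The paper's actual mechanism is different in kind: Fujita's decomposition forces $f_*\omega_X=\CO_B\oplus\CO_B(L)$ with $e=\deg L=p_g(X)-1$ (Lemma \ref{lem: h2 not 0}), and the new input is \emph{nefness of $K_X-aF$ for some $a>\frac{4}{3}$}, proved via the Harder--Narasimhan filtration of $f_*\CO_X(mK_X)$ and Miyaoka's nefness statement (Lemma \ref{lem: nefness}), Koll\'ar's log-canonical-threshold bound $\mathrm{lct}(F;\Delta)\ge\frac{1}{10}$ on $(1,2)$-surfaces (Proposition \ref{prop: lct}), and a limiting \'etale base-change argument yielding that $K_X-\frac{e}{11}F$ is nef (Proposition \ref{prop: final nefness}); this feeds into Proposition \ref{prop: HZ2} to give $K_X^3\ge\frac{4}{3}(p_g(X)-1)+\frac{e}{11}$, and $p_g(X)\ge 16$ enters to make $\frac{e}{11}\ge\frac{15}{11}>\frac{4}{3}$ --- not, as you suggest, to supply horizontal sections, and Reid's basket formula is never used. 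Relatedly, your premise for case (1) is incorrect: the constant $-\frac{10}{3}$ in \eqref{eq: Noether dim 3} is \emph{not} a singularity correction removable by Gorenstein Riemann--Roch --- the Chen--Chen--Jiang equality is attained by Gorenstein minimal (regular) $3$-folds, so it is the irregularity, not Gorenstein-ness, that must be exploited; in the Gorenstein $(1,2)$-fibre case the paper instead proves $4K_X-3F$ is nef (Proposition \ref{prop: 4K-3F nef}) and finishes using evenness of $K_X^3$ and a degree-three \'etale cover for $p_g(X)\le 3$.

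The ``moreover'' part of your proposal also contains a false step: $q(X)\ge 3$ does \emph{not} imply $\albdim(X)\ge 2$ (an Albanese fibration over a curve of genus $q(X)\ge 3$ has $\albdim(X)=1$). The paper rules out equality for $\albdim(X)=1$, $q(X)\ge 2$ by strict inequalities ($K_X^3>\frac{4}{3}p_g(X)$ or $K_X^3\ge 2p_g(X)$, Proposition \ref{prop: albdim 1 q>1}), and for $\albdim(X)\ge 2$ it runs a case analysis on the dimension of the canonical image using Kawamata extension (inequality \eqref{eq: volume extension}), the Severi inequality $K_X^3\ge 4\chi(\omega_X)$, and Reid's bound on quadrics through the canonical image to control $h^0(2K_X)$; equality is then pinned to $p_g(X)\in\{2,3\}$, where Zhang's theorem $K_X^3\ge 4h^0_a(X,K_X)$ and the Chen--Jiang-type decomposition of $a_*\omega_X$ force $q(X)=2$ and $\albdim X=2$ (Proposition \ref{prop: albdim 2 canonical dim 1 case 2} and the proof of Theorem \ref{thm: q>1}). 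Generic vanishing and Pareschi--Popa continuous global generation, which you invoke for this and for case (1), appear nowhere in the paper, and your sketch does not indicate how they would produce the exact slope $\frac{4}{3}$ against $p_g(X)$ rather than against $\chi(\omega_X)$, which is the crux of the whole problem.
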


Note that minimal models of $3$-folds $X$ with $\vol(X) < \frac{4}{3}p_g(X) \le 20$ form a bounded family \cite[Corollary 2]{Martinelli_Schreieder_Tasin}. Thus Theorem \ref{thm: main1} shows that the Noether inequality \eqref{eq: Irregular Noether dim 3} holds for all minimal irregular $3$-folds of general type except possibly finitely many families.

We would like to remark on the optimality of the Noether inequality \eqref{eq: Irregular Noether dim 3}. Indeed, the authors in \cite{Hu_Zhang2} described the canonical model of all smooth irregular $3$-folds $X$ of general type with $\vol(X) = \frac{4}{3}\chi(\omega_X)$ and showed that $q(X) = 1$ and $h^2(X, \CO_X) = 0$ for such $X$. Thus we deduce that 
$$
\vol(X) = \frac{4}{3} \left(p_g(X) - h^2(X, \CO_X) + q(X) - 1\right) = \frac{4}{3}p_g(X)
$$ 
for such $X$. A notable feature is that the general Albanese fibre of such $X$ is a $(1, 2)$-surface. Here and throughout the paper, a $(1, 2)$-surface always denotes a smooth surface $S$ with $\vol(S) = 1$ and $p_g(S) = 2$. 

Our second theorem shows that ``almost all" irregular $3$-folds attaining the equality in \eqref{eq: Irregular Noether dim 3} are exactly of the above form.

\begin{theorem} \label{thm: main2}
	Let $X$ be a smooth irregular $3$-fold of general type with $\vol(X) = \frac{4}{3}p_g(X)$ and $p_g(X) \ge 16$. Then $\vol(X) = \frac{4}{3}\chi(\omega_X)$. Thus the canonical model of $X$ can be explicitly described as in \cite[Theorem 1.3]{Hu_Zhang2}.
\end{theorem}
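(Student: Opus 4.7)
The goal is to establish $\chi(\omega_X) = p_g(X)$, which via the identity
\[
\chi(\omega_X) = p_g(X) - h^2(X, \CO_X) + q(X) - 1
\]
is equivalent to $h^2(X, \CO_X) = q(X) - 1$. The plan is to first invoke Theorem \ref{thm: main1}, which under the equality hypothesis and $p_g(X) \ge 16$ yields $1 \le q(X) \le 2$; then one needs $h^2(X, \CO_X) = 0$ when $q(X) = 1$ and $h^2(X, \CO_X) = 1$ when $q(X) = 2$. The overall strategy is a two-sided pinch: the sharpened inequality $\vol(X) \ge \frac{4}{3}\chi(\omega_X)$, available under these hypotheses through the methods of \cite{Hu_Zhang2} and of Theorem \ref{thm: main1}, combined with the assumption $\vol(X) = \frac{4}{3}p_g(X)$, forces $\chi(\omega_X) \le p_g(X)$ and thus $h^2(X, \CO_X) \ge q(X) - 1$; the matching upper bound must be extracted from the Albanese geometry case by case.

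Assume first $q(X) = 1$, so the Albanese morphism is a fibration $f: X \to E$ onto an elliptic curve. The equality $\vol(X) = \frac{4}{3}p_g(X)$, combined with the Noether-slope inequality for threefold fibrations over curves, should identify the general fibre $F$ as a $(1,2)$-surface; in particular $q(F) = 0$, so $R^1 f_* \CO_X$ is torsion while $R^2 f_* \CO_X$ has generic rank $2$. A Leray computation using relative duality and $\omega_E \cong \CO_E$ then yields
\[
h^2(X, \CO_X) = h^1(E, f_* \omega_X) = p_g(X) - \deg f_* \omega_X,
\]
the last step by Riemann--Roch on $E$. The rank-$2$ nef Hodge bundle $f_* \omega_X$ admits a Fujita--Kawamata decomposition $A \oplus U$ with $A$ ample and $U$ unitary. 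Exploiting how the slope inequality saturates in the equality regime, the aim is to exclude any trivial summand in $U$, which by Atiyah's classification of vector bundles on $E$ forces $h^1(E, f_* \omega_X) = 0$, i.e.\ $h^2(X, \CO_X) = 0$.

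For $q(X) = 2$, the Albanese image is either a smooth curve $B$ of genus $\ge 2$ (giving a surface fibration $X \to B$) or the full abelian surface $\mathrm{Alb}(X)$ (giving a curve fibration with Albanese generically finite onto a surface). In the former subcase a variant of the $q(X) = 1$ argument over a higher-genus base should give $h^2(X, \CO_X) = 1$, with the correction $2g(B) - 2 \ge 2$ precisely accounting for the shift $q(X) - 1 = 1$. The latter subcase is expected to be excluded by generic vanishing and Hodge-theoretic considerations under $p_g(X) \ge 16$. The principal obstacle throughout is the sharpness step: the bare slope estimate $K_{X/B}^3 \ge \frac{4}{3} \deg f_* \omega_{X/B}$ alone recovers only the lower bound $h^2(X, \CO_X) \ge q(X) - 1$, so one must use the precise structure of the equality case of Theorem \ref{thm: main1} to pin down $f_* \omega_{X/B}$ and obtain the matching upper bound. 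Once $h^2(X, \CO_X) = q(X) - 1$ is established, we have $\vol(X) = \frac{4}{3}\chi(\omega_X)$, and the canonical model is described by \cite[Theorem 1.3]{Hu_Zhang2}.
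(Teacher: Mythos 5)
Your skeleton agrees with the paper at the coarsest level (reduce to computing $h^2(X,\CO_X)$ via Leray and the Fujita decomposition of $f_*\omega_X$), but there is a genuine gap at exactly the step the paper identifies as the main difficulty. In the $q(X)=1$ case with a $(1,2)$-surface fibre, your plan is to exclude a trivial summand of $f_*\omega_X$ by ``exploiting how the slope inequality saturates in the equality regime.'' This cannot work: the Noether--Severi/slope inequality for such fibrations only gives $K_X^3 \ge \frac{4}{3}\chi(\omega_X) = \frac{4}{3}\bigl(p_g(X) - h^2(X,\CO_X)\bigr)$, which is perfectly consistent with $\vol(X) = \frac{4}{3}p_g(X)$ and $h^2(X,\CO_X) > 0$ simultaneously --- there is no pinch. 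The paper's introduction says this explicitly (``the Noether-Severi inequality in \cite{Hu_Zhang2} only provides a weaker Noether inequality''), and the actual argument is an entirely different mechanism: when $h^2(X,\CO_X)>0$, Lemma \ref{lem: h2 not 0} forces $f_*\omega_X = \CO_B \oplus \CO_B(L)$ with $e = \deg L = p_g(X)-1$, and then Section 4 develops new positivity of the pluricanonical pushforwards $f_*\CO_X(mK_X)$ (via Koll\'ar's bound $\mathrm{lct}(F;\Delta)\ge \frac{1}{10}$ for $(1,2)$-surfaces, \'etale base changes, and the iteration $a_{n+1} = \frac{9a_n + e}{20} \to \frac{e}{11}$) to prove $K_X - \frac{e}{11}F$ is nef (Proposition \ref{prop: final nefness}); feeding this into Proposition \ref{prop: HZ2} yields the strict excess $K_X^3 \ge \frac{4}{3}p_g(X) + \frac{1}{33}$ when $p_g(X)\ge 16$, contradicting equality. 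So the paper does not directly rule out the trivial summand; it shows that configuration forces a strictly larger volume. Your proposal assumes away precisely this hard step.

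Your $q(X)=2$ branch is also misdirected. You try to prove $h^2(X,\CO_X) = 1$ there, but under $p_g(X)\ge 16$ the equality case with $q(X)\ge 2$ simply does not occur: the proof of Theorem \ref{thm: q>1} shows $K_X^3 = \frac{4}{3}p_g(X)$ with $q(X)\ge 2$ forces $p_g(X) \in \{2,3\}$, so $p_g(X)\ge 16$ gives $q(X)=1$ outright (this is how the paper's proof of Theorem \ref{thm: main2} starts; note also that $q(X)=2$ with $\vol(X)=\frac{4}{3}\chi(\omega_X)$ would contradict \cite[Theorem 1.3]{Hu_Zhang2}, which shows $q=1$ for all such threefolds). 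Your appeals to generic vanishing and to a higher-genus-base variant are unsubstantiated and, in any case, aimed at the wrong target. Finally, the step identifying $F$ as a $(1,2)$-surface is not a bare slope statement either: in the paper it comes from Theorem \ref{thm: q=1 2}, whose proof is a case analysis on $\dim\Sigma$ showing that a non-$(1,2)$ fibre with $p_g(X)\ge 6$ forces $K_X^3 > \frac{4}{3}p_g(X)$.
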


Interestingly, a similar phenomenon actually holds also in the surface case. Namely, let $S$ be a smooth irregular surface of general type with $p_g(S) \ge 5$. Then $\vol(S) = 2p_g(S)$ if and only if $\vol(S) = 2\chi(\omega_S)$ \cite[Theorem 1.1]{Ciliberto_Lopes_Pardini}.

Results in  \cite{Kobayashi,Chen_Chen_Jiang,Hu_Zhang3} suggest that a $3$-fold $X$ of general type with small ${\vol(X)}/{p_g(X)}$
admits a pencil of $(1, 2)$-surfaces. 
Thus there should be a sharper Noether inequality for irregular $3$-folds of general type without pencils of $(1, 2)$-surfaces. Our third theorem provides a strong evidence that the desired inequality may be just like Debarre's inequality \eqref{eq: Irregular Noether dim 2}.

\begin{theorem} \label{thm: main3}
	Let $X$ be a smooth $3$-fold of general type with $q(X) \ge 2$ whose general Albanese fibre is not a $(1, 2)$-surface. Then we have the following optimal Noether inequality:
	\begin{equation} \label{eq: Irregular Noether dim 3 2}
		\vol(X) \ge 2p_g(X),
	\end{equation}
	provided one of the following conditions holds:
	\begin{itemize}
		\item [(1)] $X$ has a Gorenstein minimal model;
		\item [(2)] $p_g(X) \ne 2$ or $3$.
	\end{itemize}
\end{theorem}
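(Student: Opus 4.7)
\noindent\emph{Plan of proof.} Since $q(X)\ge 2$, the Albanese map $\alb_X\colon X\to\mathrm{Alb}(X)$ is non-constant and its image $Z$ satisfies $\dim Z\in\{1,2,3\}$. The hypothesis that the general Albanese fibre is not a $(1,2)$-surface bites only when $\dim Z=1$ (otherwise the generic fibre of $\alb_X$ has dimension at most $1$ and is automatically not a surface). I would split the proof into these three cases.

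\medskip

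When $\dim Z=3$, $X$ is of maximal Albanese dimension; a Severi-type inequality $\vol(X)\ge 6\chi(\omega_X)$ (Barja, Zhang) combined with generic vanishing bounds on $h^i(X,\CO_X)$ \`a la Chen--Hacon forces $\vol(X)\ge 2p_g(X)$. When $\dim Z=2$, I would Stein-factorize $\alb_X$ as $X\xrightarrow{f}Y\to Z$, with $Y$ a smooth projective surface of general type and of maximal Albanese dimension, and $f$ a fibration whose general fibre is a curve of genus $\ge 2$; then the Severi inequality $K_Y^2\ge 4\chi(\omega_Y)$ on $Y$, combined with a Cornalba--Harris-type slope estimate for $f$, delivers $\vol(X)\ge 2p_g(X)$. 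Neither sub-case uses the non-$(1,2)$ hypothesis.

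\medskip

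When $\dim Z=1$, after Stein factorization we get a fibration $f\colon X\to C$ over a smooth curve $C$ with $g(C)=q(X)\ge 2$ and general fibre $F$ (after a birational modification) a minimal smooth surface of general type which, by hypothesis, is not a $(1,2)$-surface. The plan hinges on a slope inequality of the form
$$K_{X/C}^3\ge 2\deg f_*\omega_{X/C},$$
which should hold because excluding $(1,2)$ fibres raises the optimal slope of the threefold fibration from $4/3$ (as in Chen--Chen--Jiang) up to at least $2$; I would establish it by adapting the slope machinery used in the proof of Theorem~\ref{thm: main1} to the relative setting over $C$. Since $K_X=K_{X/C}+f^*K_C$ and $(f^*K_C)^2=0$, this gives $K_X^3=K_{X/C}^3+3(2q(X)-2)K_F^2$. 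On the cohomological side, using nefness of $f_*\omega_{X/C}$ and Riemann--Roch on $C$,
$$p_g(X)=h^0(C,f_*\omega_{X/C}\otimes\omega_C)\le \deg f_*\omega_{X/C}+p_g(F)(q(X)-1)+\varepsilon,$$
where the correction $\varepsilon$ absorbs the unitary summand in Fujita's decomposition of $f_*\omega_{X/C}$. Combining the two estimates with Noether on $F$ yields the target bound $\vol(X)\ge 2p_g(X)$.

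\medskip

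\noindent\emph{Main obstacle.} The delicate step is the slope inequality $K_{X/C}^3\ge 2\deg f_*\omega_{X/C}$ for threefold fibrations with non-$(1,2)$ general fibre, together with tight control of $\varepsilon$. Two types of pathology can weaken it: contributions from Reid's basket of singularities on the minimal model of $X$, which are removed by the Gorenstein hypothesis~(1); and small-$p_g$ configurations where the unitary part of $f_*\omega_{X/C}$ dominates the ample part, which are excluded by $p_g(X)\ne 2,3$ in~(2). Extending the argument to the non-Gorenstein case with $p_g(X)\in\{2,3\}$ would presumably require a substantially more delicate analysis.
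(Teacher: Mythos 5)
Your sketch for the Albanese-dimension-one case is essentially the paper's own argument (Proposition \ref{prop: albdim 1 q>1}): the slope inequality $K_{X/B}^3 \ge 2\deg f_*\omega_{X/B}$ for fibrations whose general fibre is not a $(1,2)$-surface is precisely \cite[Theorem 1.6]{Hu_Zhang1} --- a cited, unconditional result for minimal terminal $X$, not a ``delicate step'' requiring new slope machinery or Reid-basket control --- and combining it with $K_X^3 = K_{X/B}^3 + 6(q(X)-1)K_F^2$, Riemann--Roch, Fujita's bound $h^1(B, f_*\omega_X) \le \rank f_*\omega_X = p_g(F)$ (your $\varepsilon$), and $3K_F^2 \ge 2p_g(F)$ for non-$(1,2)$ fibres yields $K_X^3 \ge 2p_g(X)$ with \emph{no} Gorenstein or $p_g$ hypothesis. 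Consequently your ``main obstacle'' paragraph misplaces where conditions (1)--(2) are consumed: in the paper they are needed only when $\alb\dim X \ge 2$ and $p_g(X) \in \{2,3\}$, where the general argument gives only $K_X^3 \ge \frac{8}{3}$, $4$ or $\frac{16}{3}$ (Propositions \ref{prop: albdim 2 canonical dim 2}, \ref{prop: albdim 2 canonical dim 1 case 1}, \ref{prop: albdim 2 canonical dim 1 case 2}), and the Gorenstein hypothesis rescues these cases via $\chi(\omega_X) \ge 1$ and the evenness of $K_X^3$ (Proposition \ref{prop: albdim 2 Gorenstein}) --- nothing to do with unitary summands of $f_*\omega_{X/C}$ or baskets entering a slope inequality.

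The genuine gap is in your treatment of Albanese dimension $\ge 2$. For maximal Albanese dimension there is no inequality of the form $p_g(X) \le c\,\chi(\omega_X)$ that would let Severi close the case: for $X = C_1 \times C_2 \times C_3$ with all $g(C_i) = 2$ one has $\chi(\omega_X) = 1$ but $p_g(X) = 8$, so even the optimal Severi inequality $K_X^3 \ge 12\chi(\omega_X)$ (your constant $6$ is not the sharp one) gives only $K_X^3 \ge 12 < 16 = 2p_g(X)$; generic vanishing cannot repair this, since $h^2(X,\CO_X)$ enters $\chi$ with a negative sign and can be large. Similarly, in the case $\dim Z = 2$ your intermediate surface need not be of general type (it can be an abelian surface when the Albanese map is surjective with connected fibres, exactly the situation of Proposition \ref{prop: albdim 2 canonical dim 1 case 2}), and there is no Cornalba--Harris/Xiao slope inequality over a surface base to invoke. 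What the paper actually does for $\alb\dim X \ge 2$ is stratify by the dimension of the canonical image $\Sigma$ rather than of the Albanese image: it bounds $p_g(X)$ from above by $\chi(\omega_X) + \chi(\omega_S)$ for a general member $S \in |M|$, using Kawamata--Viehweg/Koll\'ar vanishing together with Reid's bound on quadrics through $\Sigma$ (Lemma \ref{lem: h^0(2L)}, giving $h^0(2S) \ge 4p_g(X)-6$, resp.\ $3p_g(X)-3$), and then plays off the three inequalities $K_X^3 \ge 4\chi(\omega_X)$ (the Severi inequality \eqref{eq: Severi q>1}), $4K_X^3 \ge K_{S_0}^2$ (Kawamata extension, \eqref{eq: volume extension}), and $K_{S_0}^2 \ge 4\chi(\omega_S)$ (Pardini) or Xiao's slope inequality, with a separate analysis via the decomposition of $a_*\omega_X$ when $p_g(X) = 2$. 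Without a substitute for this upper bound on $p_g(X)$ in terms of $\chi(\omega_X) + \chi(\omega_S)$, your cases $\dim Z \ge 2$ do not close.
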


The optimality of \eqref{eq: Irregular Noether dim 3 2} can be seen from the following example. Take a minimal surface $S$ of general type with $K_S^2 = 2$,  $p_g(S) = 3$ and $q(S) = 0$. Such a surface $S$ exists and has been classified by Horikawa \cite{HorikawaI}. Take a smooth curve $C$ of genus two. Let $X = S \times C$ be the product $3$-fold. Then it is easy to compute that $q(X) = 2$, $p_g(X) = 6$ and $\vol(X) = K_X^3 = 12$. In particular, we have $\vol(X) = 2p_g(X)$.

\subsection{Sketch of the proof} The whole proof of the Noether inequality is subject to the irregularity, and is divided into two main cases. Since the canonical volume and the geometric genus are birational invariants, we may assume that $X$ is a minimal irregular $3$-fold of general type. 

\textbf{Case 1}: $q(X) \ge 2$. In this case, when $\alb\dim X = 1$, we apply the Noether-Severi inequality and the slope inequality in \cite{Hu_Zhang1,Hu_Zhang2} to deduce the inequalities \eqref{eq: Irregular Noether dim 3} and \eqref{eq: Irregular Noether dim 3 2}. When $\alb\dim X \ge 2$, up to a birational modification, we choose a smooth member $S \in \movable|K_X|$ and deduce a relation of the form $\vol(X) \ge \alpha \vol(S)$ from Kawamata's extension theorem (here $\alpha$ depends on the canonical image of $X$). Since $S$ is an irregular surface of general type, we apply the Severi inequality in \cite{Pardini} or the slope inequality in \cite{Xiao} to get another relation of the form $\vol(S) \ge \beta \chi(\omega_S)$ (here $\beta$ depends on the Albanese dimension of $S$). Moreover, by a detailed analysis on the canonical image of $X$, we also prove that $p_g(X)$ is bounded from above by $\chi(\omega_S)$ and $\chi(\omega_X)$. Combining the above three inequalities with the Severi inequality in \cite{Zhang_Clifford}, the inequality \eqref{eq: Irregular Noether dim 3 2} in this case can be proved.

\textbf{Case 2}: $q(X) = 1$. In this case, the Albanese map of $X$ induces a fibration $f: X \to B$ over a genus one curve $B$. Let $F$ be a general fibre of $f$. When $F$ is not a $(1, 2)$-surface, we may slightly modify the proof in Case 1 to get the inequality \eqref{eq: Irregular Noether dim 3}. However, dealing with the case when $F$ is a $(1, 2)$-surface and $h^2(X, \CO_X) > 0$ requires new insight, mainly because the Noether-Severi inequality in \cite{Hu_Zhang2} only provides a weaker Noether inequality. Our new input here is to use the positivity of the direct image sheaf $f_* \CO_X(mK_X)$ to get more positivity of $K_X$. More precisely, we prove that $K_X - aF$ is nef for some $a > \frac{4}{3}$ when $p_g(X) \ge 16$. If $X$ is Gorenstein, we prove a weaker result that $K_X - \frac{3}{4}F$ is nef when $p_g(X) \ge 4$. These two positivity results plus a known estimate in \cite{Hu_Zhang2} help us establish the inequality \eqref{eq: Irregular Noether dim 3} in the corresponding cases.


As has been pointed out by Debarre \cite[\S 6]{Debarre}, Bombieri \cite[Lemma 14]{Bombieri} proved that $\vol(S) \ge 2\chi(\omega_S)$ holds for every irregular surface $S$ of general type. When $q(S) = 1$, we have $p_g(S) = \chi(\omega_S)$ and Debarre's inequality \eqref{eq: Irregular Noether dim 2} follows directly from Bombieri's result. As a dramatic difference, for an irregular $3$-fold $X$ with $q(X) = 1$, we only have $p_g(X) = \chi(\omega_X) + h^2(X, \CO_X)$. Thus the above implication in dimension two fails in dimension three once $h^2(X, \CO_X) > 0$, which is the main difficulty in this paper.

\subsection{Notation and conventions} \label{subsection: notation}
In this paper, we adopt the following notation and conventions.

\subsubsection*{Varieties and divisors} Let $V$ be a normal variety of dimension $d$. We say that $V$ is \emph{minimal}, if $V$ has at worst $\QQ$-factorial terminal singularities and the canonical divisor $K_V$ is nef. The \emph{geometric genus} $p_g(V)$ of $V$ is defined as
$$
p_g(V):=h^0(V, K_V).
$$
For a Weil divisor $L$ on $V$, the \emph{volume} $\vol(L)$ of $L$ is defined as
$$
\vol(L) := \limsup\limits_{n \to \infty} \frac{h^0(V, nL)}{n^d/d!}.
$$
The volume $\vol(K_V)$ is called the \emph{canonical volume} of $V$, and is denoted by $\vol(V)$. If $V$ is birational to a smooth variety $V'$ with $\vol(V') > 0$, we say that $V$ is of general type. Note that if $V$ is minimal, then $\vol(V) = K_V^d$.

For a linear system $\Lambda$ on $V$, we denote by $\movable \Lambda$ and $\baselocus \Lambda$ the movable part and the base locus of $\Lambda$, respectively. If $p_g(V) \ge 2$, the rational map
$$
\phi_{K_V}: V \dasharrow \PP^{p_g(V) - 1}
$$
induced by the linear system $|K_V|$ is called the \emph{canonical map} of $V$, and $\phi_{K_V}(V)$ is called the \emph{canonical image}.


\subsubsection*{Irregular varieties} Let $V$ be a normal variety with at worst rational singularities. We say that $V$ is \emph{irregular}, if $q(V) := h^1(V, \CO_V) > 0$. Note that $V$ has a well-defined \emph{Albanese map}
$$
a: V \to \mathrm{Alb}(V),
$$
where $A:=\mathrm{Alb}(V)$ is an abelian variety referred as the Albanese variety of $V$. The number $\dim a(V)$, denoted by $\alb\dim V$, is called the \emph{Albanese dimension} of $V$. Let 
$$
V \stackrel{f}\rightarrow W \stackrel{g}\rightarrow A
$$ 
be the Stein factorization of $a$. Then $f$ is called the \emph{Albanese fibration} of $V$, and a fibre of $f$ is called an \emph{Albanese fibre}. If $\phi: V' \to V$ is a resolution of singularities of $V$ and $a': V' \to \mathrm{Alb}(V')$ is the Albanese map of $V'$, then $\mathrm{Alb}(V') \cong \mathrm{Alb}(V)$ and $a' = a \circ \phi$.


\subsection*{Acknowledgments}
Both authors would like to thank Professors Jungkai Alfred Chen and Zhi Jiang for their interests in this paper.

Y.H. is supported by National Key Research and Development Program of China \#2023YFA1010600 and the National Natural Science Foundation of China (Grant No. 12201397). T.Z. is supported by the National Natural Science Foundation of China (Grant No. 12071139), Science and Technology Commission of Shanghai Municipality (No. 22JC1400700, No. 22DZ2229014), and Fundamental Research Funds for the Central Universities.

\section{Preliminaries}

In this section, we list some preliminary results used in the later proof.

\subsection{Linear systems on $3$-folds}
Let $V$ be a normal $3$-fold. Let $L$ be a Cartier divisor on $V$ with $h^0(V, L) \ge 2$. Let
$$
\phi_L: V \dasharrow \PP^{h^0(V, L) - 1}
$$
be the rational map induced by the linear system $|L|$. Denote $\Sigma_L = \phi_L(V)$.

We first recall a lemma due to Reid.

\begin{lemma} \cite[Corollaries (i)]{Reid_2} \label{lem: Reid90}
	Let $W \subseteq \PP^N$ be an irreducible and non-degenerate variety of dimension $w$. Then
	$$
	h^0\left(\PP^N, \CI_W(2)\right) \le \binom{N-w+2}{2} - \min \{\deg W, 2(N-w) + 1\}.
	$$
\end{lemma}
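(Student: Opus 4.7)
The plan is to induct on the dimension $w$, reducing to the zero-dimensional case by successive general hyperplane sections. It is convenient to slightly strengthen the statement and prove the same bound for every non-degenerate pure-dimensional closed subscheme $W \subset \PP^N$ (not merely irreducible ones); the reducible case is inevitable when $w = 1$ and one cuts by a hyperplane, so one might as well allow it throughout.

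\emph{Inductive step.} Take a general hyperplane $H \subset \PP^N$ and set $W' := W \cap H$. A Bertini-type argument, together with a direct dimension count to rule out $W'$ lying in a codimension-two linear subspace of $\PP^N$, shows that $W'$ is non-degenerate of pure dimension $w-1$ and degree $\deg W$ in $H \cong \PP^{N-1}$. The standard short exact sequence
$$
0 \to \CI_W(1) \to \CI_W(2) \to \CI_{W'/H}(2) \to 0,
$$
together with the vanishing $h^0(\PP^N, \CI_W(1)) = 0$ coming from non-degeneracy, yields
$$
h^0(\PP^N, \CI_W(2)) \le h^0\bigl(H,\, \CI_{W'/H}(2)\bigr).
$$
Since $(N-1)-(w-1) = N-w$ and $\deg W' = \deg W$, the inductive hypothesis in $H$ with parameter $w-1$ delivers precisely the desired bound.

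\emph{Base case $w = 0$.} I must show that a non-degenerate zero-dimensional subscheme $Z \subset \PP^M$ of degree $d$ imposes at least $\min(d, 2M+1)$ conditions on the space of quadrics. For $d \le 2M+1$, I induct on $d$: given a closed point $p$ of $Z$, non-degeneracy lets me partition $Z \setminus \{p\}$ into two subsets each spanning a hyperplane that avoids $p$, so that the union of those hyperplanes is a quadric separating $p$ from $Z \setminus \{p\}$. For $d > 2M+1$, I extract a non-degenerate sub-scheme of $Z$ of degree exactly $2M+1$ and apply the previous case. Because the iterated hyperplane sections used in the induction are general, the resulting $Z$ is automatically reduced, so no complication from a non-reduced structure arises.

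The main obstacle is the base case, specifically the partition-into-two-hyperplanes construction: one must arrange simultaneously that both halves of $Z \setminus \{p\}$ span hyperplanes and that both of those hyperplanes avoid $p$. This is the classical Castelnuovo trick, and checking that it can always be carried out starting from mere non-degeneracy (rather than the stronger assumption of uniform or linearly general position) is the technical heart of the proof.
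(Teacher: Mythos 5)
Your reduction to points by general hyperplane sections is the standard route (the paper does not prove this lemma; it quotes Reid, whose argument has exactly this shape), but your base case is false as stated, and this is fatal. Mere non-degeneracy of a finite scheme $Z \subset \PP^M$ does \emph{not} guarantee that $Z$ imposes $\min(\deg Z, 2M+1)$ conditions on quadrics. Concretely, take $Z \subset \PP^2$ to be four collinear points plus one point off the line: $Z$ is non-degenerate of degree $5$, but every conic vanishing on three collinear points contains the whole line, so $Z$ imposes only $4$ conditions on conics instead of $\min(5,5)=5$ (the quadrics through $Z$ form the pencil of line-pairs $L \cup L'$ with $L'$ through the fifth point, so $h^0(\PP^2,\CI_Z(2)) = 2$, not $1$). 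Similarly, five collinear points plus two general points in $\PP^3$ impose at most $5 < 7$ conditions. This shows that the step you yourself flagged as the technical heart --- running the Castelnuovo two-hyperplane trick from non-degeneracy alone --- cannot be carried out: when $Z\setminus\{p\}$ contains many collinear points, every hyperplane through two of them contains them all (possibly including $p$), and no partition into two spanning halves avoiding $p$ exists. The trick genuinely requires \emph{linearly general position} (every subset of $M+1$ points spans $\PP^M$), which is strictly stronger than non-degeneracy.

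The proposed strengthening to arbitrary pure-dimensional non-degenerate subschemes breaks the inductive step as well: for $W$ the union of two disjoint lines in $\PP^3$, which is non-degenerate of pure dimension $1$, the general hyperplane section is two points in $\PP^2$, hence degenerate, so your claim that $W' = W \cap H$ stays non-degenerate fails for reducible $W$. Irreducibility is precisely what rescues the argument and cannot be discarded: by Bertini (characteristic $0$), general hyperplane sections of an irreducible non-degenerate variety of dimension $\ge 2$ remain irreducible and non-degenerate, so one descends to an irreducible non-degenerate curve; then the general hyperplane section of such a curve is a set of $\deg W$ points in linearly general position --- this is the general position theorem (uniform position, Harris), not a formal consequence of non-degeneracy --- and only then does Castelnuovo's lemma give the count $\min\{\deg W, 2(N-w)+1\}$. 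With the base case restated under linear general position and the reduction keeping irreducibility until the final section, your outline becomes the correct (and essentially Reid's) proof; as written, however, both the base case and the generalized inductive step contain genuine counterexamples.
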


\begin{lemma} \label{lem: h^0(2L)}
	If $\dim \Sigma_L = 3$, then 
	\begin{align*}
		h^0(V, 2L) & \ge 3h^0(V, L) - 3 + \min\{\deg \Sigma_L, 2h^0(V, L) - 7\} \ge 4h^0(V, L) - 6.
	\end{align*}
	If $\dim \Sigma_L = 2$, then
	\begin{align*}
		h^0(V, 2L) \ge 2h^0(V, L) - 1 + \min\{\deg \Sigma_L, 2h^0(V, L) - 5\} \ge 3h^0(V, L) - 3.
	\end{align*}
\end{lemma}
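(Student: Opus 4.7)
The idea is to compare the linear system $|2L|$ on $V$ with the system of quadrics on $\mathbb{P}^N$, where $N := h^0(V,L)-1$, using the rational map $\phi_L : V \dashrightarrow \Sigma_L \subseteq \mathbb{P}^N$ and then invoke Reid's bound (Lemma \ref{lem: Reid90}). Concretely, first I would pass to a resolution $\pi : \tilde V \to V$ such that the movable part $M$ of $|\pi^*L|$ is base-point-free. Writing $\pi^*L = M + F$ with $F$ the fixed part, one gets a morphism $\varphi : \tilde V \to \mathbb{P}^N$ with $\varphi^*\mathcal{O}_{\mathbb{P}^N}(1) = \mathcal{O}_{\tilde V}(M)$ and image $\Sigma_L$, and the pullback of any quadric gives a section of $2M \le 2\pi^* L$, so
$$
h^0(V, 2L) \ge h^0(\tilde V, 2M) \ge \dim \mathrm{Im}\bigl(\varphi^* : H^0(\mathbb{P}^N, \mathcal{O}(2)) \to H^0(\tilde V, 2M)\bigr).
$$
Since $\ker \varphi^*$ consists precisely of the quadrics vanishing on $\Sigma_L$, we obtain the key inequality
$$
h^0(V, 2L) \ \ge\ \binom{N+2}{2} - h^0\bigl(\mathbb{P}^N, \mathcal{I}_{\Sigma_L}(2)\bigr).
$$

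Next I would apply Lemma \ref{lem: Reid90} to $\Sigma_L \subset \mathbb{P}^N$, which is non-degenerate because $\phi_L$ is defined by the \emph{full} linear system $|L|$. With $w = \dim \Sigma_L$, Reid's lemma gives
$$
h^0\bigl(\mathbb{P}^N,\mathcal{I}_{\Sigma_L}(2)\bigr) \le \binom{N-w+2}{2} - \min\bigl\{\deg\Sigma_L,\ 2(N-w)+1\bigr\}.
$$
Substituting into the previous display and computing the two binomial coefficient differences yields
$$
\binom{N+2}{2} - \binom{N-1}{2} = 3N \quad (\text{case } w=3), \qquad \binom{N+2}{2} - \binom{N}{2} = 2N+1 \quad (\text{case } w=2),
$$
which, after replacing $N$ by $h^0(V,L)-1$, gives exactly the first (sharper) inequality in each of the two cases of the lemma.

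For the second (weaker) bound in each case I would invoke the classical minimal degree estimate: an irreducible non-degenerate variety of dimension $w$ in $\mathbb{P}^N$ has $\deg \Sigma_L \ge N - w + 1$. This forces $\deg \Sigma_L \ge N-2$ in the threefold case and $\deg \Sigma_L \ge N-1$ in the surface case, and one checks $2N-5 \ge N-2$ (resp.\ $2N-3 \ge N-1$) whenever $N \ge 3$ (resp.\ $N \ge 2$), so the minimum in Reid's bound is at least the degree floor. Plugging in then produces $4h^0(V,L)-6$ and $3h^0(V,L)-3$ respectively; small values of $h^0(V,L)$ are either vacuous or trivial to check directly.

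The only subtle point is the reduction to the base-point-free case and the compatibility $h^0(V, 2L) \ge h^0(\tilde V, 2M)$: one has to argue that sections of $2M$ on $\tilde V$ lift to sections of $2L$ on $V$ via the fixed part $F \ge 0$, i.e.\ $h^0(\tilde V, 2M) \le h^0(\tilde V, 2\pi^* L) = h^0(V, 2L)$. This is standard but worth spelling out. Beyond that, the argument is a direct cohomological computation, and I do not expect any substantive obstacle.
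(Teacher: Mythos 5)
Your proposal is correct and follows essentially the same route as the paper: both pass to a base-point-free movable part, bound $h^0(V,2L)$ below by the quadrics restricted to the non-degenerate image $\Sigma_L$ (your image-of-$\varphi^*$ formulation and the paper's $h^0(V,2L)\ge h^0(\Sigma_L,\CO_{\Sigma_L}(2))$ both reduce to $\binom{N+2}{2}-h^0(\PP^N,\CI_{\Sigma_L}(2))$), apply Lemma \ref{lem: Reid90}, and finish with the minimal-degree bound $\deg\Sigma_L\ge N-w+1$; your binomial computations $3N$ and $2N+1$ match the paper's $3h^0(V,L)-3$ and $2h^0(V,L)-1$ exactly. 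No gaps.
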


\begin{proof}
	Up to a birational modification, we may assume that $V$ is smooth and $|L|$ is base point free. Write $N = h^0(V, L) - 1$. By the assumption, $\Sigma_L \subseteq \PP^N$ is non-degenerate.
	
	If $\dim \Sigma_L = 3$, then $h^0(V, L) \ge 4$ and $\deg \Sigma_L \ge N-2 = h^0(V, L) - 3$. By Lemma \ref{lem: Reid90}, we deduce that
	\begin{align*}
		h^0 \left(\Sigma_L, \CO_{\Sigma_L}(2)\right) & \ge h^0\left(\CO_{\PP^N}, \CO_{\PP^N}(2)\right) - h^0\left(\PP^N, \CI_{\Sigma_L}(2)\right) \\
		& \ge \binom{N+2}{2} - \binom{N-1}{2} + \min\{\deg \Sigma_L, 2N - 5\} \\
		& = 3h^0(V, L) - 3 + \min\{\deg \Sigma_L, 2h^0(V, L) - 7\} \\
		& \ge 4h^0(V, L) - 6.
	\end{align*}
    If $\dim \Sigma_L = 2$, then $h^0(V, L) \ge 3$ and $\deg \Sigma_L \ge N - 1 = h^0(V, L) - 2$. By a similar argument via Lemma \ref{lem: Reid90}, we deduce that
    $$
    h^0 \left(\Sigma_L, \CO_{\Sigma_L}(2)\right) \ge 2h^0(V, L) - 1 + \min\{\deg \Sigma_L, 2h^0(V, L) - 5\}
    \ge 3h^0(V, L) - 3.
    $$
    Note that in either case, we have $h^0(V, 2L) \ge h^0 \left(\Sigma_L, \CO_{\Sigma_L}(2)\right)$. Thus the proof is completed.
\end{proof}

\begin{lemma} \label{lem: extension}
	Suppose that $V$ is minimal of general type and $\pi: V' \to V$ be a resolution of singularities of $V$. Let $S$ be a semi-ample divisor on $V'$ which itself is a smooth surface of general type. If $\lambda\pi^*K_V - S$ is $\QQ$-linear equivalent to an effective $\QQ$-divisor for some $\lambda > 0$, then 
	$$
	\lambda(1 + \lambda)^2 K_V^3 \ge K_{S_0}^2,
	$$
	where $\sigma: S \to S_0$ is the contraction onto the minimal model $S_0$ of $S$.
\end{lemma}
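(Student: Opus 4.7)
The plan is to reduce the inequality to a self-intersection estimate on $S$ and then exploit the orthogonality $\pi^*K_V|_S \cdot E_\pi|_S = 0$ coming from the terminal (hence isolated) singularities of $V$.

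First I would write $\lambda\pi^*K_V \sim_{\QQ} S + D_0$ with $D_0$ an effective $\QQ$-divisor, replacing $\pi$ by a sufficiently high log resolution so that the supports of $S$, $D_0$ and the $\pi$-exceptional divisor $E_\pi$ (where $K_{V'} = \pi^*K_V + E_\pi$ with $E_\pi \ge 0$) are in general position. Since $\pi^*K_V$ is nef and $D_0$ is effective,
\[
\lambda(1+\lambda)^2 K_V^3 \,=\, (S+D_0) \cdot \bigl((1+\lambda)\pi^*K_V\bigr)^2 \,\ge\, \bigl((1+\lambda)\pi^*K_V|_S\bigr)^2,
\]
so it suffices to prove the surface-level bound $N^2 \ge K_{S_0}^2$, where $N := (1+\lambda)\pi^*K_V|_S$ is nef on $S$.

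Adjunction combined with the above relation yields $N \sim_{\QQ} K_S + (D_0 - E_\pi)|_S$. The crucial identity $N \cdot E_\pi|_S = 0$ holds because terminal $3$-fold singularities are isolated, so $\pi_*(S \cdot E_\pi)$ is a $1$-cycle supported on finitely many points, and by the projection formula $\pi^*K_V \cdot S \cdot E_\pi = K_V \cdot \pi_*(S \cdot E_\pi) = 0$. Writing $K_S = \sigma^*K_{S_0} + E_S$ with $E_S \ge 0$ exceptional and exploiting $N \cdot E_\pi|_S = 0$, one gets
\[
N^2 \,=\, N \cdot (K_S + D_0|_S) \,\ge\, N \cdot K_S \,\ge\, N \cdot \sigma^*K_{S_0}
\]
by chaining nef-times-effective estimates.

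The remaining step $N \cdot \sigma^*K_{S_0} \ge K_{S_0}^2$ is where the real work lies. Expanding $N$ and using $N \cdot E_\pi|_S = 0$ to rewrite $\sigma^*K_{S_0} \cdot E_\pi|_S = E_\pi|_S^2 - D_0|_S \cdot E_\pi|_S - E_S \cdot E_\pi|_S$ gives
\[
N \cdot \sigma^*K_{S_0} \,=\, K_{S_0}^2 + D_0|_S \cdot \sigma^*K_{S_0} - E_\pi|_S^2 + D_0|_S \cdot E_\pi|_S + E_S \cdot E_\pi|_S.
\]
Here $D_0|_S \cdot \sigma^*K_{S_0} \ge 0$ is nef-times-effective, and $-E_\pi|_S^2 \ge 0$ follows from the Hodge index theorem applied to the nef and big divisor $\pi^*K_V|_S$: its null locus contains the components of $E_\pi|_S$, which therefore form a negative-definite configuration. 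The general-position choice handles $D_0|_S \cdot E_\pi|_S \ge 0$, and the sign of $E_S \cdot E_\pi|_S$---the main remaining subtlety---is controlled by analyzing the constraint $N \cdot C = 0$ for components $C$ lying simultaneously in $E_S$ and $E_\pi|_S$, which pins down their numerical type. Assembling these yields $N \cdot \sigma^*K_{S_0} \ge K_{S_0}^2$, completing the proof.
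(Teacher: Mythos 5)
Your opening reduction, $\lambda(1+\lambda)^2 K_V^3 \ge N^2$ with $N = (1+\lambda)\pi^*K_V|_S$, is exactly the paper's first step. But everything after that is where the lemma's real content lies, and there your argument has a genuine gap. The inequality you are trying to establish, $N \cdot \sigma^*K_{S_0} \ge K_{S_0}^2$, is equivalent (since $K_S \cdot \sigma^*K_{S_0} = K_{S_0}^2$) to $\sigma^*K_{S_0} \cdot (D_0 - E_\pi)|_S \ge 0$, and this is not a formal consequence of adjunction and the Hodge index theorem: it is precisely the pseudo-effectivity of $(1+\lambda)\pi^*K_V|_S - \sigma^*K_{S_0}$, which the paper imports wholesale from \cite[Corollary 2.3]{Chen_Chen_Jiang}, a consequence of Kawamata's extension theorem for pluricanonical forms. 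Given that input, the paper's proof is two lines ($N$ nef and $N - \sigma^*K_{S_0}$ pseudo-effective give $N^2 \ge N\cdot\sigma^*K_{S_0} \ge (\sigma^*K_{S_0})^2$); your proposal is an attempt to replace this extension-theoretic input by pure intersection theory, and the replacement does not close.

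Concretely, three steps fail. First, your ``crucial identity'' $N \cdot E_\pi|_S = 0$ needs $\pi$ to be an isomorphism over the smooth locus of $V$, so that $E_\pi$ maps to the finitely many terminal points; but you simultaneously replace $\pi$ by a higher log resolution to get ``general position,'' after which $E_\pi$ acquires components dominating curves in $V$ and you only get $N \cdot E_\pi|_S \ge 0$ --- which, fed into your substitution for $\sigma^*K_{S_0}\cdot E_\pi|_S$, produces a lower bound where you need an upper bound. (In the paper the lemma is in fact applied with $\pi$ also resolving the base locus of $|K_X|$, so exceptional divisors over the smooth locus are unavoidable.) Second, ``general position'' is not something you can arrange: $D_0$ and $E_\pi$ are fixed divisors determined by $\lambda\pi^*K_V - S$ and by $\pi$, and they typically share components, so $D_0|_S \cdot E_\pi|_S \ge 0$ is unjustified; likewise the term $E_S \cdot E_\pi|_S$, which you acknowledge as ``the main remaining subtlety,'' can be negative when $\sigma$-exceptional and $\pi$-contracted curves overlap, and you offer no actual mechanism to control it --- a $\pi$-contracted curve on $S$ need not be $\sigma$-exceptional, and can have $\sigma^*K_{S_0}\cdot C > 0$, which is exactly the configuration your bookkeeping cannot absorb. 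Third, your Hodge-index step needs $\pi^*K_V|_S$ to be big, which is not assumed in the lemma and in some of the paper's applications (e.g.\ when the canonical image is a curve and $S$ is a fibre, Proposition \ref{prop: albdim 2 canonical dim 1 case 1}) is only known a posteriori, as a consequence of the very inequality being proved. The fix is not to patch these estimates but to invoke Kawamata's extension theorem as the paper does.
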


\begin{proof}
	Since $K_V$ is semi-ample, by \cite[Corollary 2.3]{Chen_Chen_Jiang} which was deduced from Kawamata's extension theorem, the $\QQ$-divisor $(1 + \lambda)\pi^*K_V|_S - \sigma^*K_{S_0}$ is pseudo-effective. Thus
	$$
	\lambda(1 + \lambda)^2 K_V^3 \ge (1 + \lambda)^2 \left((\pi^*K_V)^2 \cdot S \right) \ge (\sigma^*K_{S_0})^2 = K_{S_0}^2.
	$$
	The proof is completed.
\end{proof}

\subsection{Positivity of divisors on fibred varieties} 
Let 
$$
f: V \to B
$$ 
be a fibration from an $n$-dimensional normal variety $V$ onto a smooth curve $B$. Denote by $F$ a general fibre of $f$. Let $L$ be a $\QQ$-Cartier Weil divisor on $V$ such that $L$ is $f$-nef and that $L|_F$ is Cartier. Suppose that $f_* \CO_V(L)$ is nonzero. Let 
$$
0 = \CE_0 \subsetneq \CE_1 \subsetneq \cdots \subsetneq \CE_l = f_* \CO_V(L)
$$
be the Harder-Narasimhan filtration of $f_* \CO_V(L)$. For any $1 \le i \le l$, set 
$$
r_i= \rank \CE_i, \quad  \mu_i
=\frac{\deg(\CE_i/\CE_{i-1})}{\rank (\CE_i / \CE_{i-1})}.
$$
Then we have $\mu_1 > \mu_2 > \cdots > \mu_l$.

Consider the evaluation morphism 
$$
\ev: f^*f_*\CO_V(L) \to \CO_V(L).
$$ 
It induces a rational map
$$
\psi: V\dashrightarrow \PP_B(f_*\CO_{V}(L)).
$$
Note that $\psi$ is actually defined by the linear system $|L + tF|$ for a sufficiently large $t \in \mathbb{N}$. Denote $\Delta = \mathrm{Sing}(V) \cup \mathrm{Supp}(\CO_V(L)/\mathrm{Im}(\ev))$.


\begin{lemma} \label{lem: nefness}
	Let $C$ be an integral curve on $V$ not contained in the $f$-horizontal part of $\Delta$. Then 
	$
	\left((L - \mu_l F) \cdot C\right) \ge 0.
	$
\end{lemma}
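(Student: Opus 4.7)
The plan is to split into two cases according to whether $C$ is $f$-vertical or $f$-horizontal, since $\mu_l F$ has zero intersection with any vertical curve. If $f(C)$ is a point, then $F \cdot C = 0$ and the $f$-nefness of $L$ immediately gives $L \cdot C \ge 0$; the hypothesis on $\Delta$ is not even used in this case.

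The substantive case is $f(C) = B$. Here I would exploit that $C \not\subset \Delta$: a horizontal curve cannot lie in any vertical component of $\Delta$, and the horizontal part of $\Delta$ is excluded by hypothesis. Consequently the generic point of $C$ is a smooth point of $V$ at which $\CO_V(L)$ is locally free and at which $\ev\colon f^*\CE \to \CO_V(L)$ is surjective. Let $\nu\colon \tilde C \to C$ be the normalization, $\iota\colon C \hookrightarrow V$ the inclusion, and $g := f \circ \iota \circ \nu\colon \tilde C \to B$, a finite surjective morphism of some degree $d = F \cdot C$. Pulling $\ev$ back along $\iota \circ \nu$ produces a nonzero morphism $\alpha\colon g^*\CE \to \CL$ on $\tilde C$, where $\CL$ is the line bundle obtained as the double dual of the pullback of $\CO_V(L)$ (so that $\deg \CL = L \cdot C$). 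The image $\mathcal{M} := \mathrm{Im}(\alpha) \subset \CL$ is then a line bundle quotient of $g^*\CE$ satisfying $\deg \mathcal{M} \le L \cdot C$.

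To bound $\deg \mathcal{M}$ from below, I would pull back the Harder--Narasimhan filtration of $\CE$ by $g$. The graded pieces of $g^*\CE_\bullet$ are $g^*(\CE_i/\CE_{i-1})$, each semistable of slope $d\mu_i$, since finite pullbacks of semistable bundles on smooth curves preserve semistability in characteristic zero and multiply slopes by $d$. A short induction on the filtration length $l$ then shows that every line bundle quotient of $g^*\CE$ has degree at least $d\mu_l$: either the quotient factors through the bottom graded piece $g^*(\CE_l/\CE_{l-1})$, in which case its semistability forces degree $\ge d\mu_l$; or $g^*\CE_{l-1}$ maps nontrivially to the quotient, in which case the inductive hypothesis yields a strictly larger lower bound. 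Combining the two inequalities gives $L \cdot C \ge \deg \mathcal{M} \ge d\mu_l = \mu_l (F \cdot C)$, which is the desired conclusion.

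The only non-formal input is the preservation of Harder--Narasimhan slopes under the finite pullback $g^*$, for which characteristic zero is essential; without it a Frobenius factor would spoil the slope comparison. Apart from that, the argument is bookkeeping: identifying the pulled-back evaluation morphism as a homomorphism of coherent sheaves on the smooth curve $\tilde C$ and using that $\CO_V(L)$ is locally free at the generic point of $C$, both of which are precisely enabled by the hypothesis $C \not\subset \Delta$.
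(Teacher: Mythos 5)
Your proof is correct, but it takes a genuinely different route from the paper's. For a horizontal $C$, the paper does not work on $C$ directly: it passes to a resolution $\pi: V' \to V$ that is an isomorphism over $V \setminus \Delta$, writes $\pi^*(L+tF) = M + Z$ with $|M|$ the free moving part, notes $\pi(Z) \subseteq \Delta$ so the strict transform $C'$ avoids $\supp Z$, and then invokes Miyaoka's result that the $\QQ$-divisor $\pi^*(L - \mu_l F) - Z$ is nef, concluding via $\left((L-\mu_l F)\cdot C\right) = \left(\pi^*(L-\mu_l F)\cdot C'\right) \ge (Z \cdot C') \ge 0$. You instead reprove the underlying slope estimate from scratch on the normalization $\tilde{C}$: pulling back the evaluation map to get a nonzero $\alpha: g^*\CE \to \CL$, and bounding every line-bundle quotient of $g^*\CE$ below by $d\mu_l$ using preservation of Harder--Narasimhan slopes under finite pullback in characteristic zero --- which is essentially the proof of Miyaoka's lemma itself, specialized to one curve. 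Your version buys self-containedness (no birational modification, no external citation) and makes transparent exactly where $C \not\subset \Delta$ and characteristic zero enter; the paper's version buys brevity and a statement that is ``globally'' nef away from $Z$ rather than curve-by-curve. One small correction to your bookkeeping: since $C$ is only barred from being \emph{contained} in $\Delta$, it may still pass through $\mathrm{Sing}(V)$ at points where the $\QQ$-Cartier Weil divisor $L$ fails to be Cartier, and there the equality $\deg \CL = (L \cdot C)$ can fail; what is true, and all you need, is $\deg \CL \le (L \cdot C)$, obtained by taking $m$ with $mL$ Cartier and comparing the generically isomorphic map $\CL^{\otimes m} \to \left((\iota\nu)^*\CO_V(mL)\right)/\mathrm{tors}$, whose target has degree exactly $m(L \cdot C)$. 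Since the inequality points in the favorable direction, the chain $d\mu_l \le \deg \mathcal{M} \le \deg \CL \le (L\cdot C)$ closes the argument as you intended.
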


\begin{proof}
	Suppose that $C$ is $f$-vertical. Since $L$ is $f$-nef, we have
	$$
	\left((L - \mu_l F) \cdot C\right) = (L \cdot C) \ge 0.
	$$
	Suppose that $C$ is $f$-horizontal now. Choose a birational modification $\pi: V' \to V$ such that $V'$ is smooth, $\pi$ is isomorphic over $V \backslash \Delta$, and $|M| = \movable|\rounddown{\pi^*(L+tF)}|$ is base point free for a sufficiently large $t \in \mathbb{N}$.
	Write 
	$$
	\pi^*(L+tF) = M + Z.
	$$
	Then we have $\pi(Z) \subseteq \Delta$. Since $C$ is not contained in $\Delta$, the strict transform $C'$ of $C$ under $\pi$ is not contained in $\supp(Z)$. Now by \cite[Corollary 3.5]{Miyaoka}, the $\QQ$-divisor $\pi^*(L - \mu_l F) - Z$ is nef. It follows that
	$$
	\left((L - \mu_l F) \cdot C\right) = \left(\pi^*(L - \mu_l F) \cdot C'\right) \ge (Z\cdot C') \ge 0.
	$$
	Thus the proof is completed.
\end{proof}

\subsection{Vector bundles on elliptic curves}
Let $E$ be an elliptic curve. For a vector bundle $\CE$ on $E$, we may write 
$$
\CE = \bigoplus_{i=1}^n \CE_i,
$$
where each $\CE_i$ is indecomposable, i.e., not a direct sum of subbundles. By \cite[Page 3, Fact]{Tu}, every $\CE_i$ is semistable. Thus it is easy to obtain the Harder-Narasimhan filtration of $\CE$. Indeed, we may further assume that 
$$
\mu(\CE_1) = \cdots = \mu(\CE_{i_1}) > \cdots > \mu(\CE_{i_{k-1}+1}) = \cdots = \mu(\CE_{i_k}),
$$
where $\mu(\CE_i) = \deg \CE_i / \rank \CE_i$ for each $i$ and $i_k = n$. Then the Harder-Narasimhan filtration of $\CE$ is just
$$
0 = \CF_0 \subsetneq \CF_1 \subsetneq \cdots \subsetneq \CF_k = \CE,
$$
where $\CF_l = \CE_1 \oplus \cdots \oplus \CE_{i_l}$ for any $1 \le l \le k$. 

\begin{defi}
	For a vector bundle $\CE$ on $E$ as above, the minimal (resp. maximal) slope $\mu_{\min}(\CE)$ (resp. $\mu_{\max}(\CE)$) of $\CE$ is defined to be $\min \{\mu(\CE_i)| 1 \le i \le n\}$ (resp. $\max \{\mu(\CE_i)| 1 \le i \le n\}$).
\end{defi}


\begin{lemma} \label{lem: vector bundle tensor}
	Let $\CV_1$ and $\CV_2$ be two vector bundles on $E$. Then
	$$
	\mu_{\min}(\CV_1 \otimes \CV_2) = \mu_{\min}(\CV_1) + \mu_{\min}(\CV_2).
	$$
\end{lemma}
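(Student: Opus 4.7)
The plan is to reduce to the indecomposable summands and invoke the fact, special to elliptic curves, that the tensor product of semistable bundles is again semistable.

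First I would use the paragraph preceding the lemma to write $\CV_1 = \bigoplus_i \CE_i$ and $\CV_2 = \bigoplus_j \CF_j$ with each $\CE_i, \CF_j$ indecomposable; by Atiyah's theorem (cited there), each summand is semistable of slope $\mu(\CE_i)$ respectively $\mu(\CF_j)$. Then I would distribute the tensor product:
$$
\CV_1 \otimes \CV_2 \;=\; \bigoplus_{i,j} \CE_i \otimes \CF_j.
$$
By the classical result that on an elliptic curve the tensor product of two semistable bundles is semistable (this can be deduced either from Atiyah's classification of vector bundles on elliptic curves or, more generally, from the Narasimhan--Seshadri correspondence over any curve of genus one combined with the fact that the tensor product of unitary representations is unitary), each factor $\CE_i \otimes \CF_j$ is semistable of slope $\mu(\CE_i) + \mu(\CF_j)$.

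Next I would use the elementary observation that if a semistable bundle $\CE$ of slope $\mu$ splits as $\CA \oplus \CB$, then both summands have slope $\mu$: semistability forces $\mu(\CA), \mu(\CB) \le \mu$, but their weighted average equals $\mu$. Applying this to the indecomposable decomposition of each $\CE_i \otimes \CF_j$, every indecomposable summand of $\CE_i \otimes \CF_j$ has slope exactly $\mu(\CE_i) + \mu(\CF_j)$. Consequently the set of slopes appearing in the indecomposable decomposition of $\CV_1 \otimes \CV_2$ is precisely
$$
\bigl\{\,\mu(\CE_i) + \mu(\CF_j) \,\bigm|\, \text{all } i,j\,\bigr\}.
$$

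Finally, taking the minimum gives
$$
\mu_{\min}(\CV_1 \otimes \CV_2) \;=\; \min_{i,j}\bigl(\mu(\CE_i) + \mu(\CF_j)\bigr) \;=\; \mu_{\min}(\CV_1) + \mu_{\min}(\CV_2),
$$
as desired. The only genuinely nontrivial input is the preservation of semistability under tensor product on an elliptic curve; everything else is bookkeeping with the indecomposable decomposition. I expect this to be cited rather than reproved, so the main care is in phrasing the reduction cleanly.
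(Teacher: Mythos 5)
Your proof is correct, but it takes a different route from the paper: the paper's entire proof is a citation of \cite[Page 29]{Huybrechts\_Lehn}, where the additivity $\mu_{\min}(\CV_1 \otimes \CV_2) = \mu_{\min}(\CV_1) + \mu_{\min}(\CV_2)$ is established for arbitrary smooth projective curves in characteristic zero via the Harder--Narasimhan filtration, the nontrivial input being that the tensor product of semistable bundles is semistable. You instead exploit the elliptic-curve-specific structure already set up in the paragraph preceding the lemma (every indecomposable bundle on $E$ is semistable, so $\CV_1$ and $\CV_2$ split into semistable summands), distribute the tensor product, and reduce to the elementary observation that every direct summand of a semistable bundle has the same slope. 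This is a clean reduction that avoids the HN machinery and works directly with the paper's own definition of $\mu_{\min}$ as the minimum over indecomposable-summand slopes; the trade-off is that your key input is the same semistability-of-tensor-products theorem the general argument rests on, so nothing is gained in logical economy, only in transparency. One caveat: your parenthetical justification via the Narasimhan--Seshadri correspondence is shaky in genus one --- semistable degree-zero bundles on an elliptic curve are not all unitary flat (Atiyah's bundle $F_2$, the nonsplit self-extension of $\CO_E$, corresponds to a unipotent, non-unitary representation of $\pi_1(E) \cong \ZZ^2$), so you should lean on Atiyah's classification or the general characteristic-zero theorem instead; with that citation fixed, the argument is complete.
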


\begin{proof}
	See \cite[Page 29]{Huybrechts_Lehn}.
\end{proof}

\begin{lemma}  \label{lem: vector bundle hom}
	Let $\CV_1$ and $\CV_2$ be two vector bundles on $E$ such that $\CV_2$ is indecomposable. If $\Hom(\CV_1, \CV_2) \neq 0$, then 
	$
	\mu(\CV_2) \ge \mu_{\min}(\CV_1).
	$
\end{lemma}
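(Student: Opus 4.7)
The plan is to reduce the claim to the standard principle that a nonzero homomorphism between semistable sheaves forces the slope of the source to be at most the slope of the target, and then to invoke Atiyah's theorem that indecomposable bundles on an elliptic curve are semistable.

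First, I would decompose $\CV_1 = \bigoplus_{i=1}^{n} \CE_i$ into indecomposable summands as in the discussion preceding the lemma. Because
\[
\Hom(\CV_1, \CV_2) = \bigoplus_{i=1}^{n} \Hom(\CE_i, \CV_2),
\]
the hypothesis $\Hom(\CV_1, \CV_2) \neq 0$ yields a nonzero homomorphism $\phi : \CE_{i_0} \to \CV_2$ for some index $i_0$.

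Next, since $E$ is an elliptic curve, the fact from \cite{Tu} recalled earlier in this section tells us that indecomposable bundles on $E$ are semistable; in particular both $\CE_{i_0}$ and $\CV_2$ are semistable. I would then consider the image sheaf $\CI := \mathrm{Im}(\phi) \subseteq \CV_2$, which is automatically torsion-free as a subsheaf of a locally free sheaf on a smooth curve. Viewing $\CI$ as a nonzero subsheaf of the semistable bundle $\CV_2$ gives $\mu(\CI) \le \mu(\CV_2)$, while viewing $\CI$ as a nonzero quotient of the semistable bundle $\CE_{i_0}$ gives $\mu(\CE_{i_0}) \le \mu(\CI)$. Chaining these two inequalities yields $\mu(\CE_{i_0}) \le \mu(\CV_2)$, and since $\mu_{\min}(\CV_1) \le \mu(\CE_{i_0})$ by the definition of the minimal slope, the desired bound $\mu(\CV_2) \ge \mu_{\min}(\CV_1)$ follows.

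I do not anticipate any real obstacle here, as the argument is a textbook application of Atiyah's classification combined with the slope comparison for semistable sheaves. The only small point worth verifying is the torsion-freeness of $\mathrm{Im}(\phi)$, which is automatic on the smooth curve $E$ and ensures that both slope inequalities apply without modification.
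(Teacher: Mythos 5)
Your proof is correct and follows essentially the same route as the paper: the paper likewise observes that the indecomposable $\CV_2$ is semistable with $\mu_{\max}(\CV_2) = \mu(\CV_2)$ and then cites \cite[Lemma 1.3.3]{Huybrechts_Lehn}, whereas you simply inline the content of that citation by decomposing $\CV_1$ into indecomposable (hence semistable) summands and running the standard image-sheaf slope comparison $\mu(\CE_{i_0}) \le \mu(\mathrm{Im}\,\phi) \le \mu(\CV_2)$. All steps are sound, including the torsion-freeness of the image on the smooth curve $E$ and the fact that $\mu_{\min}(\CV_1)$, as defined in the paper via indecomposable summands, bounds $\mu(\CE_{i_0})$ from below.
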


\begin{proof}
	Since $\CV_2$ is indecomposable, it is semistable, and $\mu_{\max}(\CV_2) = \mu(\CV_2)$. Thus this lemma follows from \cite[Lemma 1.3.3]{Huybrechts_Lehn}.
\end{proof}

\begin{lemma} \label{lem: vector bundle surjection}
	If there is a generically surjective morphism $\phi: \CV_1 \to \CV_2$ between vector bundles $\CV_1$ and $\CV_2$ on $E$, then 
	$
	\mu_{\min}(\CV_2) \ge \mu_{\min}(\CV_1).
	$
\end{lemma}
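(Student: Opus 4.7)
My plan is to reduce the statement to the preceding Lemma~\ref{lem: vector bundle hom} by decomposing $\CV_2$ into indecomposable summands and projecting.

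Write the indecomposable decomposition $\CV_2 = \bigoplus_{j=1}^m \CV_{2,j}$. Since every indecomposable bundle on the elliptic curve $E$ is semistable, we have
\[
\mu_{\min}(\CV_2) = \min_{1 \le j \le m} \mu(\CV_{2,j}),
\]
so I may pick an index $j_0$ achieving this minimum, i.e.\ $\mu(\CV_{2,j_0}) = \mu_{\min}(\CV_2)$.

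Next I would observe that the composition
\[
\psi \colon \CV_1 \xrightarrow{\phi} \CV_2 \xrightarrow{\mathrm{pr}_{j_0}} \CV_{2,j_0}
\]
is nonzero. Indeed, if $\psi = 0$ then the image of $\phi$ would be contained in the complementary subbundle $\bigoplus_{j \ne j_0} \CV_{2,j} \subsetneq \CV_2$, contradicting the assumption that $\phi$ is generically surjective. Hence $\Hom(\CV_1, \CV_{2,j_0}) \ne 0$, and since $\CV_{2,j_0}$ is indecomposable, Lemma~\ref{lem: vector bundle hom} gives $\mu(\CV_{2,j_0}) \ge \mu_{\min}(\CV_1)$. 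Combining with the choice of $j_0$ yields $\mu_{\min}(\CV_2) = \mu(\CV_{2,j_0}) \ge \mu_{\min}(\CV_1)$, as desired.

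There is no real obstacle here; the only small point to verify carefully is that generic surjectivity of $\phi$ forces each component map $\mathrm{pr}_j \circ \phi$ to be nonzero, which is immediate from the direct sum decomposition.
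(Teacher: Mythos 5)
Your proof is correct and essentially identical to the paper's: the paper likewise picks a direct summand $\CF$ of $\CV_2$ with $\mu(\CF) = \mu_{\min}(\CV_2)$ (which, by the paper's definition of $\mu_{\min}$, is exactly an indecomposable summand achieving the minimal slope), notes the composition with the projection is nonzero by generic surjectivity, and concludes via \cite[Lemma 1.3.3]{Huybrechts_Lehn}, the same result underlying Lemma~\ref{lem: vector bundle hom}. Your only addition is spelling out why the component map is nonzero, which the paper leaves implicit.
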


\begin{proof}
	Let $\CF$ be a direct summand of $\CV_2$ with $\mu(\CF) = \mu_{\min}(\CV_2)$. Since $\phi$ is generically surjective, the composition $\CV_1 \to \CF$ of $\phi$ and the natural projection $\CV_2 \to \CF$ is nonzero. Then this lemma follows from \cite[Lemma 1.3.3]{Huybrechts_Lehn}.
\end{proof}


\begin{lemma} \cite[Lemma 4.3]{Chen_Hacon2} \label{lem: vector bundle IT0}
	Let $\CF$ be a vector bundle on $E$. Then $\CF$ is ample if and only if $H^1(E, \CF \otimes P)=0$ for all $P \in \Pic^0(E)$.
\end{lemma}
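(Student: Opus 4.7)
The plan is to reduce the lemma to the classical criterion that a vector bundle $\CF$ on a smooth curve is ample if and only if $\mu_{\min}(\CF) > 0$, i.e., every indecomposable direct summand of $\CF$ has strictly positive degree (equivalently, every nonzero quotient has positive degree; see Hartshorne's work on ample vector bundles). Once this criterion is granted, the content of the lemma becomes a translation between slope positivity and the cohomological vanishing condition, and the structural decomposition of $\CF$ on an elliptic curve recalled just before the statement does most of the work.

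For the forward direction I would exploit the fact that $\omega_E = \CO_E$. By Serre duality,
\[
H^1(E, \CF \otimes P) \cong H^0(E, \CF^\vee \otimes P^{-1})^\vee.
\]
If $\CF$ is ample, each indecomposable summand $\CE_i$ of $\CF$ satisfies $\mu(\CE_i) > 0$, so every indecomposable summand of $\CF^\vee \otimes P^{-1}$ has slope $-\mu(\CE_i) < 0$ (since $\deg P^{-1} = 0$), hence negative degree, and therefore admits no nonzero global sections on a curve. So $H^1(E, \CF \otimes P) = 0$ for every $P \in \Pic^0(E)$.

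For the reverse direction, assume $\mu_{\min}(\CF) \le 0$, and pick an indecomposable summand $\CF_0$ of $\CF$ with $\deg(\CF_0) \le 0$. I would split into two subcases. If $\deg(\CF_0) < 0$, Riemann-Roch on $E$ gives $\chi(\CF_0) = \deg(\CF_0) < 0$, hence $h^1(E, \CF_0) > 0$; taking $P = \CO_E$ already contradicts the hypothesis. The harder subcase is $\deg(\CF_0) = 0$: here I would invoke Atiyah's classification of indecomposable bundles on elliptic curves to write $\CF_0 \cong F_r \otimes L$ for some $L \in \Pic^0(E)$, where $F_r$ is the unique indecomposable bundle of rank $r$ and degree $0$ with $h^0(F_r) = h^1(F_r) = 1$. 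Taking $P = L^{-1}$ then produces $h^1(E, \CF_0 \otimes P) = h^1(E, F_r) = 1 \ne 0$, contradicting the assumed vanishing.

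The main obstacle is this degree-zero subcase: Riemann-Roch alone gives $\chi(\CF_0 \otimes P) = 0$ and cannot detect a nonvanishing $H^1$, so one genuinely needs the finer structural input from Atiyah's classification to exhibit a specific $P \in \Pic^0(E)$ along which the cohomology fails to vanish. Everything else reduces to Serre duality on an elliptic curve together with the decomposition of $\CF$ into indecomposable (hence semistable) summands already recorded in the preceding paragraph of the paper.
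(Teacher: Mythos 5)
Your proof is correct, but note that the paper itself gives no argument for this lemma: it is quoted verbatim from Chen--Hacon [Lemma 4.3, \emph{Chen\_Hacon2}], so there is no in-paper proof to match against. What you supply is a complete, self-contained replacement, and the route is the standard one: reduce ampleness on an elliptic curve to the numerical criterion $\mu_{\min}(\CF)>0$ (Hartshorne's theorem that an indecomposable bundle on an elliptic curve is ample iff it has positive degree), then translate between slope positivity and the vanishing of $H^1(E,\CF\otimes P)$ using Serre duality with $\omega_E=\CO_E$, Riemann--Roch (which on a genus-one curve gives $\chi(\CF_0)=\deg\CF_0$), and, crucially, Atiyah's classification to handle the degree-zero indecomposable summands $\CF_0\cong F_r\otimes L$, where twisting by $P=L^{-1}$ exhibits $h^1(E,F_r)=1\neq 0$. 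You correctly identify this degree-zero case as the genuine content --- Riemann--Roch alone sees only $\chi=0$ there. Two small remarks. First, in the forward direction the step ``negative degree, therefore no nonzero global sections'' is false for arbitrary bundles on a curve; it holds here only because each indecomposable summand of $\CF^\vee\otimes P^{-1}$ is \emph{semistable} of negative slope, so any nonzero section $\CO_E\to\CE_i^\vee\otimes P^{-1}$ would violate $\mu_{\max}<0$. You have this input available (the paper records, via Tu, that indecomposable bundles on $E$ are semistable, and you acknowledge it in your closing paragraph), but the deduction should be phrased through slopes, not degrees. Second, be aware that Hartshorne's criterion is an external input of comparable depth to the lemma itself; since the paper likewise outsources the statement to Chen--Hacon, this is a fair trade, but your argument is a reduction to Hartshorne plus Atiyah rather than a proof from scratch.
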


\begin{coro} \label{coro: vector bundle ample}
	Let $f: V \to E$ be a fibration from a smooth variety $V$ to $E$, and let $L$ be a nef and big $\QQ$-divisor with its fractional part being simple normal crossing. Then the sheaf $f_* \CO_V(K_V + \roundup{L})$ is ample.
\end{coro}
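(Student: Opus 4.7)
The plan is to combine the preceding Chen--Hacon criterion (Lemma \ref{lem: vector bundle IT0}, which says that a vector bundle $\CF$ on $E$ is ample iff $H^1(E, \CF \otimes P) = 0$ for every $P \in \Pic^0(E)$) with Kawamata--Viehweg vanishing on $V$, pushed down through the Leray spectral sequence. First I would note that $f_*\CO_V(K_V + \roundup{L})$ is torsion-free on the smooth curve $E$, hence locally free, so the criterion applies; the goal then reduces to verifying
$$H^1\bigl(E,\, f_*\CO_V(K_V + \roundup{L}) \otimes P\bigr) = 0 \quad \text{for every } P \in \Pic^0(E).$$

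To establish this, I would pull $P$ back to $V$. Since $f^*P$ is a numerically trivial line bundle, the $\QQ$-divisor $L$ twisted by $f^*P$ is still nef and big, and its fractional part coincides with that of $L$ (hence is SNC); moreover $\roundup{L} \otimes f^*P$ agrees with $\roundup{L + f^*P}$ as a line bundle. Kawamata--Viehweg vanishing then yields
$$H^i\bigl(V,\, \CO_V(K_V + \roundup{L}) \otimes f^*P\bigr) = 0 \quad \text{for all } i > 0.$$

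Finally I would transfer this vanishing down to $E$. By the projection formula,
$f_*\bigl(\CO_V(K_V + \roundup{L}) \otimes f^*P\bigr) \cong f_*\CO_V(K_V + \roundup{L}) \otimes P,$
and since $\dim E = 1$ the Leray spectral sequence collapses into a short exact sequence
$$0 \to H^1\bigl(E, f_*\CO_V(K_V + \roundup{L}) \otimes P\bigr) \to H^1\bigl(V, \CO_V(K_V + \roundup{L}) \otimes f^*P\bigr) \to H^0\bigl(E, R^1 f_*(\,\cdot\,) \otimes P\bigr) \to 0.$$
The middle term vanishes by the previous step, which forces the desired vanishing of the leftmost term.

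I do not anticipate a genuine obstacle: the whole argument is a standard projection-formula-plus-vanishing package on a one-dimensional base. The only minor point worth verifying is that twisting $L$ by the numerically trivial line bundle $f^*P$ preserves the hypothesis ``nef and big $\QQ$-divisor with SNC fractional part'' needed for Kawamata--Viehweg, which is immediate because $f^*P$ is integral with $c_1(f^*P) \equiv 0$.
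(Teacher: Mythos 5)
Your proof is correct and follows essentially the same route as the paper: Kawamata--Viehweg vanishing for $K_V + \roundup{L}$ twisted by $f^*P$, pushed down via the Leray spectral sequence (with the projection formula), and then the Chen--Hacon criterion of Lemma \ref{lem: vector bundle IT0}. The extra details you supply --- local freeness of the pushforward and the check that twisting by the numerically trivial $f^*P$ preserves the Kawamata--Viehweg hypotheses --- are points the paper leaves implicit, and they are verified correctly.
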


\begin{proof}
	By the Kawamata-Viehweg vanishing theorem, for any $P \in \Pic^0(E)$, we have $h^1 \left(V, \CO_V \left(K_V + \roundup{L} \right) \otimes f^*P \right) = 0$.
	By the Leray spectral sequence, we deduce that $h^1 (E, f_* \CO_V(K_V + \roundup{L}) \otimes P) = 0$. By Lemma \ref{lem: vector bundle IT0}, $f_* \CO_V(K_V + \roundup{L})$ is ample.
\end{proof}

\section{Irregular $3$-folds of general type with $q \ge 2$}

Throughout this section, we assume that $X$ is a minimal $3$-fold of general type with $q(X) \ge 2$. The main result in this section is the following theorem.

\begin{theorem} \label{thm: q>1}
	Let $X$ be as above. Then
	$$
	K_X^3 \ge \frac{4}{3}p_g(X),
	$$ 
	and the equality holds only when $\alb \dim X = 2$ and $q(X) = 2$. 
	Suppose furthermore that the general Albanese fibre of $X$ is not a $(1,2)$-surface. Then
	$$
	K_X^3 \ge 2p_g(X)
	$$
	if $X$ is Gorenstein or $p_g(X) \ne 2, 3$, 
\end{theorem}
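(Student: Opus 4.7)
The plan is to stratify by the Albanese dimension $\alb\dim X \in \{1,2,3\}$ and apply different tools in each stratum, with the canonical image $\Sigma := \phi_{K_X}(X)$ providing an auxiliary stratification in the higher Albanese dimension cases.

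When $\alb\dim X = 1$, the Stein factorization of the Albanese map produces a fibration $f : X \to B$ with $g(B) \ge q(X) \ge 2$ whose general fibre $F$ is a smooth surface of general type. I would feed this directly into the Noether--Severi inequality and the slope inequality for fibred $3$-folds from \cite{Hu_Zhang1,Hu_Zhang2}. Because $g(B) \ge 2$ contributes a strictly positive base term, a direct numerical comparison gives $K_X^3 \ge \frac{4}{3}p_g(X)$; the slope saturates exactly when $F$ is a $(1,2)$-surface, so excluding such fibres upgrades the bound to $K_X^3 \ge 2p_g(X)$.

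When $\alb\dim X \ge 2$, I would pass to a resolution $\pi : V' \to X$ on which $\movable|\pi^* K_X|$ is base-point-free and pick a general smooth member $S$. Since $\pi^* K_X - S$ is effective, Lemma \ref{lem: extension} with $\lambda = 1$ yields
\[
4\, K_X^3 \ge K_{S_0}^2,
\]
where $\sigma : S \to S_0$ is the minimal model contraction. The surface $S$ dominates the Albanese image of $X$, hence $S$ is itself irregular of general type, and one can apply Pardini's Severi inequality \cite{Pardini} when $\alb\dim S = 2$, or Xiao's slope inequality \cite{Xiao} when $\alb\dim S = 1$, to extract $K_{S_0}^2 \ge 4\chi(\omega_{S_0})$. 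The remaining input is an upper bound for $p_g(X)$ in terms of $\chi(\omega_{S_0})$, obtained by case analysis on $\dim \Sigma$. If $\dim \Sigma = 3$, then Lemma \ref{lem: h^0(2L)} combined with Zhang's Clifford--Severi inequality \cite{Zhang_Clifford} already forces $K_X^3$ strictly above $2p_g(X)$. If $\dim \Sigma = 2$, the restriction sequence
\[
0 \to \CO_{V'}(\pi^* K_X - S) \to \CO_{V'}(\pi^* K_X) \to \CO_S(\pi^* K_X|_S) \to 0
\]
together with adjunction yields $p_g(X) - 1 \le p_g(S_0) = \chi(\omega_{S_0}) + q(S_0) - 1$, and $q(S_0)$ is controlled by $q(X)$. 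If $\dim \Sigma = 1$, the canonical map itself provides a pencil and reduces to the fibration analysis of the first case.

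The hard part will be the $\dim \Sigma = 2$ subcase: the defect $p_g(X) - \chi(\omega_X) = h^2(X,\CO_X) - q(X) + 1$ can be large in dimension three, unlike in the surface setting, so the naive chain $4 K_X^3 \ge K_{S_0}^2 \ge 4 \chi(\omega_{S_0}) \ge 4(p_g(X) - q(X))$ only delivers $K_X^3 \ge p_g(X) - q(X)$. Closing the gap to the $\frac{4}{3}$ multiplier (and to $2$ under the non-$(1,2)$-fibre hypothesis) will require either a refined choice of divisor in Lemma \ref{lem: extension} exploiting a larger admissible $\lambda$ supplied by a closer analysis of $\phi_{K_X}$, or an improvement of the surface Severi bound using the extra positivity $S$ inherits from $K_X$. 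For the equality statement I would trace sharpness through each step: Pardini's inequality forces $\alb\dim S = 2$; equality in Lemma \ref{lem: extension} imposes rigid numerical constraints on $(\pi^* K_X)^2 \cdot S$; and a Hodge-index computation on $S$ combined with $q(S_0) \le q(X)$ rules out $\alb\dim X = 3$ and $q(X) \ge 3$, leaving only $\alb\dim X = 2$ and $q(X) = 2$.
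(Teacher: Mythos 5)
Your stratification and toolkit match the paper's skeleton (Albanese fibration plus \cite{Hu_Zhang1,Hu_Zhang2} when $\alb\dim X = 1$; Lemma \ref{lem: extension} with $\lambda = 1$, Pardini, Xiao, and the Severi inequality of \cite{Zhang_Clifford} when $\alb\dim X \ge 2$, stratified by $\dim \Sigma$), but the step you yourself flag as ``the hard part'' is a genuine gap, and the paper closes it by an idea absent from your proposal. In the case $\dim \Sigma = 2$ the engine is not the restriction sequence bounding $p_g(X) - 1$ by $p_g(S)$ (which, as you compute, only yields a coefficient $1$), but the multiplication to the bicanonical level: Reid's bound on quadrics through the non-degenerate image $\Sigma$ (Lemma \ref{lem: Reid90}, packaged as Lemma \ref{lem: h^0(2L)}) gives $h^0(X', 2S) \ge 2p_g(X) - 1 + \min\{\deg \Sigma, 2p_g(X) - 5\}$, and Koll\'ar's $R^1\psi_*\omega_{X'} = \omega_{\Sigma'}$ together with vanishing converts this into $\chi(\omega_X) + \chi(\omega_S) \gtrsim 3p_g(X)$ (Lemma \ref{lem: albdim 2 canonical dim 2 chi}); combined with $K_X^3 \ge 4\chi(\omega_X)$, $4K_X^3 \ge K_{S_0}^2$ and the surface Severi/slope inequalities this delivers the coefficient $2$. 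The residual subcases are handled by Nagata's theorems \cite{Nagata} bounding $\deg \tau$ and $g(H')$, and by the subsheaf trick $h_*\CO_S(2M|_S) \otimes \omega_{H'}^{-1} \subseteq h_*\omega_{S/H'}$ feeding Xiao's slope inequality (Lemma \ref{lem: albdim 2 canonical dim 2 case 2}); neither a ``larger admissible $\lambda$'' in Lemma \ref{lem: extension} nor an improved Severi bound, the two fixes you float, is what actually works.

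Two further steps in your outline would fail as stated. First, $\dim \Sigma = 1$ does \emph{not} reduce to the Albanese-fibration analysis: when $g(\Sigma') = 0$ the canonical pencil is over $\PP^1$ and carries no relation to the Albanese map, and the paper needs a separate argument (Lemma \ref{lem: extension} with $\lambda = \frac{1}{p_g(X)-1}$, Debarre's $K_{S_0}^2 \ge 2p_g(S)$ \cite{Debarre}, and Ohno's splitting of $R^1\psi_*\omega_{X'}$ to force $h^2(X, \CO_X) \le 1$ before invoking $K_X^3 \ge 4\chi(\omega_X)$, as in Proposition \ref{prop: albdim 2 canonical dim 1 case 1}). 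Second, your proposal never touches $p_g(X) \le 3$ with $\alb\dim X \ge 2$, yet this is where the real work lies: Proposition \ref{prop: albdim 2 canonical dim 1 case 2} runs through the Chen--Jiang decomposition of $a_*\omega_X$, \cite{Jiang}, and the classification of surfaces with $p_g = q = 3$ as theta divisors. Crucially, the earlier propositions show equality $K_X^3 = \frac{4}{3}p_g(X)$ can occur only when $p_g(X) \in \{2, 3\}$, so $K_X^3 \le 4$; the conclusion $\alb\dim X = 2$, $q(X) = 2$ then comes from \cite[Theorem 1.3]{Zhang_Clifford} and the proof of \cite[Proposition 4.3]{Jiang}, not from tracing sharpness through Pardini and Kawamata in the generic strata as you propose --- in those strata the inequalities are strict. (A smaller issue: in the $\alb\dim X = 1$, $(1,2)$-fibre case your ``direct numerical comparison'' breaks when $h^2(X, \CO_X) \ge q(X)$, since $\chi(\omega_X)$ can fall below $p_g(X)$; the paper splits on $h^2 \le 1$ versus $h^2 \ge 2$ and uses Fujita's decomposition to force $f_*\omega_{X/B} = \CO_B^{\oplus 2}$ in the latter case, as in Proposition \ref{prop: albdim 1 q>1}.)
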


The rest of this section is devoted to the proof.


\subsection{The case when $\alb \dim X = 1$}
Suppose first that $\alb \dim X = 1$. Let
$$
f: X \to B
$$
be the Albanese fibration of $X$, where $B$ is a smooth curve of genus $g(B) = q(X) \ge 2$. Let $F$ be a general fibre of $f$.

\begin{prop} \label{prop: albdim 1 q>1}
	Suppose that $\alb \dim X = 1$. Then $K^3_X > \frac{4}{3} p_g(X)$ if $F$ is a $(1, 2)$-surface, and $K_X^3 \ge 2p_g(X)$ otherwise. In particular, Theorem \ref{thm: q>1} holds.
\end{prop}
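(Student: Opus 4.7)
The plan is to exploit the fibration structure $f\colon X\to B$ with $g(B)=q(X)\ge 2$ and play off the slope inequality for this fibration against the Noether--Severi inequality, both taken from the authors' earlier papers \cite{Hu_Zhang1, Hu_Zhang2}. After a birational modification I may assume $X$ is smooth and the general fibre $F$ is a smooth minimal surface of general type; since $B$ is a curve, the relative canonical decomposition gives
\[
K_X^3 \;=\; K_{X/B}^3 \;+\; 3(2g(B)-2)\,K_F^2,
\]
and the crucial point is that the boundary term $3(2g(B)-2)K_F^2\ge 6K_F^2$ is \emph{positive} precisely because $g(B)\ge 2$.

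First I would invoke the slope inequality from \cite{Hu_Zhang1, Hu_Zhang2} to bound
\[
K_{X/B}^3 \;\ge\; \lambda\cdot \deg f_*\omega_{X/B},
\]
with $\lambda=\tfrac{4}{3}$ when $F$ is a $(1,2)$-surface and a larger value (at least $2$, under the relevant Gorenstein/$p_g$ hypotheses of Theorem~\ref{thm: q>1}) when $F$ is not. Next I would translate the geometric genus through the Leray spectral sequence, writing $p_g(X)=h^0(B,f_*\omega_{X/B}\otimes\omega_B)$, and apply the Noether--Severi inequality of \cite{Hu_Zhang2} together with nefness of $f_*\omega_{X/B}$ to obtain an upper bound of the shape
\[
p_g(X) \;\le\; \deg f_*\omega_{X/B} + p_g(F)\,(g(B)-1) + \text{(controlled lower-order terms)}.
\]

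With these two inputs, the two cases resolve as follows. If $F$ is not a $(1,2)$-surface, then Noether's inequality on the surface $F$ gives $K_F^2\ge 2p_g(F)-4$, and substitution into the expressions for $K_X^3$ and $p_g(X)$ lets the boundary contribution absorb the $p_g(F)(g(B)-1)$ term, yielding $K_X^3\ge 2p_g(X)$ under the stated hypotheses. If $F$ is a $(1,2)$-surface, then $K_F^2=1$ and $p_g(F)=2$, so the boundary contribution is at least $6$ while the slope bound only gives $\tfrac{4}{3}\deg f_*\omega_{X/B}$; combining with the Noether--Severi upper bound on $p_g(X)$, one gets
\[
K_X^3 \;\ge\; \tfrac{4}{3}\deg f_*\omega_{X/B} + 3(2g(B)-2) \;>\; \tfrac{4}{3}\,p_g(X),
\]
the final inequality being strict because $3(2g(B)-2)>\tfrac{4}{3}\cdot 2(g(B)-1)$ when $g(B)\ge 2$.

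\textbf{Main obstacle.} The delicate part is the $(1,2)$-surface case, where the slope is exactly $\tfrac{4}{3}$ and equality is only ruled out by the positive boundary contribution. One must therefore track the error terms in the Noether--Severi inequality carefully enough to ensure that the gap $3(2g(B)-2)-\tfrac{4}{3}\cdot 2(g(B)-1) = \tfrac{10}{3}(g(B)-1)$ really dominates any slack introduced in comparing $p_g(X)$ to $\deg f_*\omega_{X/B}$; that is, the step where the slope inequality is sharp needs the base-curve positivity supplied by $g(B)\ge 2$ to convert $\ge$ into $>$.
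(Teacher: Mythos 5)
Your plan is sound, and in the non-$(1,2)$ case it is essentially the paper's own argument: the paper likewise writes $K_X^3 = K_{X/B}^3 + 6(g(B)-1)K_F^2$, applies the slope inequality $K_{X/B}^3 \ge 2\deg f_*\omega_{X/B}$ of \cite[Theorem 1.6]{Hu_Zhang1} (note that no Gorenstein or $p_g$ hypothesis is needed in the $\alb\dim X = 1$ case, so your hedge there is unnecessary), converts degree into $p_g(X)$ by Riemann--Roch, and absorbs the error via $3K_F^2 \ge 2p_g(F)$, which holds exactly when $F$ is not a $(1,2)$-surface. In the $(1,2)$ case your route genuinely differs: you run the same relative decomposition uniformly with the slope-$\frac{4}{3}$ inequality, and since Riemann--Roch gives $p_g(X) = \deg f_*\omega_{X/B} + 2(g(B)-1) + h^1(B, f_*\omega_X)$, your gap is $K_X^3 - \frac{4}{3}p_g(X) \ge \frac{10}{3}(g(B)-1) - \frac{4}{3}h^1(B, f_*\omega_X) \ge \frac{2}{3} > 0$ once one knows $h^1(B, f_*\omega_X) \le \rank f_*\omega_X = 2$. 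The paper instead splits on $h^2(X,\CO_X)$: when $h^2(X,\CO_X) \le 1$ it quotes the absolute Noether--Severi inequality $K_X^3 > \frac{4}{3}\chi(\omega_X)$ of \cite[Theorem 1.1]{Hu_Zhang2} and uses $\chi(\omega_X) \ge p_g(X)$ (which needs $q(X) \ge 2$), while when $h^2(X,\CO_X) \ge 2$ Fujita's decomposition forces $f_*\omega_{X/B} = \CO_B^{\oplus 2}$, whence $p_g(X) = 2g(B)$ and $K_X^3 \ge 6(g(B)-1) = 3p_g(X) - 6 > \frac{4}{3}p_g(X)$. Your version is more uniform and avoids the case split; the paper's version leans on the absolute inequality and gets the degenerate case almost for free from the triviality of $f_*\omega_{X/B}$.

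Two repairs are needed to make your sketch rigorous. First, you cannot ``assume $X$ is smooth after a birational modification'': that destroys nefness of $K_{X/B}$ and changes $K_X^3$, and the slope inequalities of \cite{Hu_Zhang1} are stated for the minimal model. Fortunately no modification is required, since $X$ is already minimal and the general Albanese fibre avoids the finitely many terminal singular points, hence is automatically a smooth minimal surface. Second, the upper bound on $p_g(X)$ has nothing to do with the Noether--Severi inequality; it is exactly Riemann--Roch on $B$ together with Fujita's bound $h^1(B, f_*\omega_X) \le \rank f_*\omega_X = p_g(F)$ from \cite[Theorem 3.1]{Fujita}. That bound is the precise ``controlled lower-order term'' your obstacle paragraph calls for: without it, an $h^1$ of size $3$ or more would make your gap $\frac{10}{3}(g(B)-1) - \frac{4}{3}h^1$ negative at $g(B) = 2$, so this is the one ingredient your plan must cite explicitly rather than leave implicit.
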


\begin{proof}
	We first assume that $F$ is a $(1, 2)$-surface. If $h^2(X, \CO_X) \le 1$, by \cite[Theorem 1.1]{Hu_Zhang2}, we have
	$$
	K_X^3 > \frac{4}{3} \chi(\omega_X) = \frac{4}{3} p_g(X) + \frac{4}{3}\left(q(X) - h^2(X, \CO_X) - 1\right) \ge \frac{4}{3} p_g(X).
	$$
	Now suppose that $h^2(X, \CO_X) \ge 2$. Since $q(F) = 0$ and $R^1 f_*\omega_X$ is torsion free by \cite[Theorem 2.1]{Kollar_Higher_direct_image}, we know that $R^1f_* \omega_X = 0$. By the Leray spectral sequence, we have $h^2(X, \CO_X) = h^1(X, \omega_X) = h^1(B, f_*\omega_X)$. Since $f_* \omega_{X/B}$ is of rank two and $h^1(B, f_*\omega_X) \ge 2$, Fujita's decomposition \cite[Theorem 3.1]{Fujita} forces $f_* \omega_{X/B} = \CO_B^{\oplus 2}$. Thus $f_*\omega_X = \omega_B^{\oplus 2}$. In particular, $p_g(X) = 2g(B) \ge 4$. Note that $K_{X/B}$ is nef. It follows that
	$$
	K_X^3 = K_{X/B}^3 + 6\left(g(B) - 1\right) \ge 6\left(g(B) - 1\right) = 3p_g(X) - 6 > \frac{4}{3}p_g(X).
	$$


	We now assume that $F$ is not a $(1, 2)$-surface. By \cite[Theorem 1.6]{Hu_Zhang1}, $K_{X/B}^3 \ge 2 \deg f_* \omega_{X/B}$. Thus it follows that
	\begin{align*}
		K^3_X & = K_{X/B}^3 + 6\left(g(B) - 1\right)K_F^2 \\
		& \ge 2 \deg f_* \omega_{X/B} + 6\left(g(B) - 1\right)K_F^2 \\
		& = 2 \deg f_* \omega_X + \left(6K_F^2 - 4p_g(F)\right) \left(g(B) - 1\right) \\ 
		& = 2p_g(X) - 2h^1(B, f_*\omega_X) + \left(6K_F^2 - 2p_g(F)\right) \left(g(B) - 1\right),
	\end{align*}
	where the last equality follows from the Riemann-Roch theorem. By \cite[Theorem 3.1]{Fujita}, $h^1(B, f_*\omega_X) \le \rank f_*\omega_X = p_g(F)$. Moreover, since $K_F^2 \ge 2p_g(F) - 4$, it is easy to see that $3K_F^3 \ge 2p_g(F)$ unless $F$ is a $(1, 2)$-surface. Combine the above inequalities together, and we deduce that
	$$
	K^3_X \ge 2p_g(X) - 2p_g(F) + 6K_F^2 - 2p_g(F) \ge 2p_g(X).
	$$
	Thus the proof is completed.
\end{proof}

\subsection{The case when $\alb \dim X \ge 2$} \label{subsection: albdim 2} We start from the following result.

\begin{prop} \label{prop: albdim 2 pg=1}
	Suppose that $\alb \dim X \ge 2$ and $p_g(X) = 1$. Then $K_X^3 \ge 2$.
\end{prop}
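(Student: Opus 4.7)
The plan is to combine a Severi-type volume inequality with Albanese-theoretic bookkeeping. Writing
$$
\chi(\omega_X) = p_g(X) - h^2(X,\CO_X) + q(X) - 1 = q(X) - h^2(X,\CO_X)
$$
(using $p_g(X)=1$), I would first attempt to show $\chi(\omega_X) \ge 1$ and then conclude from a bound of the form $K_X^3 \ge c\,\chi(\omega_X)$ with $c \ge 2$, as available from \cite{Zhang_Clifford}; the Noether-Severi inequality $K_X^3 \ge \tfrac{4}{3}\chi(\omega_X)$ from \cite{Hu_Zhang2} would be the fallback when $\chi(\omega_X) \ge 2$. The argument splits on $\alb\dim X \in \{2,3\}$.

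When $\alb\dim X = 2$, the Stein factorization of the Albanese map yields a fibration $f\colon X \to W$ onto a normal surface $W$ of maximal Albanese dimension (hence birationally of general type, with $q(W)\ge 2$), whose general fibre is a smooth curve. Using the decomposition of $f_*\omega_X$ together with the slope-type inequality in \cite{Hu_Zhang1}, I would bound $h^2(X,\CO_X)$ from above in terms of $q(X)$ and fibre data, deduce $\chi(\omega_X) \ge 1$, and conclude. When $\alb\dim X = 3$, $a\colon X \to \mathrm{Alb}(X)$ is generically finite, and the cup-product injections $\bigwedge^k H^0(\Omega^1_X) \hookrightarrow H^0(\Omega^k_X)$ (coming from the generic finiteness of $a$ and the identity $H^0(\Omega^1_X)\cong H^0(\Omega^1_{\mathrm{Alb}(X)})$) yield $\binom{q(X)}{3} \le p_g(X) = 1$ and $h^2(X,\CO_X) \ge \binom{q(X)}{2}$. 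Combined with $q(X) \ge \alb\dim X = 3$, these force $q(X)=3$ and $h^2(X,\CO_X) \ge 3$; Green-Lazarsfeld generic vanishing then gives $\chi(\omega_X)=0$, so the Severi inequality is vacuous.

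The main obstacle is precisely this last sub-case. Here $a$ is a generically finite surjective morphism onto an abelian $3$-fold of degree $d \ge 2$ (since $X$ is of general type and cannot be birational to an abelian variety), and $K_X$ coincides with the ramification divisor $R$ of $a$, so $K_X^3 = R^3$. I would analyse this situation through the pushforward $a_*\omega_X$: it is a torsion-free GV-sheaf on $\mathrm{Alb}(X)$ of generic rank $d$ with $\chi(a_*\omega_X)=0$, which by the Pareschi-Popa/Ein-Lazarsfeld structure theorem forces the existence of a positive-dimensional subtorus $K \subset \mathrm{Alb}(X)$ and a resulting fibration $g\colon X \to \mathrm{Alb}(X)/K$ whose general fibre is of general type. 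Combining Viehweg's weak positivity of $g_*\omega_{X/(\mathrm{Alb}(X)/K)}^{\otimes m}$ with an Iitaka-type inequality on this induced fibration should then yield the explicit bound $R^3 = K_X^3 \ge 2$. Making this quantitative extraction of a volume lower bound from the generic-vanishing-degenerate structure is the technical heart of the argument.
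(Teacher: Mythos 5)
There is a genuine gap, and it sits exactly where the difficulty of this proposition lies. The paper's own proof is a two-line citation: Jiang's results (\cite[Theorem 1.4 and Proposition 4.3]{Jiang}) give $K_X^3 \ge 12$ when $\alb\dim X = 3$ and $K_X^3 \ge 2$ when $\alb\dim X = 2$, and those results already contain the analysis of the degenerate cases that your sketch leaves open. Concretely: in the case $\alb\dim X = 2$, your plan is to prove $\chi(\omega_X) \ge 1$ and then invoke a Severi-type bound, but $\chi(\omega_X) \ge 1$ is precisely what fails in the hard case. When the continuous rank $h^0_a(X, K_X)$ vanishes (which is compatible with $p_g(X) = 1$, since $p_g$ can come entirely from torsion summands of $a_*\omega_X$), Jiang's decomposition $a_*\omega_X = \CE_0 \oplus \bigoplus_i \CE_i$ with $\CE_i = \bigoplus_j (p_i^*\CF_{ij} \otimes \CP_{ij})$ forces $\chi(\omega_X) = 0$: each summand with $\CP_{ij} \notin p_i^*\Pic^0(E_i)$ has all cohomology groups zero, and torsion line bundles on an abelian surface have $\chi = 0$. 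This is why the paper's treatment of the neighboring case $p_g(X) = 2$ (Proposition 3.9) splits on $h^0_a \ge 1$ versus $h^0_a = 0$ rather than on $\chi$. Two further slips in this case: the Stein factorization image $W$ need not be of general type (it can be birational to an abelian surface, e.g.\ when $q(X) = 2$ and $a$ is surjective), and the slope inequality of \cite{Hu_Zhang1} applies to fibrations over \emph{curves}, not to the fibration $X \to W$ over a surface, so the tool you name does not act on the object you build.

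In the case $\alb\dim X = 3$, the step $\binom{q(X)}{3} \le p_g(X)$ requires injectivity of $\bigwedge^3 H^0(\Omega^1_X) \to H^0(\omega_X)$, which is automatic only when $q(X) = 3$ (so that $da$ is generically an isomorphism onto $T_aA$); for $q(X) > 3$ the pullback of $3$-forms from a higher-dimensional abelian variety to a $3$-dimensional Albanese image can have a kernel (this is the classical nondegeneracy issue for the Gauss map of subvarieties of abelian varieties), so the reduction to $q(X) = 3$ is unjustified as stated. And even granting $q(X) = 3$, $\chi(\omega_X) = 0$, and $a$ generically finite of degree $\ge 2$, your final step is explicitly conjectural: the passage from the Chen--Hacon/Pareschi--Popa structure of $a_*\omega_X$ with $\chi = 0$ to a quantitative bound on $K_X^3$ is announced as ``the technical heart'' and never carried out, whereas it is exactly this extraction (done in \cite{Jiang}, yielding the much stronger $K_X^3 \ge 12$) that the proposition needs. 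So the proposal correctly maps the terrain --- the Severi fallback when $\chi \ge 1$ is fine, and the identification of the $\chi = 0$ loci as the obstruction is accurate --- but in both Albanese dimensions the proof stops at the boundary of the degenerate case instead of crossing it.
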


\begin{proof}
	By \cite[Theorem 1.4 and Proposition 4.3]{Jiang}, we have $K_X^3 \ge 12$ when $\alb \dim X = 3$ and $K_X^3 \ge 2$ when $\alb \dim X = 2$. Thus the result follows.
\end{proof}

In the rest of this subsection, we assume that $\alb \dim X \ge 2$ and $p_g(X) \ge 2$. By \cite[Corollary 1.9]{Zhang_Clifford}, we have the following Severi inequality
\begin{equation} \label{eq: Severi q>1}
	K_X^3 \ge 4\chi(\omega_X)
\end{equation}
which will be useful in the proof. Let $\pi: X' \to X$ be a birational modification such that $X'$ is smooth and $|M| = \movable|\rounddown{\pi^*K_X}|$ is base point free. 
Then we have the following commutative diagram:
$$
\xymatrix{
	X' \ar[d]_{\pi}  \ar[rr]^{\psi} \ar[drr]^{\phi_{M}} & &  \Sigma' \ar[d]^{\tau}  \\
	X  \ar@{-->}[rr]_{\phi_{K_X}} & & \Sigma         
}
$$
where $\Sigma \subset \PP^{p_g(X) - 1}$ is the canonical image of $X$, $\phi_M: X' \to \Sigma$ is the morphism induced by $|M|$, and $X' \stackrel {\psi}\rightarrow \Sigma' \stackrel{\tau} \rightarrow \Sigma$ is the Stein factorization of $\phi_M$. Moreover, if $\dim \Sigma \ge 2$, by Bertini's theorem, we may take a smooth general member $S \in |M|$. Let $\sigma: S \to S_0$ be the contraction onto the minimal model of $S$. By Lemma \ref{lem: extension} for $\lambda = 1$, we have
\begin{equation} \label{eq: volume extension}
	4K_X^3 \ge K_{S_0}^2. \footnote{This inequality holds once the canonical image has dimension at least two and will also be used in Section \ref{section: q=1 II}.}
\end{equation}

The notation here will be used throughout this subsection.

\subsubsection{$\dim \Sigma = 3$} We first treat the case when $\dim \Sigma = 3$. Thus $p_g(X) \ge 4$. 

\begin{prop} \label{prop: albdim 2 canonical dim 3}
	Suppose that $\alb \dim X \ge 2$ and $\dim \Sigma = 3$. Then $K_X^3 \ge 2p_g(X)$.
\end{prop}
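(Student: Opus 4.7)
The plan is as follows. Since $\Sigma \subset \PP^{p_g(X)-1}$ is a non-degenerate $3$-fold, we have $p_g(X) \ge 4$ and $\deg \Sigma \ge p_g(X) - 3$. I will combine three inputs to derive $K_X^3 \ge 2p_g(X)$: an effective lower bound on $h^0(X,2K_X)$ extracted from the canonical image, Reid's Riemann--Roch formula for terminal $3$-folds, and the Severi inequality \eqref{eq: Severi q>1}.

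First, applying Lemma \ref{lem: h^0(2L)} to the base-point-free divisor $M$ on $X'$ (whose image under $\phi_M$ is $\Sigma$, of dimension three) yields $h^0(X', 2M) \ge 4p_g(X) - 6$; since $2M \le 2\pi^*K_X$, the same lower bound holds for $h^0(X, 2K_X)$. On the other hand, Reid's plurigenus formula for a minimal terminal $3$-fold of general type gives
$$
h^0(X, 2K_X) \;=\; \frac{1}{2}K_X^3 + 3\chi(\omega_X) + \ell,
$$
where $\ell \ge 0$ is the Reid basket contribution (which vanishes when $X$ is Gorenstein). Rearranging produces
$$
K_X^3 \;\ge\; 8p_g(X) - 12 - 6\chi(\omega_X) - 2\ell. \qquad (\ast)
$$
Taking the $(2,3)$-weighted combination of $(\ast)$ and the Severi inequality $K_X^3 \ge 4\chi(\omega_X)$ from \eqref{eq: Severi q>1} eliminates $\chi(\omega_X)$ and produces $5K_X^3 \ge 16p_g(X) - 24 - 4\ell$. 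If $X$ is Gorenstein, then $\ell = 0$ and $K_X^3 \ge (16p_g(X) - 24)/5 \ge 2p_g(X)$, the last inequality being equivalent to $p_g(X) \ge 4$, completing the proof in this case.

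The main obstacle is the general terminal case, where one must establish $\ell \le (3p_g(X) - 12)/2$. I plan to achieve this by exploiting the sharper form of Lemma \ref{lem: h^0(2L)}, namely $h^0(X', 2M) \ge 3p_g(X) - 3 + \min\{\deg\Sigma,\, 2p_g(X) - 7\}$, and splitting into subcases according to the size of $\deg \Sigma$. When $\deg\Sigma \ge 2p_g(X) - 7$, the stronger estimate $h^0(X, 2K_X) \ge 5p_g(X) - 10$ significantly relaxes the required bound on $\ell$. When $\deg\Sigma$ is close to its minimal value $p_g(X) - 3$, the classification of varieties of minimal degree forces $\Sigma$ to be rational, which, combined with $X$ being of general type, forces the generically finite map $\phi_M$ to have degree at least $2$; hence $M^3 \ge 2\deg\Sigma$, providing a direct bound $K_X^3 \ge 2p_g(X) - 6$ that, together with additional Reid--Riemann--Roch identities at $P_3$ and $P_4$ constraining the basket, should close the remaining gap.
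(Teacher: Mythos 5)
Your Gorenstein branch is fine: the combination of $h^0(X,2K_X)\ge 4p_g(X)-6$ from Lemma \ref{lem: h^0(2L)}, the plurigenus formula with $\ell=0$, and the Severi inequality \eqref{eq: Severi q>1} does give $5K_X^3\ge 16p_g(X)-24\ge 10p_g(X)$ for $p_g(X)\ge 4$. But the proposition is asserted for an arbitrary minimal $X$ with terminal singularities (that is the standing hypothesis of the section), and there your argument has a genuine gap: everything reduces to the bound $\ell\le \frac{3p_g(X)-12}{2}$ on the Reid basket contribution, and you never prove it. Nothing in your sketch can deliver it. The refined estimate for large $\deg\Sigma$ only relaxes the required bound without producing one; the minimal-degree analysis covers only $\deg\Sigma=p_g(X)-3$ (and "close to minimal" degrees include non-rational targets, e.g.\ cones over elliptic curves), leaving the whole middle range of $\deg\Sigma$ untouched and, even where it applies, yielding only $K_X^3\ge 2p_g(X)-6$; and the identities for $P_3$, $P_4$ relate plurigenera to $K_X^3$, $\chi$ and further nonnegative basket terms of the same nature, so they constrain $\ell$ only if you already have upper bounds on those plurigenera, which you do not. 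A priori $\ell$ grows with the number of basket points and admits no bound in terms of $p_g(X)$ alone; controlling such contributions is essentially the difficulty the whole paper is designed to avoid.

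The paper's proof circumvents Reid--Riemann--Roch entirely by working on the smooth model: for a general $S\in|M|$ (nef and big since $\dim\Sigma=3$), Kawamata--Viehweg vanishing and the sequence $0\to\CO_{X'}(K_{X'})\to\CO_{X'}(K_{X'}+S)\to\CO_S(K_S)\to 0$ give $\chi(\omega_X)+\chi(\omega_S)=h^0(X',K_{X'}+S)\ge h^0(X',2S)\ge 4p_g(X)-6$, converting the quadric-counting estimate into a bound on $\chi(\omega_X)+\chi(\omega_S)$ rather than on $h^0(X,2K_X)$, so no singular Riemann--Roch is ever invoked. The excess is then absorbed by a second use of irregularity: $S$ is an irregular surface with $\alb\dim S=2$, so Pardini's Severi inequality $K_{S_0}^2\ge 4\chi(\omega_S)$ together with the extension-theorem bound \eqref{eq: volume extension} gives $K_X^3\ge\chi(\omega_S)$, and combining with \eqref{eq: Severi q>1} yields $K_X^3\ge\frac{4}{5}\left(\chi(\omega_X)+\chi(\omega_S)\right)\ge 2p_g(X)$. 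Note that the paper uses the hypothesis $\alb\dim X\ge 2$ twice (threefold Severi and surface Severi), whereas your scheme uses it only once; that structural difference is precisely why your route lacks the strength to close the non-Gorenstein case.
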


\begin{proof} 
	Since $\phi_M$ is generically finite, we know that $S$ is a nef and big divisor. By the Kawamata-Viehweg vanishing theorem, $h^i(X', K_{X'} + S) = 0$ for all $i > 0$. Together with the short exact sequence
	\begin{equation} \label{eq: ses}
		0 \to \CO_{X'}(K_{X'}) \to \CO_{X'}(K_{X'} + S) \to \CO_S(K_S) \to 0,
	\end{equation}
	it follows that
	$$
	\chi(\omega_X) + \chi(\omega_S) = \chi\left(\CO_{X'}(K_{X'} + S)\right) = h^0(X', K_{X'} + S) \ge h^0(X', 2S).
	$$
	By Lemma \ref{lem: h^0(2L)}, $h^0(X', 2S) \ge 4h^0(X', S) - 6 = 4p_g(X) - 6$. Thus
	\begin{equation} \label{eq: albdim 2 canonical dim 3 chi}
		\chi(\omega_X) + \chi(\omega_S) \ge 4p_g(X) - 6.
	\end{equation}
	
	Since $S$ is a big divisor, we know that $\alb \dim S = 2$ and the Severi inequality $K_{S_0}^2 \ge 4\chi(\omega_S)$ holds \cite[Theorem 2.1]{Pardini}. Together with \eqref{eq: volume extension}, we have
	\begin{equation} \label{eq: Severi extension}
		K_X^3 \ge \chi(\omega_S).
	\end{equation}
	Combine the above inequalities with \eqref{eq: Severi q>1}. It follows that
	$$
	K_X^3 \ge \frac{4}{5} \left(\chi(\omega_X) + \chi(\omega_S) \right) \ge \frac{16}{5}p_g(X) - \frac{24}{5} \ge 2p_g(X),
	$$
	where the last inequality holds because $p_g(X) \ge 4$. Thus the proof is completed.
\end{proof}

\subsubsection{$\dim \Sigma = 2$} \label{subsubsection: albdim 2 canonical dim 2} Here we treat the case when $\dim \Sigma = 2$. Thus $p_g(X) \ge 3$. Denote by $C$ a general fibre of $\psi$. Then $C$ is a smooth curve of genus $g(C) \ge 2$. Up to a birational modification, we may assume that both $X'$ and $\Sigma'$ are smooth. Now $S = \phi_M^*H$ for a general hyperplane section $H$ on $\Sigma$, and it is no longer a big divisor. Write $H' = \tau^*H$. We may assume $H'$ is smooth. Let $d = (\deg \tau) \cdot (\deg \Sigma)$. Then $M|_S \equiv dC$. Since $\Sigma$ is non-degenerate, we have 
$$
\deg \Sigma \ge p_g(X) - 2.
$$

\begin{lemma} \label{lem: albdim 2 vanishing}
	We have $h^i(\Sigma', R^j \psi_* \CO_{X'}(K_{X'} + S)) = 0$ for all $i > 0$ and $j \ge 0$. Moreover, $h^2(X', K_{X'} + S) = 0$.
\end{lemma}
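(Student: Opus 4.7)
The plan is to reduce both claims to the key observation that $S = \psi^{*}H'$, where $H' = \tau^{*}H$ is the pullback of the ample hyperplane section $H$ on $\Sigma \subset \PP^{p_g(X)-1}$, and that $H'$ is itself ample on the smooth surface $\Sigma'$ because $\tau$ is a finite morphism (the Stein factorization of $\phi_M$). Once $S$ is realized as the pullback of an ample divisor, everything on $X'$ twisted by $S$ becomes a pullback, and the projection formula together with Koll\'ar's vanishing theorem handle the higher cohomology.

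First I would record the identity $\CO_{X'}(K_{X'}+S) \cong \omega_{X'} \otimes \psi^{*}\CO_{\Sigma'}(H')$ and apply the projection formula to obtain
$$
R^{j}\psi_{*}\CO_{X'}(K_{X'}+S) \;\cong\; R^{j}\psi_{*}\omega_{X'} \otimes \CO_{\Sigma'}(H')
$$
for every $j \ge 0$. Because $\psi$ is a surjective morphism from a smooth projective variety and $H'$ is ample on the smooth projective $\Sigma'$, Koll\'ar's vanishing theorem applied to $R^{j}\psi_{*}\omega_{X'}$ yields
$$
H^{i}\!\left(\Sigma',\, R^{j}\psi_{*}\omega_{X'} \otimes \CO_{\Sigma'}(H')\right) = 0
\quad \text{for every } i > 0,\ j \ge 0,
$$
which is exactly the first assertion.

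For the second assertion I would feed this vanishing into the Leray spectral sequence
$$
E_{2}^{i,j} = H^{i}\!\left(\Sigma',\, R^{j}\psi_{*}\CO_{X'}(K_{X'}+S)\right) \ \Longrightarrow\ H^{i+j}(X',\, K_{X'}+S).
$$
Since all rows with $i > 0$ vanish by the first part, the sequence degenerates along the column $i=0$, giving $H^{2}(X', K_{X'}+S) \cong H^{0}(\Sigma',\, R^{2}\psi_{*}\CO_{X'}(K_{X'}+S))$. Because $\psi$ has relative dimension one, $R^{j}\psi_{*}\mathcal{F} = 0$ for any coherent $\mathcal{F}$ and $j \ge 2$, so $R^{2}\psi_{*}\CO_{X'}(K_{X'}+S) = 0$ and therefore $h^{2}(X',K_{X'}+S) = 0$.

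The only step requiring care, and the one I view as the main obstacle, is the justification that $H'$ is ample on $\Sigma'$ rather than merely nef and big: this is needed so that Koll\'ar's vanishing applies without any birational hypothesis on $H'$. This ampleness follows formally from $H$ being ample on $\Sigma$ and $\tau$ being finite, but it is the pivot of the whole argument. Everything else is routine manipulation with the projection formula, Leray, and the relative dimension of $\psi$.
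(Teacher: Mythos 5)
Your skeleton matches the paper's proof exactly (projection formula, Koll\'ar-type vanishing, Leray spectral sequence, and killing $R^2\psi_*$), but two of your justifications fail in the setting where this lemma actually lives. First, the ampleness of $H'$, which you yourself single out as the pivot: in \S\ref{subsubsection: albdim 2 canonical dim 2} the paper has replaced $\Sigma'$ by a smooth model, so $\tau\colon \Sigma'\to\Sigma$ is no longer the finite part of a Stein factorization but only a generically finite morphism (a resolution composed with the finite map). Consequently $H'=\tau^*H$ may have degree zero on curves contracted by $\tau$ and is in general only nef and big, not ample. This defect is repairable: the version of Koll\'ar's vanishing the paper invokes, \cite[Corollary 10.15]{Kollar_Shafarevich}, holds for nef and big twists, which is precisely why the paper's proof begins ``Since $H'$ is nef and big''; but your argument as written rests on a finiteness of $\tau$ that is not available in this setup.

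Second, and more seriously, your final step asserts that $R^j\psi_*\CF=0$ for \emph{every} coherent sheaf $\CF$ and $j\ge 2$ because ``$\psi$ has relative dimension one.'' The fibre-dimension vanishing theorem requires a bound on the dimension of \emph{all} fibres, not just the general one; $\psi$ is a morphism from a smooth $3$-fold to a surface produced by birational modifications, and nothing rules out $2$-dimensional fibres (divisors contracted to points), in which case $R^2\psi_*\CF\neq 0$ for suitable $\CF$. What is true, and what the paper uses, is Koll\'ar's theorem \cite[Theorem 2.1]{Kollar_Higher_direct_image} that $R^j\psi_*\omega_{X'}=0$ for $j>\dim X'-\dim\Sigma'=1$, a statement special to dualizing sheaves that is insensitive to special fibres. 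Combined with your own projection-formula identity $R^2\psi_*\CO_{X'}(K_{X'}+S)\cong R^2\psi_*\omega_{X'}\otimes\CO_{\Sigma'}(H')$, this closes the gap. So both defects are fixable with the correct citations, but as written the proof contains a genuine gap at the $R^2$ step and appeals to an unavailable hypothesis at the ampleness step.
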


\begin{proof}
	Since $H'$ is nef and big, the first vanishing follows from \cite[Corollary 10.15]{Kollar_Shafarevich}. Moreover, it implies that $h^2(X', K_{X'} + S) = h^0(\Sigma', R^2 \psi_* \omega_{X'} \otimes \CO_{\Sigma'}(H'))$ by the Leray spectral sequence. Since $R^2 \psi_* \omega_{X'} = 0$ \cite[Theorem 2.1]{Kollar_Higher_direct_image}, the second result follows.
\end{proof}

\begin{lemma} \label{lem: albdim 2 canonical dim 2 q}
	We have $q(S) \ge q(X)$.
\end{lemma}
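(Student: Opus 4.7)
The plan is to prove the injectivity of the restriction map
$$H^1(X',\CO_{X'}) \longrightarrow H^1(S,\CO_S),$$
which will immediately give $q(S)\ge q(X')=q(X)$ (the last equality because $X$ has terminal, hence rational, singularities, so $q(X)=q(X')$).

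First, I would write down the short exact sequence associated to the smooth divisor $S\subset X'$,
$$0 \longrightarrow \CO_{X'}(-S) \longrightarrow \CO_{X'} \longrightarrow \CO_S \longrightarrow 0,$$
and look at the induced long exact sequence in cohomology:
$$H^1(X',\CO_{X'}(-S)) \longrightarrow H^1(X',\CO_{X'}) \longrightarrow H^1(S,\CO_S).$$
Thus it suffices to show $H^1(X',\CO_{X'}(-S))=0$. By Serre duality on the smooth projective threefold $X'$,
$$H^1(X',\CO_{X'}(-S))^{*} \cong H^2(X', K_{X'}+S),$$
and the right-hand side vanishes by Lemma \ref{lem: albdim 2 vanishing}. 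This yields the desired injectivity, and hence $q(S)\ge q(X)$.

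There is no real obstacle here: the entire argument is a one-line dualization of the previously proved vanishing $h^2(X',K_{X'}+S)=0$. The only point to be careful about is distinguishing $q(X)$ from $q(X')$, which is harmless since $X$ has $\QQ$-factorial terminal singularities and any resolution satisfies $R^i\pi_{*}\CO_{X'}=0$ for $i>0$, so irregularity is preserved.
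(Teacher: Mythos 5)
Your proof is correct and is essentially the paper's own argument in Serre-dual form: the paper applies the long exact sequence of $0 \to \CO_{X'}(K_{X'}) \to \CO_{X'}(K_{X'}+S) \to \CO_S(K_S) \to 0$ to get a surjection $H^1(S, K_S) \to H^2(X', K_{X'})$ from the same vanishing $h^2(X', K_{X'}+S)=0$ of Lemma \ref{lem: albdim 2 vanishing}, which is exactly dual to your injectivity of $H^1(X',\CO_{X'}) \to H^1(S,\CO_S)$. Your side remark that $q(X)=q(X')$ by rationality of terminal singularities is also implicitly used in the paper (via $q(X)=h^2(X',K_{X'})$), so there is no gap.
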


\begin{proof}
	From \eqref{eq: ses}, we obtain the following exact sequence:
	$$
	H^1(S, K_S) \to H^2(X', K_{X'}) \to H^2(X', K_{X'}+S).
	$$
	Note that $q(S) = h^1(S, K_S)$ and $q(X) = h^2(X', K_{X'})$. It follows from Lemma \ref{lem: albdim 2 vanishing} that $q(S) \ge q(X)$.
\end{proof}

\begin{lemma} \label{lem: albdim 2 canonical dim 2 chi}
	We have
	$$
	\chi(\omega_X) + \chi(\omega_S) \ge 2p_g(X) + \min\{\deg \Sigma, 2p_g(X) - 5\} - h^0(\Sigma', K_{\Sigma'} + H') - 1.
	$$
\end{lemma}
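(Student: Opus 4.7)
The plan is to compute $\chi(\omega_{X'}(S)) = \chi(\omega_X) + \chi(\omega_S)$ (using the sequence \eqref{eq: ses} together with $\chi(\omega_{X'}) = \chi(\omega_X)$ for the terminal threefold $X$) as $h^0(X', K_{X'}+S) - h^1(X', K_{X'}+S)$, after killing the higher cohomology. Indeed, $h^2(X', K_{X'}+S) = 0$ by Lemma \ref{lem: albdim 2 vanishing}, and $h^3(X', K_{X'}+S) = h^0(X', -S) = 0$ by Serre duality since $S$ is a nonzero effective divisor. The strategy then reduces to a lower bound for $h^0(X', K_{X'}+S)$ and an upper bound for $h^1(X', K_{X'}+S)$.

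For the lower bound, observe that $K_{X'} - M$ is linearly equivalent to an effective divisor: writing $K_{X'} = \pi^*K_X + E_\pi$ with $E_\pi$ effective (terminal discrepancies) and $\rounddown{\pi^*K_X} = M + Z$ with $Z \ge 0$, we have $K_{X'} - M = Z + \{\pi^*K_X\} + E_\pi \ge 0$. Thus $h^0(X', K_{X'}+S) \ge h^0(X', 2M)$. Since $\phi_M$ is the canonical map of $X$ up to the modification $\pi$, its image is $\Sigma$ of dimension two and $h^0(X', M) = p_g(X)$. Applying Lemma \ref{lem: h^0(2L)} with $L = M$ gives
$$h^0(X', 2M) \ge 2p_g(X) - 1 + \min\{\deg \Sigma, 2p_g(X) - 5\}.$$

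For the upper bound, Lemma \ref{lem: albdim 2 vanishing} and the Leray spectral sequence give $h^1(X', K_{X'}+S) = h^0(\Sigma', R^1\psi_*\omega_{X'}(S))$, and since $S \sim \psi^*H'$, the projection formula yields $R^1\psi_*\omega_{X'}(S) = R^1\psi_*\omega_{X'} \otimes \CO_{\Sigma'}(H')$. The crucial step is to produce an injection $R^1\psi_*\omega_{X'} \hookrightarrow \omega_{\Sigma'}$. By Koll\'ar's torsion-freeness theorem, $R^1\psi_*\omega_{X'/\Sigma'}$ is torsion-free on the smooth surface $\Sigma'$, of generic rank one since the general fibre of $\psi$ is a smooth curve $C$ with $H^1(C,\omega_C) \cong \CC$. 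The trace map $R^1\psi_*\omega_{X'/\Sigma'} \to \CO_{\Sigma'}$ arising from relative duality is generically an isomorphism between torsion-free sheaves of the same rank, hence injective. Twisting by $\omega_{\Sigma'}$ yields the desired embedding, and consequently $h^1(X', K_{X'}+S) \le h^0(\Sigma', K_{\Sigma'}+H')$. Combining with the lower bound produces
$$\chi(\omega_X) + \chi(\omega_S) \ge 2p_g(X) - 1 + \min\{\deg \Sigma, 2p_g(X) - 5\} - h^0(\Sigma', K_{\Sigma'}+H'),$$
which is precisely the desired inequality. The main technical obstacle is the careful handling of the trace injection when $\psi$ fails to be equidimensional; however, reflexivity arguments on the smooth surface $\Sigma'$, together with the generic rank-one computation on the smooth fibre, make this manageable.
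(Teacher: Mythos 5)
Your proof is correct and takes essentially the same route as the paper: both compute $\chi(\omega_X) + \chi(\omega_S) = \chi(\CO_{X'}(K_{X'}+S))$ via \eqref{eq: ses}, kill the higher cohomology using Lemma \ref{lem: albdim 2 vanishing}, bound $h^0(X', K_{X'}+S)$ below by $h^0(X', 2S)$ and Lemma \ref{lem: h^0(2L)}, and bound $h^1(X', K_{X'}+S)$ above by $h^0(\Sigma', K_{\Sigma'}+H')$ through the Leray spectral sequence. The only difference is in one sub-step: the paper directly cites Koll\'ar's identification $R^1\psi_*\omega_{X'} = \omega_{\Sigma'}$, while you rederive the weaker (but sufficient) injection $R^1\psi_*\omega_{X'} \hookrightarrow \omega_{\Sigma'}$ from the global trace map and Koll\'ar's torsion-freeness, which is a legitimate alternative justification of the same bound.
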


\begin{proof}
	As in the proof of Proposition \ref{prop: albdim 2 canonical dim 3}, we have
	$$
	\chi(\omega_X) + \chi(\omega_S) = \chi\left(\CO_{X'}(K_{X'} + S)\right).
	$$
	By \cite[Proposition 7.6]{Kollar_Higher_direct_image}, $R^1 \psi_* \omega_{X'} = \omega_{\Sigma'}$. Applying the Leray spectral sequence and Lemma \ref{lem: albdim 2 vanishing}, we deduce that
	\begin{align*}
		\chi\left(\CO_{X'}(K_{X'} + S)\right) & = h^0(X', K_{X'} + S) - h^1(X', K_{X'} + S) \\
		& = h^0(X', K_{X'} + S) - h^0\left(\Sigma', R^1 \psi_* \omega_{X'} \otimes \CO_{\Sigma'}(H') \right) \\
		& \ge h^0(X', 2S) - h^0(\Sigma', K_{\Sigma'} + H').
	\end{align*}
	Moreover, since $p_g(X) = h^0(X', S)$, by Lemma \ref{lem: h^0(2L)}, we have
	$$
	h^0(X', 2S) \ge 2p_g(X) - 1 + \min\{\deg \Sigma, 2p_g(X) - 5\}.
	$$
	Thus the proof is completed.
\end{proof}

\begin{lemma} \label{lem: albdim 2 canonical dim 2 case 1}
	Suppose that $\alb \dim X \ge 2$ and $\dim \Sigma = 2$. Then 
	$$
	K_X^3 \ge \min \left\{2p_g(X), 4p_g(X) - 8, 3p_g(X) - \rounddown{\frac{p_g(X) - 1}{2}} - 3\right\}
	$$
	provided one of the following holds:
	\begin{itemize}
		\item [(i)] $g(H') = 0$;
		\item [(ii)] $g(H') = 1$ and $\deg \Sigma = p_g(X) - 1 \ge 3$.
	\end{itemize}
\end{lemma}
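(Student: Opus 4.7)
My approach combines the three volume-type inequalities available---the Severi inequality \eqref{eq: Severi q>1} $K_X^3 \ge 4\chi(\omega_X)$, the Kawamata extension estimate \eqref{eq: volume extension} $4K_X^3 \ge K_{S_0}^2$, and a Severi/slope inequality for the smooth section $S$---with Lemma~\ref{lem: albdim 2 canonical dim 2 chi} and a geometric bound on $h^0(\Sigma',K_{\Sigma'}+H')$ coming from the assumptions (i) and (ii).

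By Lemma~\ref{lem: albdim 2 canonical dim 2 q}, $q(S)\ge q(X)\ge 2$, so $S$ is an irregular surface of general type. If $\alb\dim S = 2$ I would invoke Pardini's Severi inequality to obtain $K_{S_0}^2 \ge 4\chi(\omega_S)$; if $\alb\dim S = 1$, the Albanese fibration of $S$ lies over a curve of genus $\ge 2$, so Xiao's slope inequality together with Fujita semipositivity yields a comparable estimate. Combining this with \eqref{eq: Severi q>1} and \eqref{eq: volume extension}, then inserting Lemma~\ref{lem: albdim 2 canonical dim 2 chi}, together with the non-degeneracy bound $\deg\Sigma \ge p_g(X)-2$, and splitting the minimum $\min\{\deg\Sigma, 2p_g(X)-5\}$ into its two branches, one obtains two candidate lower bounds for $K_X^3$ of the shapes $2p_g(X)$ (when the minimum equals $2p_g(X)-5$) and $4p_g(X)-8$ (when $\deg\Sigma$ is large). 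These furnish the first two terms of the claimed minimum.

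The main obstacle is the geometric bound on $h^0(\Sigma',K_{\Sigma'}+H')$. From the restriction sequence $0 \to \CO_{\Sigma'}(K_{\Sigma'}) \to \CO_{\Sigma'}(K_{\Sigma'}+H') \to \omega_{H'} \to 0$ one obtains the elementary estimate $h^0(\Sigma',K_{\Sigma'}+H') \le p_g(\Sigma')+g(H')$. In case (i), $g(H')=0$ forces $(K_{\Sigma'}+H')\cdot H' = -2 < 0$, so $\Sigma'$ is birationally ruled over a rational base; this kills $p_g(\Sigma')$ and makes the correction term negligible. In case (ii), $g(H')=1$ together with $\deg\Sigma = p_g(X)-1$ identifies $\Sigma$ as a surface whose general hyperplane sections are elliptic normal curves of degree one above the minimum; a classification of such surfaces (scrolls, cones, and del Pezzo-type surfaces) bounds $h^0(\Sigma',K_{\Sigma'}+H')$ by $\rounddown{(p_g(X)-1)/2}$, yielding the third candidate $3p_g(X) - \rounddown{(p_g(X)-1)/2} - 3$. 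Taking the best available bound in each configuration completes the proof.
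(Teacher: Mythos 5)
Your ingredient list (Severi inequality \eqref{eq: Severi q>1}, the extension estimate \eqref{eq: volume extension}, Lemma \ref{lem: albdim 2 canonical dim 2 chi}, and the vanishing of $p_g(\Sigma')$ via adjunction) matches the paper's, but two steps at the heart of the argument are missing or would fail. First, the floor term. The paper does not obtain $\rounddown{\frac{p_g(X)-1}{2}}$ from any classification of surfaces with elliptic hyperplane sections; it opens with the dichotomy: either $K_X^3 \ge 2p_g(X)$ (the first branch, and we are done), or $K_X^3 < 2p_g(X)$, in which case \eqref{eq: Severi q>1} forces $\chi(\omega_X) \le \rounddown{\frac{p_g(X)-1}{2}}$. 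In \emph{both} cases (i) and (ii), adjunction gives $(K_{\Sigma'}\cdot H') < 0$, hence $p_g(\Sigma') = 0$ and $h^0(\Sigma', K_{\Sigma'}+H') \le g(H') \le 1$ — in case (ii) the bound is simply $1$, so your classification detour is both unnecessary and quantitatively weaker. Lemma \ref{lem: albdim 2 canonical dim 2 chi} then gives $\chi(\omega_X)+\chi(\omega_S) \ge 3p_g(X)-3$, and when $\alb\dim S = 2$ one concludes $K_X^3 \ge \chi(\omega_S) \ge 3p_g(X)-\chi(\omega_X)-3 \ge 3p_g(X)-\rounddown{\frac{p_g(X)-1}{2}}-3$ using \eqref{eq: Severi extension} and the Severi bound on $\chi(\omega_X)$. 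Your route — combining $K_X^3 \ge 4\chi(\omega_X)$ and $K_X^3 \ge \chi(\omega_S)$ directly — yields at best $K_X^3 \ge \frac{4}{5}\left(3p_g(X)-3\right)$, which at $p_g(X)=4$ gives $7.2$ while the claimed minimum is $8$; without the dichotomy the stated bound is out of reach, and there is no way to manufacture the floor term from $h^0(\Sigma', K_{\Sigma'}+H')$.

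Second, the case $\alb\dim S = 1$ cannot be handled by "Xiao's slope inequality together with Fujita semipositivity." For the Albanese fibration $h: S \to B$ of a surface with $\alb\dim S = 1$ one has $q(S) = g(B)$, so the slope-$4$ bound of \cite[Corollary 1]{Xiao} (which requires $q(S) > g(B)$, and which the paper uses only in Lemma \ref{lem: albdim 2 canonical dim 2 case 2}, for $\psi|_S$ when $\alb\dim S = 2$) is unavailable. The unconditional slope inequality $K_{S_0/B}^2 \ge \left(4-\frac{4}{g(D)}\right)\deg h_*\omega_{S/B}$ gives only $K_{S_0}^2 \ge \left(4-\frac{4}{g(D)}\right)\chi(\omega_S)$ plus a bounded correction; with $g(D)=2$ this yields, after \eqref{eq: volume extension}, roughly $K_X^3 \ge \frac{1}{2}\chi(\omega_S)$, far below every branch of the minimum. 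The paper's actual argument here is intersection-theoretic and is precisely where the branch $4p_g(X)-8$ comes from (not, as you suggest, from $\deg\Sigma$ being large in the $\chi$-lemma): since $g(H') \le 1 < 2 \le g(D)$, the Albanese fibre $D$ is neither a fibre nor a section of $\psi|_S$, so $(D\cdot C)\ge 2$; since $K_S - 2M|_S$ is effective and $M|_S \equiv dC$, one gets $\left((\sigma^*K_{S_0})\cdot D\right) = (K_S\cdot D) \ge 4d$; and nefness of $K_{S_0/B}$ with $g(B)\ge 2$ gives $K_{S_0}^2 \ge 4\left(g(B)-1\right)\left(K_{S_0}\cdot \sigma_*D\right) \ge 16d$, whence $K_X^3 \ge 4d \ge 4p_g(X)-8$ by \eqref{eq: volume extension} and $d \ge \deg\Sigma \ge p_g(X)-2$. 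Without this step your proof does not close.
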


\begin{proof}
	Suppose that $K_X^3 < 2p_g(X)$. By \eqref{eq: Severi q>1}, we may assume that $2\chi(\omega_X) + 1 \le p_g(X)$. Since $H'$ is nef and big with $g(H') \le 1$, by the adjunction, we have $(K_{\Sigma'} \cdot H') < 0$. Thus $p_g(\Sigma') = 0$. By the following short exact sequence 
	$$
	0\to \CO_{\Sigma'}(K_{\Sigma'}) \to \CO_{\Sigma'}(K_{\Sigma'}+H')\to K_{H'}\to 0,
	$$
	we deduce that $h^0(\Sigma', K_{\Sigma'}+H') \le g(H')$. 
	Thus by Lemma \ref{lem: albdim 2 canonical dim 2 chi}, we deduce that under both assumptions (i) and (ii),
	$$
	\chi(\omega_S) \ge 3p_g(X) - \chi(\omega_X) - 3 \ge 3p_g(X) - \rounddown{\frac{p_g(X) - 1}{2}} - 3.
	$$
	
	Suppose that $\alb \dim S = 2$. Then \eqref{eq: Severi extension} also holds here. Thus we have
	$$
	K_X^3 \ge \chi(\omega_S) \ge 3p_g(X) - \rounddown{\frac{p_g(X) - 1}{2}} - 3.
	$$ 
	Suppose that $\alb \dim S < 2$. By Lemma \ref{lem: albdim 2 canonical dim 2 q}, $q(S) \ge 2$. Let $h: S \to B$ be the Albanese fibration of $S$ with general fibre $D$. Then $g(B) \ge 2$. In particular, $h$ descends to a fibration $h_0: S_0 \to B$ via $\sigma$. Since $g(H')\le 1$ and $g(D) \ge 2$, $D$ is neither a fibre nor a section of $\psi|_S: S \to H'$. Thus $(D \cdot C) \ge 2$. Note that $M|_S \equiv dC$. We deduce that
	$$
	\left( (\sigma^*K_{S_0}) \cdot D\right) = (K_S \cdot D) \ge \left((K_{X'} + M)|_S \cdot D \right) \ge 2(M|_S \cdot D) \ge 4d.
	$$
	Moreover, since $K_{S_0/B}$ is nef, we deduce that
	$$
	K_{S_0}^2 = K_{S_0/B}^2 + 4 \left(g(B) - 1\right) \left( (\sigma^*K_{S_0}) \cdot D\right) \ge 16d.
	$$
	Thus by \eqref{eq: volume extension}, we have
	$$
	K_X^3 \ge \frac{1}{4} K_{S_0}^2 \ge 4d \ge 4p_g(X) - 8.
	$$
	The proof is completed.
\end{proof}

\begin{lemma} \label{lem: albdim 2 canonical dim 2 case 2}
	Suppose that $\alb \dim X \ge 2$, $\dim \Sigma = 2$ and $g(H') \ge 1$. Then $K_X^3 \ge 2d$.
\end{lemma}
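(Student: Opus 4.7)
The strategy is to pass to the smooth surface $S$, use that $(\pi^*K_X)|_S$ is nef on $S$ and dominates the fibre class $dC$ of $\psi|_S \colon S \to H'$, and then show that $\pi^*K_X$ has degree at least $2$ on a general fibre $C$ of $\psi$.

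\textbf{Step 1 (reduction to $\pi^*K_X \cdot C \ge 2$).} Because $\pi^*K_X$ is nef on $X'$ and $\pi^*K_X - M$ is effective, nef-times-effective non-negativity of the difference $(\pi^*K_X)^3 - (\pi^*K_X)^2 \cdot M = (\pi^*K_X)^2 \cdot (\pi^*K_X - M)$ gives
\[
K_X^3 = (\pi^*K_X)^3 \ge (\pi^*K_X)^2 \cdot M = ((\pi^*K_X)|_S)^2.
\]
Moreover, since $(\pi^*K_X)|_S$ is nef on $S$ and $(\pi^*K_X)|_S - M|_S$ is effective, the same reasoning yields $((\pi^*K_X)|_S)^2 \ge (\pi^*K_X)|_S \cdot M|_S$. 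As $M|_S \equiv dC$, we conclude $K_X^3 \ge d \cdot (\pi^*K_X \cdot C)$.

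\textbf{Step 2 (adjunction plus isolation of singularities).} Since $C$ is a general fibre of $\psi|_S$, we have $C^2 = 0$ on $S$, and $S \cdot C = M \cdot C = 0$ on $X'$. Double adjunction through $C \subset S \subset X'$ then gives $K_{X'} \cdot C = \deg K_C = 2g(C) - 2 \ge 2$, where $g(C) \ge 2$ is recorded at the start of the subsection. Now write $K_{X'} = \pi^*K_X + E$ with $E \ge 0$ a $\pi$-exceptional $\QQ$-divisor; it remains to show $E \cdot C = 0$. Because $X$ is $\QQ$-factorial terminal of dimension three, $\mathrm{Sing}(X)$ is a finite set of points, and $\pi$ restricts to an isomorphism over $X \setminus \mathrm{Sing}(X)$. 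For $C$ a general fibre of $\psi$, the curve $\pi(C) \subset X$ belongs to the two-parameter family of fibres of $\phi_{K_X}$ covering $X$, so a general such $\pi(C)$ avoids the finitely many singular points. Then $C$ lies in $\pi^{-1}(X \setminus \mathrm{Sing}(X))$ and $E \cdot C = 0$, giving $\pi^*K_X \cdot C = K_{X'} \cdot C \ge 2$.

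\textbf{Step 3 (conclusion).} Combining Steps 1 and 2 produces $K_X^3 \ge 2d$, as desired. The only subtle point is the exceptional vanishing $E \cdot C = 0$, which relies crucially on the isolation of three-dimensional terminal singularities so that a generic member of the two-parameter family of fibres $C$ misses them entirely; without this input, one would have to argue more carefully that no $\pi$-exceptional divisor dominates $\Sigma'$ under $\psi$.
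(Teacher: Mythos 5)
Your Step 1 is fine, and your adjunction computation $K_{X'} \cdot C = 2g(C) - 2$ in Step 2 is also correct. But the key claim of Step 2 --- that $E \cdot C = 0$ because ``$\pi$ restricts to an isomorphism over $X \setminus \mathrm{Sing}(X)$'' --- is false for the $\pi$ of this paper, and this is a genuine gap. The modification $\pi: X' \to X$ is chosen so that $|M| = \movable|\rounddown{\pi^*K_X}|$ is base point free; to achieve this one must blow up the base locus of the movable part of $|K_X|$, whose centers (curves and points) typically lie in the \emph{smooth} locus of $X$. Since $X$ is terminal, every such exceptional divisor appears in $E = K_{X'} - \pi^*K_X$ with positive coefficient. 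So the relevant question is not whether a general fibre $C$ of $\psi$ avoids the finitely many singular points of $X$ (it does), but whether any component of $E$ \emph{dominates} $\Sigma'$ under $\psi$ --- and if one does, then $E \cdot C > 0$ for \emph{every} fibre $C$, and your lower bound $\pi^*K_X \cdot C \ge 2$ evaporates (a priori one only gets $\pi^*K_X \cdot C \ge M \cdot C = 0$). Nothing in the hypotheses rules this out: $\Sigma'$ is a Stein-factorized canonical image and can perfectly well be a rational surface covered by images of exceptional divisors. Your closing sentence correctly identifies this as the crux, but the ``input'' you invoke to dispatch it does not hold.

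It is instructive that the paper's own proof confirms the gap is real: it establishes $(E_\pi \cdot C) = 0$ \emph{only} in the sub-case $\alb\dim S < 2$, where $h = \psi|_S$ is forced to be the Albanese fibration of $S$, the Albanese map of $X'$ factors through $\psi$, and hence $\Sigma'$ is generically finite over an abelian variety; then, since $E_\pi$ is covered by rational curves (Hacon--McKernan), no component of $E_\pi$ can dominate $\Sigma'$, which is exactly the statement you need and cannot get from isolatedness of terminal singularities. In the complementary (and main) case $\alb\dim S = 2$, the paper abandons the intersection-theoretic route entirely: it applies Xiao's slope inequality $K_{S_0/H'}^2 \ge 4 \deg h_* \omega_{S/H'}$ to the fibration $h: S \to H'$, bounds $\deg h_* \omega_{S/H'} \ge 2d - 2(g(H')-1)$ via the subsheaf $h_*\CO_S(2M|_S) \otimes \omega_{H'}^{-1}$ (using adjunction $K_S - 2M|_S \ge 0$ and Fujita's nefness of $h_*\omega_{S/H'}$), and concludes with the Kawamata-extension estimate $4K_X^3 \ge K_{S_0}^2$ from \eqref{eq: volume extension}. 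To repair your argument you would need either to supply the missing non-domination statement for $E$ in the case $\alb\dim S = 2$ (which seems out of reach) or to switch to a slope-inequality argument of this kind there.
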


\begin{proof}
	Since $g(H') \ge 1$, the fibration $h=\psi|_S: S \to H'$ naturally descends to a fibration $h_0: S_0 \to H'$ via $\sigma$. We still denote by $C$ a general fibre of $h_0$.
	
	Suppose first that $\alb \dim S = 2$. Then $q(S) > g(H')$. By \cite[Corollary 1]{Xiao}, we know that
	$$
	K_{S_0}^2 - 8 \left( g(C) - 1 \right) \left( g(H') - 1 \right) = K_{S_0/H'}^2 \ge 4 \deg h_* \omega_{S/H'}.
	$$
	On the other hand, by the adjunction, $K_S - 2M|_S$ is an effective divisor on $S$. Thus $h_* \CO_S(2M|_S) \otimes \omega_{H'}^{-1}$ is a subsheaf of $h_* \omega_{S/H'}$. Let $\CL$ be the saturation of $h_* \CO_S(2M|_S) \otimes \omega_{H'}^{-1}$ in $h_* \omega_{S/H'}$. Then we obtain a short exact sequence
	$$
	0 \to \CL \to h_* \omega_{S/H'} \to \CQ \to 0,
	$$
	where $\CL$ is a line bundle with 
	$$
	\deg \CL \ge \deg \left(h_* \CO_S(2M|_S) \otimes \omega_{H'}^{-1}\right) = 2d - 2 \left(g(H') - 1\right).
	$$ 
	By \cite[Theorem 2.7]{Fujita}, $h_* \omega_{S/H'}$ is nef. Thus $\deg \CQ \ge 0$. It follows that 
	$$
	\deg h_* \omega_{S/H'} \ge \deg \CL  \ge 2d - 2 \left(g(H') - 1\right).
	$$
	Combine the above inequalities with \eqref{eq: volume extension}. It follows that
	$$
	K_X^3 \ge \frac{1}{4} K_{S_0}^2 \ge 2d + \left( 2 g(C) - 4 \right)\left(g(H') - 1\right) \ge 2d. 
	$$
	
	Now suppose that $\alb \dim S < 2$. Since $g(H') \ge 1$, by the universal property of the Albanese map, we know that $h: S \to H'$ is just the Albanese fibration of $S$. Let $a': X' \to A$ be the Albanese map of $X'$. Since $a'|_S$ factors through $h$ and $S$ is general, we deduce that $\dim a'(X') = 2$, and $C$ is a general Albanese fibre of $X'$. Moreover, $a'$ factors through $\psi$. Thus $\Sigma' \to A$ is generically finite onto its image. Let $E_\pi = K_{X'} - \pi^*K_X$ be the $\pi$-exceptional divisor. Since $E_\pi$ is covered by rational curves \cite[Corollary 1.5]{Hacon_Mckernan}, we know that $(E_\pi \cdot C) = 0$. Otherwise, $E_\pi \to \Sigma'$ would be dominant, which is absurd. Thus we have $\left((\pi^*K_X) \cdot C\right) = (K_{X'} \cdot C) = 2g(C) - 2$. It follows that
	$$
	K_X^3 \ge \left((\pi^*K_X) \cdot M^2\right) = d \left((\pi^*K_X) \cdot C\right) = d \left(2g(C) - 2\right) \ge 2d.
	$$
	Thus the whole proof is completed.
\end{proof}

\begin{prop} \label{prop: albdim 2 canonical dim 2}
	Suppose that $\alb \dim X \ge 2$ and $\dim \Sigma = 2$.  
	\begin{itemize}
		\item [(1)] If $p_g(X) \ge 4$, then $K_X^3 \ge 2p_g(X)$;
		\item [(2)] If $p_g(X) = 3$, then $K_X^3 \ge 4$.
	\end{itemize}
\end{prop}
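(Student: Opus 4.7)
The plan is to combine Lemmas~\ref{lem: albdim 2 canonical dim 2 case 1} and~\ref{lem: albdim 2 canonical dim 2 case 2} via a case split on $g(H')$.

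First, suppose $g(H') = 0$. Then Lemma~\ref{lem: albdim 2 canonical dim 2 case 1}(i) applies directly. A routine numerical check shows that for $p_g(X) \ge 4$ each of the three quantities $2p_g(X)$, $4p_g(X) - 8$, and $3p_g(X) - \rounddown{(p_g(X) - 1)/2} - 3$ is at least $2p_g(X)$, yielding $K_X^3 \ge 2p_g(X)$; for $p_g(X) = 3$ the same minimum equals $\min\{6, 4, 5\} = 4$.

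Next, suppose $g(H') \ge 1$. Then Lemma~\ref{lem: albdim 2 canonical dim 2 case 2} gives $K_X^3 \ge 2d$ with $d = (\deg \tau)(\deg \Sigma) = H'^2$. Using the short exact sequence
$$
0 \to \CO_{\Sigma'} \to \CO_{\Sigma'}(H') \to \CO_{H'}(H'|_{H'}) \to 0
$$
together with the injection $\tau^* \colon H^0(\Sigma, \CO(1)) \hookrightarrow H^0(\Sigma', H')$ arising from non-degeneracy of $\Sigma$ in $\PP^{p_g(X) - 1}$ (so that $h^0(\Sigma', H') \ge p_g(X)$), one gets $h^0(H', H'|_{H'}) \ge p_g(X) - 1$. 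Since $\deg(H'|_{H'}) = d$, Riemann--Roch on $H'$ gives $d \ge p_g(X) + g(H') - 2$ when $H'|_{H'}$ is non-special, while Clifford's theorem gives $d \ge 2p_g(X) - 4$ when $H'|_{H'}$ is special (which forces $g(H') \ge 2$). For $p_g(X) \ge 4$ these combine to $d \ge p_g(X)$ in every configuration except $g(H') = 1$ with $d = p_g(X) - 1$; in that residual case necessarily $\deg \tau = 1$ and $\deg \Sigma = p_g(X) - 1 \ge 3$, and Lemma~\ref{lem: albdim 2 canonical dim 2 case 1}(ii) provides $K_X^3 \ge 2p_g(X)$. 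For $p_g(X) = 3$ we have $\Sigma = \PP^2$, so $g(H') \ge 1$ forces $\deg \tau \ge 2$, whence $d \ge 2$ and $K_X^3 \ge 4$.

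The principal obstacle is the borderline configuration $\deg \tau = 1$, $g(H') = 1$, $\deg \Sigma = p_g(X) - 1$ (geometrically, a del Pezzo-like canonical image with elliptic hyperplane sections), where the bound $K_X^3 \ge 2d = 2p_g(X) - 2$ from Lemma~\ref{lem: albdim 2 canonical dim 2 case 2} falls one short; it is precisely to handle this case that the alternative estimate in Lemma~\ref{lem: albdim 2 canonical dim 2 case 1}(ii) is formulated.
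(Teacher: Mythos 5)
Your proof is correct, and its skeleton coincides with the paper's: handle $g(H') = 0$ by Lemma \ref{lem: albdim 2 canonical dim 2 case 1}(i), use $K_X^3 \ge 2d$ from Lemma \ref{lem: albdim 2 canonical dim 2 case 2} when $d$ is large, and isolate the borderline configuration $\deg\tau = 1$, $g(H') = 1$, $\deg\Sigma = p_g(X)-1$, which is exactly what Lemma \ref{lem: albdim 2 canonical dim 2 case 1}(ii) was designed for. Where you genuinely diverge is in how the intermediate range $d \le p_g(X) - 1$ is eliminated. The paper argues through the degree of $\Sigma$: since $\deg\Sigma \ge p_g(X) - 2$, either $\deg\Sigma = p_g(X)-1$, where $\tau$ is birational and Nagata's Theorem 8 (surfaces of degree $N$ in $\PP^N$ have sectional genus at most one) pins down $g(H') = 1$, or $\deg\Sigma = p_g(X)-2$, where Nagata's Theorem 7 (surfaces of minimal degree have rational sections) forces $\deg\tau \ge 2$ and hence $p_g(X) = 3$, $d = 2$. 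You instead bound $d = H'^2$ directly via the restriction sequence $0 \to \CO_{\Sigma'} \to \CO_{\Sigma'}(H') \to \CO_{H'}(H'|_{H'}) \to 0$, which gives $h^0(H', H'|_{H'}) \ge p_g(X) - 1$, and then Riemann--Roch in the nonspecial case ($d \ge p_g(X) + g(H') - 2$, with nonspecialty automatic when $g(H') = 1$) and Clifford in the special case ($d \ge 2p_g(X) - 4$, possible only for $g(H') \ge 2$). This is self-contained and replaces the two Nagata citations with the classical Castelnuovo-type computation underlying them; it also yields $g(H') = 1$ in the residual case for free, rather than deducing it from Nagata's Theorem 8. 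One small remark: for $p_g(X) = 3$ your appeal to $\Sigma = \PP^2$ and $\deg\tau \ge 2$ implicitly uses that $\tau$, being finite (Stein factorization) and birational onto the normal surface $\PP^2$, would be an isomorphism, forcing $g(H') = 0$; alternatively, your own Riemann--Roch bound already gives $d \ge p_g(X) - 1 = 2$ in every $g(H') \ge 1$ case, so this step could be absorbed into the same argument.
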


\begin{proof}
	If $g(H') = 0$, then the result follows from Lemma \ref{lem: albdim 2 canonical dim 2 case 1}. If $g(H') \ge 1$ and $d \ge p_g(X)$, then the result follows from Lemma \ref{lem: albdim 2 canonical dim 2 case 2}. Thus we only need to treat the case when $g(H') \ge 1$ and $d \le p_g(X) - 1$. In this case, we have $p_g(X) - 2 \le \deg \Sigma \le p_g(X) - 1$.
	
	If $\deg \Sigma = p_g(X) - 1$, then $\tau$ is birational and $p_g(X) \ge 4$. By \cite[Theorem 8]{Nagata}, we know that $g(H') = 1$. Thus by Lemma \ref{lem: albdim 2 canonical dim 2 case 1}, we have
	$$
	K_X^3 \ge 2p_g(X).
	$$ 
	If $\deg \Sigma = p_g(X) - 2$, since $g(H') \ge 1$, by \cite[Theorem 7]{Nagata}, we know that $\deg \tau \ge 2$. Now $p_g(X) - 1 \ge d = (\deg \tau) \cdot (\deg \Sigma) \ge 2p_g(X) - 4$. We deduce that $p_g(X) = 3$ and $d = 2$. By Lemma \ref{lem: albdim 2 canonical dim 2 case 2}, we have
	$$
	K_X^3 \ge 2d = 4.
	$$
	The proof is completed.
\end{proof}

\subsubsection{$\dim \Sigma = 1$} Finally, we treat the case when $\dim \Sigma = 1$. 

\begin{prop} \label{prop: albdim 2 canonical dim 1 case 1}
	Suppose that $\alb \dim X \ge 2$ and $\dim \Sigma = 1$.
	\begin{itemize}
		\item [(1)] If $p_g(X) \ge 4$, then $K_X^3 \ge 2p_g(X)$;
		\item [(2)] If $p_g(X) = 3$, then $K_X^3 \ge \frac{16}{3}$.
	\end{itemize}
\end{prop}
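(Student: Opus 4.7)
I will extend the framework of Subsection 3.2 to the case $\dim \Sigma = 1$. Let $\psi: X' \to \Sigma'$ be the fibration induced by $|M|$ onto the smooth curve $\Sigma'$, where $\tau: \Sigma' \to \Sigma \subset \PP^{p_g(X)-1}$ is the finite part of the Stein factorization. Then $M = \psi^*D$ for a divisor $D$ on $\Sigma'$ of degree $d = (\deg \tau)\cdot(\deg \Sigma)$, and non-degeneracy of $\Sigma$ gives $d \ge p_g(X) - 1$. Moreover, since $\psi_*\CO_{X'}=\CO_{\Sigma'}$, we have $p_g(X) = h^0(\Sigma', D)$. Let $F$ be a general fibre of $\psi$, a smooth surface of general type with minimal model $\sigma: F \to F_0$.

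First, I would establish that the general fibre $F$ is irregular. Since $\alb\dim X \ge 2$ while $\psi$ has one-dimensional image, the Albanese map of $X'$ cannot factor through $\psi$, so $a|_F$ is non-constant and $q(F) \ge 1$. Debarre's inequality for irregular surfaces then yields $K_{F_0}^2 \ge 2p_g(F)$. Next, I would combine the fibration structure with an extension-type comparison to prove
\begin{equation*}
((\pi^*K_X)|_F)^2 \;\ge\; K_{F_0}^2 \;\ge\; 2p_g(F).
\end{equation*}
The semi-ampleness of $M$ and the $\QQ$-effectivity of $\pi^*K_X - F$ (as $F$ is a component of $M \le \pi^*K_X$) should permit a Kawamata-style extension showing that $(\pi^*K_X)|_F - \sigma^*K_{F_0}$ is pseudo-effective on $F$. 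With $M \equiv dF$ numerically and $\pi^*K_X \ge M$, we then obtain
\begin{equation*}
K_X^3 \;\ge\; (\pi^*K_X)^2 \cdot M \;=\; d\cdot((\pi^*K_X)|_F)^2 \;\ge\; 2d\, p_g(F).
\end{equation*}

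Finally, I would split cases by $g(\Sigma')$ to convert this into bounds in terms of $p_g(X)$. If $g(\Sigma') \ge 1$, Riemann-Roch on $\Sigma'$ gives $d \ge p_g(X) - 1 + g(\Sigma') \ge p_g(X)$, whence $K_X^3 \ge 2p_g(X)\,p_g(F) \ge 2p_g(X)$. If $g(\Sigma') = 0$, then $\Sigma' \cong \PP^1$ and $d = p_g(X) - 1$; the simple connectivity of $\PP^1$ together with $\alb\dim X \ge 2$ forces $q(F) = q(X) \ge 2$, and refined surface-theoretic inequalities on the irregular $F$ (yielding $p_g(F) \ge 2$ in the range $p_g(X) \ge 4$) push the bound to $K_X^3 \ge 4(p_g(X)-1) \ge 2p_g(X)$. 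The case $p_g(X) = 3$ is the most delicate, where $d = 2$ and the sub-case $p_g(F) = 1$ must be treated by invoking the slope inequalities from \cite{Hu_Zhang1,Hu_Zhang2} for the fibration $\psi$ together with an explicit analysis of the Albanese image, yielding $K_X^3 \ge \tfrac{16}{3}$.

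The main obstacle I foresee is the sharp lower bound $((\pi^*K_X)|_F)^2 \ge K_{F_0}^2$, since a direct application of Lemma \ref{lem: extension} with $S = F$, $\lambda = 1$ is blocked by the failure of $F$ to be semi-ample, and working with $M = \psi^*D$ in place of $F$ loses a factor of $d$. Overcoming this requires carefully exploiting the fact that $M$ decomposes as a sum of $d$ smooth fibres numerically equivalent to $F$, and transferring the Chen-Chen-Jiang extension across the fibration rather than through a single divisor. The remaining subtlety is the borderline case $g(\Sigma') = 0$ with $p_g(F) = 1$, where one must rule out or handle the small Albanese possibilities separately.
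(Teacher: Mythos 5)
Your plan founders at its central inequality $\left((\pi^*K_X)|_F\right)^2 \ge K_{F_0}^2$. What Kawamata's extension theorem gives (via \cite[Corollary 2.3]{Chen_Chen_Jiang}, i.e.\ Lemma \ref{lem: extension}) is the pseudo-effectivity of $(1+\lambda)\pi^*K_X|_F - \sigma^*K_{F_0}$ whenever $\lambda\pi^*K_X - F$ is $\QQ$-effective; the factor $1+\lambda$ is intrinsic to the adjoint-type statement and cannot be removed by ``decomposing $M$ into $d$ fibres,'' because the input to the theorem is precisely the divisor inequality $\lambda\pi^*K_X \ge F$ for the single fibre $F$, and the optimal choice when $g(\Sigma')=0$ (so $M \sim (p_g(X)-1)F$) is $\lambda = \frac{1}{p_g(X)-1}$. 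This yields only $K_X^3 \ge \frac{(p_g(X)-1)^3}{p_g(X)^2}K_{F_0}^2$, which is exactly how the paper argues, and which suffices only when $K_{F_0}^2 \ge 6$; the boundary case $K_{F_0}^2 = 6$, $p_g(X)=3$ is precisely the source of the constant $\frac{16}{3}$, a number your sketch produces by no concrete mechanism (the slope inequalities of \cite{Hu_Zhang1,Hu_Zhang2} you invoke concern Albanese-type fibrations, not the fibration $\psi$ over $\Sigma' \cong \PP^1$). Consequently your chain $K_X^3 \ge d\left((\pi^*K_X)|_F\right)^2 \ge 2d\,p_g(F)$, and everything downstream of it --- the bound $2p_g(X)p_g(F)$ for $g(\Sigma')\ge 1$ and the bound $4(p_g(X)-1)$ for $g(\Sigma')=0$ --- is unproved. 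You flagged the obstacle yourself, but the proposed remedy does not address it.

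The paper closes the two regimes your inequality was meant to cover by different means, both absent from your sketch. When $g(\Sigma')>0$, the fibres of $\pi$ are rationally chain connected \cite[Corollary 1.5]{Hacon_Mckernan}, so $\psi$ descends to a fibration $h\colon X \to \Sigma'$ whose general fibre is already minimal; then $K_X^3 \ge b\,(K_X^2\cdot S_0) = b\,K_{S_0}^2 \ge 2p_g(X)$ using $b \ge p_g(X)$ and Bombieri's $K_{S_0}^2 \ge 2\chi(\omega_{S_0}) \ge 2$ --- no extension theorem and no lower bound on $p_g(F)$ is needed, whereas your route through the unjustified sharp inequality fails even here. When $g(\Sigma')=0$ and $K_{S_0}^2 \le 5$, Debarre's inequality forces $p_g(S) \le 2$, while Ohno's splitting of $R^1\psi_*\omega_{X'/\Sigma'}$ into nef line bundles gives $q(S) = \rank R^1\psi_*\omega_{X'} \ge h^1(\Sigma', R^1\psi_*\omega_{X'}) = q(X) \ge 2$, pinning down $p_g(S)=q(S)=q(X)=2$; the paper then computes $h^2(X,\CO_X) \le 1$ and concludes by the Severi inequality $K_X^3 \ge 4\chi(\omega_X) \ge 4p_g(X)$ --- a cohomological argument of a completely different nature from the surface-theoretic refinement you propose. (Incidentally, the subcase $p_g(F)=1$ you planned to treat separately cannot occur in this regime: $q(S)\ge q(X)\ge 2$ together with $\chi(\omega_S)\ge 1$ already forces $p_g(S) \ge 2$.)
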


\begin{proof}
	Since $\dim \Sigma = 1$, we have
	$
	M \equiv b S,
	$
	where $S$ is a general fibre of $\psi$. 
	
	We first consider the case when $g(\Sigma') > 0$. In this case, every fibre of $\pi$ is rationally chain connected by \cite[Corollary 1.5]{Hacon_Mckernan}. Thus $\psi$ naturally descends to a fibration $h: X \to \Sigma'$. By the adjunction, the general fibre of $h$ is smooth and minimal. Thus we may assume that $S_0$ is a general fibre of $h$. Since $\alb \dim X \ge 2$, we know that $q(S_0) > 0$. By \cite[Lemma 14]{Bombieri}, $K_{S_0}^2 \ge 2$. On the other hand, $g(\Sigma') > 0$ implies that $b \ge p_g(X)$. Thus it follows that
	$$
	K_X^3 \ge b(K_X^2 \cdot S_0) = bK_{S_0}^2 \ge 2p_g(X).
	$$
	
	From now on, we assume that $g(\Sigma') = 0$. Then every vector bundle over $\Sigma'$ splits. In this case, $M \sim (p_g(X) - 1) S$. By Lemma \ref{lem: extension} for $\lambda = \frac{1}{p_g(X) - 1}$, when $K_{S_0}^2 \ge 6$ and $p_g(X) \ge 3$, we always have 
	$$
	K_X^3 \ge \frac{\left(p_g(X) - 1\right)^3}{\left(p_g(X)\right)^2} K_{S_0}^2 \ge \frac{6\left(p_g(X) - 1\right)^3}{\left(p_g(X)\right)^2} > 2 p_g(X),
	$$
	except the case when $K_{S_0}^2 = 6$ and $p_g(X) = 3$ in which we have $K_X^3 \ge \frac{16}{3}$.
	
	It remains to treat the case when $K_{S_0}^2 \le 5$. In this case, by \cite[Th\'eor\`eme 6.1]{Debarre}, $q(S) \le p_g(S) \le 2$. On the other hand,  by the Leray spectral sequence, we have
	$$
	q(X) = h^2(X', \omega_{X'}) = h^1(\Sigma', R^1\psi_*\omega_{X'}) + h^0(\Sigma',\omega_{\Sigma'}) = h^1(\Sigma', R^1\psi_*\omega_{X'}).
	$$
	Since $R^1\psi_* \omega_{X'/\Sigma'}$ splits into nef line bundles \cite[Lemma 2.5]{Ohno}, we deduce that 
	$$
	q(S) = \rank R^1\psi_*\omega_{X'} \ge h^1(\Sigma', R^1\psi_*\omega_{X'}) = q(X) \ge 2.
	$$
	It follows that $p_g(S) = q(S) = q(X) = 2$. By \cite[Theorem 2.7]{Fujita} $p_g(S) = 2$ implies that $\psi_*\omega_{X'/\Sigma'}$ splits into two nef line bundles. Since $h^0(\Sigma', \psi_* \omega_{X'}) = p_g(X) \ge 3$, we deduce that $h^1(\Sigma', \psi_*\omega_{X'}) \le 1$. Furthermore, $q(S) = q(X) = 2$ implies that $R^1\psi_* \omega_{X'} = \omega_{\Sigma'}^{\oplus 2}$.
	By the Leray spectral sequence, we have
	$$
	h^2(X, \CO_X) = h^1(X', \omega_{X'}) = h^1(\Sigma', \psi_* \omega_{X'}) + h^0(\Sigma', R^1\psi_*\omega_{X'}) \le 1.
	$$
	Thus by \eqref{eq: Severi q>1}, it follows that
	$$
	K_X^3 \ge 4 \left(p_g(X) - h^2(X, \CO_X) + q(X) - 1\right) \ge 4p_g(X).
	$$
	The proof is completed.
\end{proof}

\begin{prop} \label{prop: albdim 2 canonical dim 1 case 2}
	Suppose that $\alb \dim X \ge 2$ and $p_g(X) = 2$. Then $K_X^3 \ge \frac{8}{3}$.
\end{prop}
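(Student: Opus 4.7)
The plan is to mimic the proof of Proposition~\ref{prop: albdim 2 canonical dim 1 case 1}, dividing into cases by the genus of the Stein-factorisation curve $\Sigma'$. If $g(\Sigma') \ge 1$, the pencil $|M|$ descends to a fibration $h : X \to \Sigma'$, and the hypothesis $\alb\dim X \ge 2$ forces the minimal model $S_0$ of a general fibre to satisfy $q(S_0) > 0$. By Bombieri's Lemma~14, $K_{S_0}^2 \ge 2$, while Clifford on a curve of positive genus with $h^0(D) = p_g(X) = 2$ forces $b := \deg D \ge 2$. Hence $K_X^3 \ge b K_{S_0}^2 \ge 4 > \frac{8}{3}$.

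If $g(\Sigma') = 0$, i.e., $\Sigma' = \PP^1$, then $M \sim S$ and Lemma~\ref{lem: extension} with $\lambda = 1$ gives $K_X^3 \ge K_{S_0}^2/4$. This immediately handles $K_{S_0}^2 \ge 11$. For the remaining range $K_{S_0}^2 \le 10$, I would run the Leray spectral-sequence analysis exactly as in Proposition~\ref{prop: albdim 2 canonical dim 1 case 1}. Writing $\psi_*\omega_{X'/\PP^1} = \bigoplus \CO(b_i)$ and $R^1\psi_*\omega_{X'/\PP^1} = \bigoplus \CO(a_j)$ with $a_j, b_i \ge 0$, one obtains $\sum \max(0, b_i - 1) = p_g(X) = 2$, $q(X) = \#\{a_j = 0\}$, and $h^2(\CO_X) = \#\{b_i = 0\} + \sum_{a_j \ge 2}(a_j - 1)$. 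Debarre's inequality forces $p_g(S) \le K_{S_0}^2/2 \le 5$; moreover, since the Albanese map of $X$ cannot factor through $\PP^1$, the hypothesis $\alb\dim X \ge 2$ yields $\alb\dim S = 2$ for a general fibre. Pardini's Severi inequality together with generic vanishing then give $0 \le \chi(\omega_S) \le 2$, i.e., $q(S) - 1 \le p_g(S) \le q(S) + 1$. The plan is to verify on this short list of configurations that $\chi(\omega_X) \ge 1$, whence the Zhang-Clifford Severi inequality $K_X^3 \ge 4\chi(\omega_X) \ge 4 > \frac{8}{3}$ finishes the argument.

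The main obstacle is the case analysis in the range $6 \le K_{S_0}^2 \le 10$. In Proposition~\ref{prop: albdim 2 canonical dim 1 case 1}, the tighter bound $K_{S_0}^2 \le 5$ forced $p_g(S) \le 2$ and hence $p_g(S) = q(S) = 2$ immediately; here, many more $(p_g(S), q(S))$-pairs are a priori possible, and one must simultaneously control both $\#\{b_i = 0\}$ and the contribution $\sum_{a_j \ge 2}(a_j - 1)$ from high-degree summands. The constraint $\chi(\omega_S) \le 2$ combined with $q(S) \ge q(X) \ge 2$ bounds these counts, but completing the verification of $\chi(\omega_X) \ge 1$ in every configuration is the key technical step.
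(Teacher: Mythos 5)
Your first case ($g(\Sigma') \ge 1$) is sound and essentially reproduces the paper's treatment of the analogous case in Proposition \ref{prop: albdim 2 canonical dim 1 case 1}, and the reduction via Lemma \ref{lem: extension} to the range $K_{S_0}^2 \le 10$ is correct. But the heart of your argument over $\Sigma' = \PP^1$ has two genuine gaps. First, the claim that $\alb\dim X \ge 2$ forces $\alb\dim S = 2$ for the general member of the canonical pencil is unjustified: the hypothesis only yields $q(S) \ge q(X) \ge 2$ (via $q(X) = h^1(\Sigma', R^1\psi_*\omega_{X'}) \le \rank R^1\psi_*\omega_{X'} = q(S)$), while the Albanese image of each fibre may a priori be a curve moving inside the Albanese surface of $X$. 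The paper never makes such a claim; indeed, in its $\dim \Sigma = 2$ analysis it explicitly treats $\alb\dim S < 2$ as a separate case. Without it you lose Pardini's Severi inequality, hence the bound $\chi(\omega_S) \le 2$, and your configuration list is not even finite: $q(S) \le 5$ bounds the \emph{number} of summands $\CO(a_j)$ of $R^1\psi_*\omega_{X'/\PP^1}$, but nothing you state bounds the degrees $a_j$, so nothing controls the contribution $\sum_{a_j \ge 2}(a_j - 1)$ to $h^2(X, \CO_X)$.

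Second, and more fundamentally, the endgame ``verify $\chi(\omega_X) \ge 1$ in every configuration, then conclude via $K_X^3 \ge 4\chi(\omega_X) \ge 4$'' cannot succeed: if $\chi(\omega_X) \ge 1$ held throughout, the proposition would assert $K_X^3 \ge 4$, whereas the stated bound is $\frac{8}{3}$. The paper's actual proof is organized precisely around the failure of this hope. It abandons the canonical pencil entirely, introduces the continuous rank $h^0_a(X, K_X) = \min\{h^0(X, \omega_X \otimes \alpha) \mid \alpha \in \Pic^0(X)\}$, disposes of $\alb\dim X = 3$ (where $K_X^3 \ge 12$ by Jiang) and of $h^0_a \ge 1$ (where $K_X^3 \ge 4$ by the Clifford--Severi theorem), and in the remaining case $h^0_a = 0$ --- exactly where $\chi(\omega_X)$ may vanish and the Severi inequality gives nothing --- it invokes the decomposition of $a_*\omega_X$ over the abelian surface into summands pulled back from elliptic curves, then runs fibrations $f_i: X \to E_i$, the Hu--Zhang slope inequalities, Miyaoka's nefness criterion, and in the hardest subcase the Catanese--Ciliberto--Mendes Lopes classification forcing the fibre to be a theta divisor with $K_{F_1}^2 = 6$. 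The constant $\frac{8}{3}$ arises there as $4K_F^2/(K_F^2 + 4)$ with $K_F^2 = 8$ and $\deg f_*\omega_X = 1$; such threefolds are invisible to your spectral-sequence bookkeeping, so the plan cannot be completed as stated.
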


\begin{proof}
	By \cite[Theorem 1.4]{Jiang}, if $\alb \dim X = 3$, then $K_X^3 \ge 12$. Thus to prove the proposition, we may assume that $\alb \dim X = 2$. Let $a: X \to A$ be the Albanese map of $X$, and let
	$$
	h^0_a(X, K_X) := \min \{h^0(X, \omega_X \otimes \alpha)| \alpha \in \Pic^0(X)\}.
	$$
	If $h^0_a(X, K_X) \ge 1$, by \cite[Theorem 1.3]{Zhang_Clifford}, we have
	$$
	K_X^3 \ge 4 h^0_a(X, K_X) \ge 4.
	$$
	Thus in the following, we assume that $h^0_a(X, K_X) = 0$. By the proof of \cite[Proposition 4.3]{Jiang}, under this assumption, $a$ is surjective. In particular, $A$ is an abelian surface. By the proof of \cite[Proposition 4.3]{Jiang}, we have
	$$
	a_* \omega_X = \CE_0 \oplus \left( \bigoplus_{i=1}^l \CE_i\right),
	$$
	where $\CE_0$ is either zero or a direct sum of torsion line bundles on $A$, and $\CE_i = \bigoplus_{j=1}^{n_i} (p_i^* \CF_{ij} \otimes \CP_{ij})$ is non-zero for each $i > 0$. Here $p_i: A \to E_i$ is a fibration from $A$ to a genus one curve $E_i$, each $\CF_{ij}$ is an ample vector bundle on $E_i$, each $\CP_{ij} \in \Pic^0(A)$ is a torsion line bundle and we may assume that $\CP_{ij}\notin p_i^*\Pic^0(E_i)$ when it is non-trivial. Moreover, the $p_i$'s are all distinct and $l \ge 2$.
	
	\textbf{Case 1}: $h^0(A, \CE_0) < 2$. We first consider the case when $h^0(A, \CE_0) < 2$. Since $p_g(X) = 2$, we have $h^0(A, \bigoplus_{i=1}^l \CE_i) > 0$. 
	
	Without loss of generality, we assume that $h^0(A, \CE_1) > 0$. Then at least one $\CP_{1j}$ must be trivial. Say $\CP_{11}$ is trivial. Let $f: X \to B$ be the Stein factorization of the composition $p_1 \circ a: X \to E_1$. Denote by $F$ the general fibre of $f$. Then $F$ is an irregular surface of general type. If $g(B) \ge 2$, since $K_{X/B}$ is nef, we have
	$$
	K_X^3 \ge K_{X/B}^3 + \left( 6g(B) - 6 \right)K_F^2 \ge 6K_F^2 \ge 6.
	$$
	Thus we only need to treat the case when $g(B) = 1$. In this case, by the universal property of the Albanese morphism, $p_1$ factors through the morphism $A \to B$. Thus we may further assume that $B = E_1$ and $f = p_1 \circ a$. Now $\CF_{11}$ is a direct summand of $f_* \omega_X$. Since $f_* \omega_X$ is nef \cite[Theorem 2.7]{Fujita}, we know that 
	$$
	\deg f_* \omega_X \ge \deg \CF_{11} \ge 1.
	$$
	
	Suppose that $\deg f_* \omega_X \ge 2$. By \cite[Theorem 1.6]{Hu_Zhang1}, we have
	$$
	K_X^3 \ge 2 \deg f_* \omega_X \ge 4.
	$$
	Thus in the following, we assume that $\deg f_* \omega_X = 1$. Then $\deg \CF_{11} = 1$. If $K_F^2 \ge 8$, by \cite[Theorem 1.7]{Hu_Zhang1}, we have
	$$
	K_X^3 \ge \left(\frac{4K_F^2}{K_F^2 + 4}\right) \deg f_* \omega_X = \frac{4K_F^2}{K_F^2 + 4} \ge \frac{8}{3}.
	$$
	Now assume that $K_F^2 < 8$. By \cite[Th\'eor\`eme 6.1]{Debarre}, $K_F^2 \ge 2p_g(F)$. Since $\CF_{11}$ is ample, we know that $h^1(E_1, \CF_{11}) = 0$. Thus by the Riemann-Roch theorem, $h^0(E_1, \CF_{11}) = 1$. Note that $h^0(E_1, f_* \omega_X) = p_g(X) = 2$. Thus $\rank \CF_{11} < \rank f_* \omega_X = p_g(F)$.
	On the other hand, by \cite[Corollary 3.5]{Miyaoka}, we know that $K_X - \mu(\CF_{11})F$ is pseudo-effective. We deduce that
	$$
	K_X^3 \ge \mu(\CF_{11}) K_F^2 \ge \frac{K_F^2}{p_g(F) - 1} \ge \frac{2K_F^2}{K_F^2 - 2} > \frac{8}{3}
	$$
	
	\textbf{Case 2}: $h^0(A, \CE_0) = 2$. We now treat the case when $h^0(A, \CE_0) = 2$. In this case, $\CE_0$ contains $\CO_A^{\oplus 2}$ as a direct summand. 
	
	Since $p_1$ and $p_2$ are distinct, $p_1^*\Pic^0(E_1)$ and $p_2^*\Pic^0(E_2)$ generate $\Pic^0(A)$. We may assume that each $\CP_{1j} \in p_2^*\Pic^0(E_2)$ and each $\CP_{2j} \in p_1^*\Pic^0(E_1)$. Since $h^0(A, \CE_0) = p_g(X) = 2$, every $\CP_{1j}$ and $\CP_{2j}$ is a non-trivial torsion. Applying the same argument as in Case 1, we see that the proof is reduced again to the case when both $f_i = p_i \circ a: X \to E_i$ have connected fibres for $i = 1, 2$. Now $\CF_{i1}$ is a direct summand of $(f_i)_* (\omega_X \otimes a^*\CP_{i1}^{-1})$. Since $\CP_{i1}$ is a torsion, by \cite[Corollary 3.4]{Viehweg} (take $\mathcal{L}=a^*\CP_{11}^{-1}$ and $D=0$), we know that $(f_i)_* (\omega_X \otimes a^*\CP_{i1}^{-1})$ is also nef. Thus
	$$
	\deg (f_i)_* (\omega_X \otimes a^*\CP_{i1}^{-1}) \ge \deg \CF_{i1} \ge 1.
	$$
	
	Let $F_i$ be a general fibre of $f_i$ for $i=1, 2$. If $K_{F_i}^2 \ge 8$ for any $i$, applying the same proof of \cite[Theorem 1.7]{Hu_Zhang1} for $\omega_X \otimes \CP_{i1}^{-1}$, we deduce that
	$$
    K_X^3 \ge \left(\frac{4K_{F_i}^2}{K_{F_i}^2 + 4}\right) \deg (f_i)_* (\omega_X \otimes a^*\CP_{i1}^{-1}) = \frac{4K_{F_i}^2}{K_{F_i}^2 + 4} \ge \frac{8}{3}.
    $$
    
    From now on, we assume that $K_{F_i}^2 \le 7$ for $i = 1, 2$. By the assumption on $a_* \omega_X$, we have $\CO_{E_1}^{\oplus 2} \oplus (p_1)_*\CE_2 \subset (f_1)_* \omega_X$. Note that $(p_1)_*\CE_2 \ne 0$. Thus $p_g(F_1) = \rank (f_1)_* \omega_X \ge 3$. By the proof of \cite[Proposition 4.3]{Jiang}, an \'etale cover of $F_1$ is of maximal Albanese dimension. Thus the Severi inequality $K_{F_1}^2 \ge 4 \chi(\omega_{F_1})$ \cite[Theorem 2.1]{Pardini} implies that $\chi(\omega_{F_1}) = 1$. Since $K_{F_1}^2 \ge 2p_g(F_1)$ \cite[Th\'eor\`eme 6.1]{Debarre}, we deduce that $p_g(F_1) = q(F_1) = 3$. By \cite{Catanese_Ciliberto_Lopes}, it follows that $K_{F_1}^2 = 6$ and $F_1$ is isomorphic to a smooth theta divisor in a principally polarized abelian $3$-fold $V$. The same holds also for $F_2$. 
    
    We claim that $a$ has connected fibres. Otherwise, let $h: X \to W$ be the Stein factorization of $a$. Then $W$ is a normal surface whose smooth model has Kodaira dimension at least one. Note that both $h_i: W \to E_i$ have connected fibres for $i = 1, 2$. Thus there exists an $i$ such that the general fibre of $h_i$ has genus at least two. Say $i = 1$, and let $B$ be a general fibre of $h_1$. Then $g(B) \ge 2$. Moreover, the morphism $h|_{F_1}: F_1 \to B$ is just the Stein factorization of $a|_{F_1}$. Let $C$ be a general fibre of $h|_{F_1}$. Since $K_{F_1/B}$ is nef, we have
    $$
    K_{F_1}^2 = K_{F_1/B}^2 + 8(g(B) - 1) (g(C) - 1) \ge 8(g(B) - 1) (g(C) - 1) \ge 8.
    $$
    This is absurd. Thus the claim is verified.
    
    By the claim, $a|_{F_1}: F_1 \to E'_2$ is a fibration, where $E'_2$ is a general fiber of $p_1$. It is clear that $p_2|_{E'_2}: E'_2\to E_2$ is an isogeny. Since $\CP_{11} \in p_2^*\Pic^0(E_2)$ is non-trivial, $\CP := (a^*\CP_{11}^{-1})|_{F_1}$ is a non-trivial torsion line bundle on $F$. Since $F_1$ is a smooth theta divisor in $V$ and $\Pic^0(F_1) \simeq \Pic^0(V)$, we may assume that $\CP \in \Pic^0(V)$ is non-trivial. Consider the long exact sequence
    $$
    H^0(V, \CP) \to H^0(V, \CO_V(F_1) \otimes \CP) \to H^0(F_1, \omega_{F_1} \otimes \CP) \to H^1(V, \CP).
    $$
    The non-triviality of $\CP$ implies that $h^0(V, \CP) = h^1(V, \CP) = 0$. It follows that $h^0(F_1, \omega_{F_1} \otimes \CP) = h^0(V, \CO_V(F_1) \otimes \CP) = 1$. As a result, $\rank (f_1)_* (\omega_X \otimes a^*\CP_{11}^{-1}) = 1$. In particular, $\rank \CF_{11} = 1$. Thus $\mu(\CF_{11}) \ge 1$. By \cite[Corollary 3.5]{Miyaoka} again, $K_X - F_1$ is pseudo-effective. It follows that 
    $$
    K_X^3 \ge \mu(\CF_{11}) K_{F_1}^2 \ge K_{F_1}^2 = 6.
    $$
    The whole proof is completed.
\end{proof}

\subsection{The case when $X$ is Gorenstein} We now treat the case when $X$ is Gorenstein.

\begin{prop} \label{prop: albdim 2 Gorenstein}
	Suppose that $X$ is Gorenstein. Then Theorem \ref{thm: q>1} holds.
\end{prop}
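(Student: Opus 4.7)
The plan is to reduce to the cases not yet settled in this section and to invoke the extra integrality afforded by Reid's plurigenus formula for Gorenstein threefolds. Since the case $\alb\dim X=1$ is already covered by Proposition \ref{prop: albdim 1 q>1}, I assume $\alb\dim X\ge 2$. Under this assumption the general Albanese fibre is a curve, so the condition ``not a $(1,2)$-surface'' is vacuous, and what must be proved is $K_X^3\ge 2p_g(X)$ together with the equality characterization for $K_X^3\ge \tfrac{4}{3}p_g(X)$. For $p_g(X)\ge 4$ the bound $K_X^3\ge 2p_g(X)$ is already established by Propositions \ref{prop: albdim 2 canonical dim 3}, \ref{prop: albdim 2 canonical dim 2}(1) and \ref{prop: albdim 2 canonical dim 1 case 1}(1), and the case $p_g(X)=1$ is Proposition \ref{prop: albdim 2 pg=1}. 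Thus the Gorenstein hypothesis has to do genuine work only when $p_g(X)\in\{2,3\}$.

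The decisive new ingredient is Reid's plurigenus formula: for a minimal threefold with at worst terminal Gorenstein singularities it specializes to
$$
P_2(X)=\tfrac{1}{2}K_X^3+3\chi(\omega_X),
$$
and since $P_2(X)\in\ZZ$ this forces $K_X^3$ to be an even integer. For $p_g(X)=2$ the only admissible canonical-image dimension is $\dim\Sigma=1$, and combining this parity with the bound $K_X^3\ge \tfrac{8}{3}$ from Proposition \ref{prop: albdim 2 canonical dim 1 case 2} immediately yields $K_X^3\ge 4=2p_g(X)$. For $p_g(X)=3$ the subcase $\dim\Sigma=3$ is handled directly by Proposition \ref{prop: albdim 2 canonical dim 3}, while the subcase $\dim\Sigma=1$ follows from the bound $K_X^3\ge \tfrac{16}{3}$ in Proposition \ref{prop: albdim 2 canonical dim 1 case 1}(2) together with the even-integer constraint.

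The main obstacle is the single remaining subcase $p_g(X)=3$ with $\dim\Sigma=2$, in which Proposition \ref{prop: albdim 2 canonical dim 2}(2) yields only $K_X^3\ge 4$; since $4$ is already even, parity alone does not suffice. Here the geometry is very restrictive: $\Sigma\subseteq\PP^2$ has degree one, so $\Sigma=\PP^2$ and $\tau:\Sigma'\to\PP^2$ is a double cover with $d=2$. I would revisit the proof of Lemma \ref{lem: albdim 2 canonical dim 2 case 2} in this explicit geometry to obtain a strict improvement $K_X^3>4$, either by extracting the positive contribution from the term $(2g(C)-4)(g(H')-1)$ in the $\alb\dim S=2$ subcase (using the irregularity $q(X)\ge 2$ to pin down $g(H')$ and $g(C)$ via Hurwitz applied to the double cover $\tau$) or by sharpening the estimate $(K_X\cdot C)\ge 2g(C)-2$ in the $\alb\dim S<2$ subcase by means of the same irregularity hypothesis. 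The Gorenstein parity constraint then promotes any strict improvement $K_X^3>4$ to $K_X^3\ge 6=2p_g(X)$. Once $K_X^3\ge 2p_g(X)>\tfrac{4}{3}p_g(X)$ holds in every subcase with $p_g(X)\ge 1$, the equality in $K_X^3\ge \tfrac{4}{3}p_g(X)$ cannot occur for Gorenstein $X$, and the equality characterization of Theorem \ref{thm: q>1} follows automatically.
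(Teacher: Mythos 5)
Your reductions are sound for $\alb\dim X \ge 2$ with $p_g(X)\ge 4$, for $p_g(X)\le 2$ (the parity route via $P_2(X)=\tfrac12 K_X^3+3\chi(\omega_X)$ applied to the bound $K_X^3\ge\tfrac83$ of Proposition \ref{prop: albdim 2 canonical dim 1 case 2} is a valid alternative to the paper's use of $\chi(\omega_X)\ge 1$ plus the Severi inequality \eqref{eq: Severi q>1}), and for $p_g(X)=3$ with $\dim\Sigma=1$, which matches the paper. (Your appeal to Proposition \ref{prop: albdim 2 canonical dim 3} when $p_g(X)=3$ is vacuous, since $\Sigma\subseteq\PP^2$ forces $\dim\Sigma\le 2$; harmless.) But the decisive subcase $p_g(X)=3$, $\dim\Sigma=2$ is not proved: you offer only a plan (``I would revisit the proof of Lemma \ref{lem: albdim 2 canonical dim 2 case 2}\ldots''), and the plan has two concrete defects. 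First, your description of the residual geometry is wrong: Proposition \ref{prop: albdim 2 canonical dim 2}(2) returns the bound $K_X^3\ge 4$ not only through the branch $g(H')\ge 1$, $d=2$ (where indeed $\Sigma=\PP^2$ and $\deg\tau=2$), but also through the branch $g(H')=0$ via Lemma \ref{lem: albdim 2 canonical dim 2 case 1}, where the minimum is attained by $4p_g(X)-8=4$ and $\tau$ need not be a double cover, so your Hurwitz analysis does not apply there. Second, even in the $d=2$ branch the correction terms you hope to exploit can vanish: $(2g(C)-4)(g(H')-1)=0$ whenever $g(C)=2$ or $g(H')=1$, and $d(2g(C)-2)=4$ exactly when $g(C)=2$; nothing in your sketch excludes these configurations, and ``using $q(X)\ge 2$ to pin down $g(H')$ and $g(C)$'' is asserted, not argued. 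So the case on which the Gorenstein hypothesis must do its work is left open.

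The irony is that you already hold the key tool and use it only for parity. The paper's actual argument for $p_g(X)=3$, $\dim\Sigma=2$ combines the same Riemann--Roch identity $h^0(X,2K_X)=\tfrac12 K_X^3+3\chi(\omega_X)$ with Lemma \ref{lem: h^0(2L)}, which for a two-dimensional canonical image gives $h^0(X,2K_X)\ge 3p_g(X)-3=6$, hence $K_X^3\ge 12-6\chi(\omega_X)$; the Severi inequality \eqref{eq: Severi q>1} disposes of $\chi(\omega_X)\ge 2$ (then $K_X^3\ge 8$), so one may assume $\chi(\omega_X)=1$ and conclude $K_X^3\ge 6=2p_g(X)$ with no case analysis on $g(H')$, $d$ or $\tau$ whatsoever. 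To repair your proof, replace the speculative third paragraph by this quadric-counting argument; as written, the proposal is incomplete at its central point.
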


\begin{proof}
	By Proposition \ref{prop: albdim 1 q>1}, we may assume that $\alb \dim X \ge 2$. Since $X$ is Gorenstein, we know that $\chi(\omega_X) \ge 1$ \cite[\S 2.1]{Chen_Chen_Zhang}. By \eqref{eq: Severi q>1}, we have $K_X^3 \ge 4$. Thus we may further assume that $p_g(X) \ge 3$. Moreover, by Proposition \ref{prop: albdim 2 canonical dim 3}, \ref{prop: albdim 2 canonical dim 2} and \ref{prop: albdim 2 canonical dim 1 case 1}, the proof reduces to the case when $p_g(X) = 3$ and $\chi(\omega_X)=1$, and we only need to show that $K_X^3 \ge 6$.
	
	Let $\Sigma$ be the canonical image of $X$. Since $p_g(X) = 3$, we have $\dim \Sigma \le 2$. If $\dim \Sigma = 2$, by Lemma \ref{lem: h^0(2L)}, 
	$h^0(X, 2K_X) \ge 3p_g(X) - 3 = 6$. On the other hand, by the Riemann-Roch formula, we have
	$$
	h^0(X, 2K_X) = \frac{1}{2}K_X^3 + 3 \chi(\omega_X) = \frac{1}{2}K_X^3 + 3.
	$$
	Thus we deduce that $K_X^3 \ge 2h^0(X, 2K_X) - 6 \ge 6$. If $\dim \Sigma = 1$, by Proposition \ref{prop: albdim 2 canonical dim 1 case 1} and the fact that $K_X^3$ is even \cite[\S 2.2]{Chen_Chen_Zhang}, we see that $K_X^3 \ge 6$. Thus the proof is completed.
\end{proof}

\subsection{Proof of Theorem \ref{thm: q>1}} It is clear that the two inequalities in Theorem \ref{thm: q>1} follow from Proposition \ref{prop: albdim 1 q>1}, \ref{prop: albdim 2 canonical dim 3}, \ref{prop: albdim 2 canonical dim 2}, \ref{prop: albdim 2 canonical dim 1 case 1}, \ref{prop: albdim 2 canonical dim 1 case 2} and \ref{prop: albdim 2 Gorenstein}.

Suppose that $K_X^3 = \frac{4}{3}p_g(X)$. Then the above propositions imply that $\alb \dim X \ge 2$ and $p_g(X) = 2, 3$. Thus $K_X^3 \le 4$. Let $a: X \to A$ be the Albanese map of $X$. As in the proof of Proposition \ref{prop: albdim 2 canonical dim 1 case 2}, we have $\dim a(X) = 2$. Consider
$$
h^0_a(X, K_X) := \min \{h^0(X, \omega_X \otimes \alpha)| \alpha \in \Pic^0(X)\}.
$$
If $h^0_a(X, K_X) \ge 1$, since $K_X^3 \le 4$, we know that $K_X^3 = 4 h^0_a(X, K_X)$ and $A$ is an abelian surface by \cite[Theorem 1.3]{Zhang_Clifford}. If $h^0_a(X, K_X) = 0$, by the proof of Proposition \ref{prop: albdim 2 canonical dim 1 case 2}, we know that $A$ is also an abelian surface. In both cases, $q(X) = 2$. Thus the proof is completed.




\section{Irregular $3$-folds of general type with $q = 1$: Part I}

Let $X$ be a minimal $3$-fold of general type with $q(X) = 1$. Let 
$$
f: X \to B
$$
be the Albanese fibration of $X$, where $B$ is a smooth curve of genus $g(B) = 1$. Throughout this section, we always assume that the general fibre $F$ of $f$ is a $(1, 2)$-surface. The main result in this section is the following theorem.

\begin{theorem} \label{thm: q=1 1}
	Let $X$ be as above. Suppose that either $X$ is Gorenstein or $p_g(X) \ge 16$. Then
	$$
	K_X^3 \ge \frac{4}{3} p_g(X).
	$$
	Moreover, if $p_g(X) \ge 16$ and $h^2(X, \CO_X) > 0$, then 
	$$
	K_X^3 \ge \frac{4}{3} p_g(X) + \frac{1}{33}.
	$$
\end{theorem}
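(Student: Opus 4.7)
The plan is to build on the Noether-Severi inequality $K_X^3 \ge \frac{4}{3}\chi(\omega_X)$ from \cite{Hu_Zhang2}. Since $q(X) = 1$ forces $\chi(\omega_X) = p_g(X) - h^2(X, \CO_X)$, this already yields the desired bound whenever $h^2(X, \CO_X) = 0$. The essential new content is therefore the case $h^2(X, \CO_X) > 0$, in which the Noether-Severi bound falls short by $\tfrac{4}{3}h^2(X, \CO_X)$; note that $h^2(X, \CO_X) \le 2$ because $\rank f_*\omega_X = p_g(F) = 2$. The main new input, as advertised in the introduction, is a positivity statement: $K_X - aF$ is nef for some $a$, with $a = 3/4$ in the Gorenstein case and with $a > 4/3$ when $p_g(X) \ge 16$.

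To produce such an $a$, I would study the direct image sheaves $f_*\CO_X(mK_X)$ on the elliptic curve $B$. Since $q(F) = 0$ for a $(1, 2)$-surface, $R^i f_*\omega_X = 0$ for $i \ge 1$, so by Leray $h^2(X, \CO_X) = h^1(B, f_*\omega_X)$; this is positive exactly when the Fujita decomposition of the rank-two sheaf $f_*\omega_X$ has a degree-zero direct summand, which is precisely what obstructs a direct application of Lemma \ref{lem: nefness} to $L = K_X$. The remedy is to pass to $m \ge 2$: the sheaf $f_*\CO_X(mK_X)$ has rank $h^0(F, mK_F) \in \{4, 6, 9, \ldots\}$ for $m = 2, 3, 4, \ldots$ on a $(1, 2)$-surface, and its ampleness is supplied by Corollary \ref{coro: vector bundle ample} after a standard nef-twist. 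A Riemann-Roch computation on $B$ together with the structure of the Harder-Narasimhan filtration lower-bounds its minimum slope $\mu_l$; applying Lemma \ref{lem: nefness} to $mK_X$ and absorbing the horizontal part of the indeterminacy $\Delta$ by a birational resolution then delivers $K_X - aF$ nef with $a = \mu_l/m$. The threshold $p_g(X) \ge 16$ is precisely what is needed for $\deg f_*\CO_X(mK_X)$ to grow enough to make $a$ exceed $4/3$, whereas the Gorenstein hypothesis secures $a = 3/4$ already for $p_g(X) \ge 4$ via a direct (and coarser) estimate.

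With the nefness in hand, I would finish by combining it with the Noether-Severi bound. Since $F^2 = 0$ on $X$, inequalities such as $(K_X - aF)^2 \cdot K_X \ge 0$ and $(K_X - aF) \cdot K_X^2 \ge 0$ give $p_g$-independent estimates for $K_X^3$; paired with the $p_g$-linear Noether-Severi inequality $K_X^3 \ge \tfrac{4}{3}(p_g(X) - h^2(X, \CO_X))$ and the structural bound $h^2(X, \CO_X) \le 2$, a case-by-case (or convex-combination) argument absorbs the $\tfrac{4}{3}h^2$ deficit and yields $K_X^3 \ge \tfrac{4}{3}p_g(X)$ in both the Gorenstein and the $p_g(X) \ge 16$ settings. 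In the latter case, the strict inequality $a > 4/3$ contributes a genuine additional gain over the Noether-Severi estimate, and the explicit constant $\tfrac{1}{33}$ should emerge after optimizing this gain against the Noether-Severi bound.

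The main obstacle, I anticipate, is the slope computation itself. Producing $a > 4/3$ from a threshold as modest as $p_g(X) = 16$ requires precise control of both the rank and the degree of $f_*\CO_X(mK_X)$ for small $m$, as well as a delicate HN analysis of this sheaf on $B$; simultaneously, the base-point analysis of $|mK_X|$ restricted to a general fibre must be handled carefully to ensure that Lemma \ref{lem: nefness} really yields global nefness of $K_X - aF$ after the birational resolution, rather than merely nefness off a codimension-one locus.
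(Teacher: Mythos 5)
Your overall architecture matches the paper's in outline (reduce to $h^2(X,\CO_X)>0$, prove nefness of $K_X - aF$ via direct images, combine with the Noether--Severi bound), but your finishing step has a genuine gap that would sink the argument. With $q(X)=1$ and $h^2(X,\CO_X)>0$, the paper first pins down $h^2(X,\CO_X)=1$ exactly (not merely $\le 2$) and shows $\dim\Sigma = 2$ via Fujita's decomposition of the rank-two sheaf $f_*\omega_X$ (Lemma \ref{lem: h2 not 0}); the Noether--Severi inequality then gives $K_X^3 \ge \frac{4}{3}(p_g(X)-1)$, short of the target by $\frac{4}{3}$. Your plan to absorb this deficit by pairing nefness of $K_X - aF$ with Noether--Severi via intersection inequalities cannot work numerically: since $K_F^2 = 1$, inequalities like $\left((K_X - aF)^2\cdot K_X\right)\ge 0$ only yield bounds of the form $K_X^3 \ge 3a$, and no combination of a bound of that shape with $K_X^3 \ge \frac{4}{3}(p_g(X)-1)$ exceeds $\frac{4}{3}(p_g(X)-1)$ for large $p_g(X)$. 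The missing ingredient is Proposition \ref{prop: HZ2} (\cite[Remark 2.12]{Hu_Zhang2}): when $\dim\Sigma = 2$ and $d = p_g(X)-1$, nefness of $K_X - cF$ upgrades \emph{additively} to $K_X^3 \ge \frac{4}{3}(p_g(X)-1) + c$; the case $d \ge p_g(X)$ is disposed of by Proposition \ref{prop: HZ1}(3). This refined canonical-map estimate, not a formal convex combination, is where $\frac{1}{33} = \frac{15}{11} - \frac{4}{3}$ comes from.

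Your mechanism for producing $a$ is also too weak for the $p_g(X)\ge 16$ case. Harder--Narasimhan/slope analysis of $f_*\CO_X(mK_X)$ for small fixed $m$, fed into Lemma \ref{lem: nefness}, is precisely the paper's \emph{Gorenstein} route, and it plateaus at $a = \frac{3}{4}$ (Proposition \ref{prop: 4K-3F nef}: $4K_X - 3F$ nef); even then the Gorenstein conclusion needs the parity of $K_X^3$ (an even integer) to round $\frac{4}{3}p_g(X) - \frac{7}{12}$ up to $\frac{4}{3}p_g(X)$, plus separate arguments for $p_g(X)\le 3$ --- none of which appear in your plan. To get $a > \frac{4}{3}$, the paper uses an entirely different device: the log canonical threshold bound $\mathrm{lct}(F;\Delta)\ge \frac{1}{10}$ for $(1,2)$-surfaces (Proposition \ref{prop: lct}) to build rank-preserving ample subsheaves of $f_*\CO_X((m+1)K_X - mF)$, an induction giving $K_X - F$ nef once $e = p_g(X)-1 \ge 11$ (Lemma \ref{lem: K-F nef}), and a bootstrap through \'etale base changes $B_n \to B$ showing that $K_X - aF$ nef implies $K_X - \frac{9a+e}{20}F$ nef; iterating to the fixed point yields $K_X - \frac{e}{11}F$ nef (Lemma \ref{lem: more nefness}, Proposition \ref{prop: final nefness}), so $a$ grows linearly in $p_g(X)$ and $a \ge \frac{15}{11} > \frac{4}{3}$ when $p_g(X)\ge 16$. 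A plain Riemann--Roch computation of $\deg f_*\CO_X(mK_X)$ gives no control on the \emph{minimal} HN slope and has no mechanism to force this linear growth. (Your worry about the horizontal base locus was a correct instinct: the paper handles the section $\Gamma$ via $(K_X\cdot\Gamma)\ge \frac{d}{3}$, Proposition \ref{prop: HZ1}(2).)
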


The rest of this section is devoted to the proof.

\subsection{General settings} \label{subsection: setting (1,2)-surface}

Suppose that $p_g(X) \ge 2$. Take a birational modification $\pi: X' \to X$ such that $X'$ is smooth and $|M| = \movable|\rounddown{\pi^*K_X} |$ is base point free.
Then we have the following commutative diagram
$$
\xymatrix{
	& & X' \ar[d]_{\pi} \ar[lld]_{f'} \ar[rr]^{\psi} \ar[drr]^{\phi_{M}} & &  \Sigma' \ar[d]^{\tau}  \\
	B & & X  \ar@{-->}[rr]_{\phi_{K_X}} \ar[ll]^f  & & \Sigma         
}
$$
where $\Sigma$ is the canonical image of $X$, $\phi_M: X' \to \Sigma$ is the morphism induced by $|M|$, $X' \stackrel {\psi}\rightarrow \Sigma' \stackrel{\tau} \rightarrow \Sigma$ is the Stein factorization of $\phi_M$, and $f' = f \circ \pi$. Denote by $F'$ a general fibre of $f'$, and write $d = (\deg \tau) \cdot (\deg \Sigma)$.

We start from the following lemma.

\begin{lemma} \label{lem: h2 not 0}
	If $h^2(X, \CO_X) > 0$, then
	$$
	f_* \omega_X = \CO_B \oplus \CO_B(L),
	$$ 
	where $L$ is an effective divisor on $B$. In particular, if $p_g(X) \ge 3$, then $h^2(X, \CO_X) = 1$. Moreover, if $p_g(X) \ge 3$, then $\Sigma$ is a surface.
\end{lemma}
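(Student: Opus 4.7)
The plan is to use the Leray spectral sequence and Serre duality to translate $h^2(X, \CO_X) > 0$ into a nontriviality condition for $h^1$ of $f_*\omega_X$ on the elliptic curve $B$, and then read off the splitting from the classification of vector bundles on elliptic curves in Section~2.3. Since $F$ is a $(1,2)$-surface, $q(F) = 0$, so by Koll\'ar's torsion-freeness theorem \cite[Theorem~2.1]{Kollar_Higher_direct_image}, $R^1 f_*\omega_X$ is torsion-free of generic rank $q(F) = 0$ on $B$, hence zero. Combined with Serre duality $h^2(X, \CO_X) = h^1(X, \omega_X)$, the Leray spectral sequence gives
$$
h^2(X, \CO_X) = h^1(B, f_*\omega_X).
$$

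Now $f_*\omega_X = f_*\omega_{X/B}$ is a rank-two nef vector bundle on $B$ by \cite[Theorem~2.7]{Fujita}, so each indecomposable summand has non-negative degree. On an elliptic curve, an indecomposable summand contributes to $h^1$ only when it is an Atiyah bundle $F_r$ with trivial twist; if $f_*\omega_X$ were itself indecomposable of rank two, it would be $F_2$, which has $h^0(B, F_2) = 1$, contradicting the standing hypothesis $p_g(X) \ge 2$ of this subsection. Hence $f_*\omega_X = \CO_B \oplus M$ for a nef line bundle $M$, with the $\CO_B$-summand providing the nonzero $h^1$. The identity $p_g(X) = 1 + h^0(B, M) \ge 2$ forces $M \cong \CO_B(L)$ for an effective divisor $L$, proving the first claim. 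When $p_g(X) \ge 3$, Riemann--Roch on $B$ gives $\deg L = p_g(X) - 1 \ge 2$, so $h^1(B, M) = 0$ and $h^2(X, \CO_X) = h^1(B, \CO_B) = 1$.

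For the final assertion that $\Sigma$ is a surface when $p_g(X) \ge 3$, the plan is to exhibit $\Sigma$ explicitly as a cone via the splitting. Pick a generator $s_0$ of $H^0(B, \CO_B)$ and a basis $t_1, \dots, t_{p_g-1}$ of $H^0(B, \CO_B(L))$, yielding a basis of $H^0(X, \omega_X)$ through the decomposition. For a general $b \in B$, the splitting of $f_*\omega_X$ restricts to a splitting $H^0(F_b, \omega_{F_b}) = \CC\,\alpha_b \oplus \CC\,\beta_b$, so that $s_0|_{F_b} = \alpha_b$ and $t_i|_{F_b} = t_i(b)\,\beta_b$. Consequently $\phi_{K_X}(F_b)$ lies on the line $\ell_b \subset \PP^{p_g-1}$ joining the fixed vertex $P_0 = [1:0:\cdots:0]$ to the moving point $Q(b) = [0 : t_1(b) : \cdots : t_{p_g-1}(b)]$. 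As $b$ varies, $Q(b)$ traces out the image of $B$ under the morphism defined by $|L|$, which is a \emph{curve} in $\PP^{p_g-2}$ because $\dim |L| = \deg L - 1 \ge 1$. Hence $\Sigma = \bigcup_b \ell_b$ is the cone over this curve with vertex $P_0$, and $\dim \Sigma = 2$. The main subtlety, which I would handle first, is verifying that $|L|$ is base-point free (which holds for any $L$ of degree $\ge 2$ on an elliptic curve), so that $Q(b)$ is well-defined for general $b$ and varies nontrivially.
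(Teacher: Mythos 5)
Your argument is correct, and it splits into a part that matches the paper and a part that is genuinely different. For the first two assertions you follow the paper's route exactly: Koll\'ar torsion-freeness plus $q(F)=0$ kills $R^1f_*\omega_X$, Leray gives $h^2(X,\CO_X)=h^1(B,f_*\omega_X)$, and $\deg L = p_g(X)-1\ge 2$ yields $h^2(X,\CO_X)=1$; the only difference is that the paper obtains the splitting $f_*\omega_X=\CO_B\oplus\CO_B(L)$ by quoting Fujita's decomposition \cite[Theorem 3.1]{Fujita}, whereas you rederive it in this rank-two, elliptic-base case from Atiyah's classification (nef summands have degree $\ge 0$, only untwisted Atiyah bundles contribute to $h^1$, and $F_2$ is excluded by $p_g(X)\ge 2$) --- same content, but self-contained given \S 2.3 of the paper. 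For the claim that $\Sigma$ is a surface your route diverges: the paper argues via the projection formula that $h^0(X,K_X-F)=h^0(B,L-P)=p_g(X)-2$, so the restriction $H^0(X,K_X)\to H^0(F,K_F)$ is surjective, hence $\dim\phi_{K_X}(F)=1$, and then concludes $\dim\Sigma=2$ from $h^0(B,L)\ge 2$ and $K_X-f^*L\ge 0$; your cone construction proves the same statement more explicitly and in fact yields more, namely that $\Sigma$ is the cone with vertex $P_0$ (from the trivial summand) over the image of $B$ under $|L|$ --- a description the paper never makes explicit, and which is consistent with its later use of $\deg\Sigma\ge p_g(X)-2$. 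Two small points you should make explicit to close your version: the fibrewise splitting $H^0(F_b,K_{F_b})=\CC\,\alpha_b\oplus\CC\,\beta_b$ needs cohomology and base change at general $b$, together with the remark that the terminal singularities of $X$ are isolated, so a general fibre $F_b$ lies in the smooth locus and $K_X|_{F_b}=K_{F_b}$; and the image of $F_b$ is the whole line $\ell_b$, not a point, because $\alpha_b,\beta_b$ span $H^0(F_b,K_{F_b})$, so $[\alpha_b:\beta_b]$ is the nonconstant canonical pencil of the $(1,2)$-surface $F_b$ --- this, with the nonconstancy of $Q(b)$, is what gives $\dim\Sigma\ge 2$, while the one-parameter family of curves $\ell_b$ caps it at $2$. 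Both points are routine and implicitly present in the paper's own argument.
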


\begin{proof}
	By the assumption, we have $h^1(X, \omega_{X}) > 0$. Since $h^1(F, K_F) = 0$, we know that $R^1 f_* \omega_X = 0$ by the torsion freeness of $R^1f_* \omega_X$. Thus by the Leray spectral sequence, $h^1(B, f_*\omega_X) = h^1(X, \omega_X) = h^2(X, \CO_X) > 0$.  Since $\rank f_*\omega_X = p_g(F) = 2$, by Fujita's decomposition \cite[Theorem 3.1]{Fujita}, we may write
	$$
	f_* \omega_X = \CO_B \oplus \CO_B(L),
	$$
	where $L$ is an effective divisor on $B$. If $p_g(X) \ge 3$, by the Riemann-Roch theorem, $\deg L = p_g(X) - 1 \ge 2$. Thus 
	$$
	h^2(X, \CO_X) = h^1(B, f_*\omega_X) = h^1(B, \CO_B) = 1.
	$$
	Moreover, if $p_g(X) \ge 3$, by the projection formula, we deduce that
	$$
	h^0(X, K_X-F) = h^0(B, f_*\CO_X(K_X - F)) = h^0(B, L-P) = p_g(X) - 2,
	$$
	where $P = f(F)$. Therefore, the natural restriction $H^0(X, K_X) \to H^0(F, K_F)$ is surjective. Thus $\dim \phi_{K_X}(F) = 1$. Since $h^0(B, L) \ge 2$ and $h^0(B, K_X - f^*L) > 0$, we know that $\Sigma$ is a surface.
\end{proof}


Since $F$ is a minimal $(1, 2)$-surface, $|K_F|$ has a unique base point. Therefore, the horizontal part of the base locus of the relative canonical map of $X$ with respect to $f$ is a section $\Gamma$ whose intersection $\Gamma \cap F$ with $F$ is just the base point of $|K_F|$. 


\begin{prop} \label{prop: HZ1}
	Suppose that $p_g(X) \ge 4$ and $\dim \Sigma = 2$. Then the following inequalities hold:
	\begin{itemize}
		\item [(1)] $d \ge \min \{ 2p_g(X)-4, \, p_g(X) - 1 \} = p_g(X) - 1 \ge 3$;
		\item [(2)] $\displaystyle{(K_X \cdot \Gamma) \ge \frac{1}{3} d}$
		\item [(3)] $\displaystyle{K_X^3 \ge \frac{4}{3} d }$.
	\end{itemize}
\end{prop}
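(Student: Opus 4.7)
The proof rests on the geometric picture forced by $F$ being a $(1,2)$-surface. Since $p_g(F)=2$ and $K_F^2=1$, the canonical map $\phi_{K_F}$ of $F$ has image a line and $|K_F|$ has a unique base point. Hence for a general fibre $F'$ of $f'$ the image $\ell_{F'}:=\phi_M(F')\subset\Sigma$ is a line in $\PP^{p_g(X)-1}$, and as $b$ varies over $B$ the lines $\ell_{F'_b}$ sweep out $\Sigma$. The horizontal base locus section $\Gamma$ meets each $F$ exactly at the base point of $|K_F|$, and on $X'$ its strict transform $\Gamma'$ meets $F'$ at a single point on the exceptional curve $E\cong\ell_{F'}$ over that base point.

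For (1) I would distinguish two cases by $\deg\tau$. If $\deg\tau\ge 2$, the non-degeneracy bound $\deg\Sigma\ge p_g(X)-2$ gives
\begin{equation*}
d=\deg\tau\cdot\deg\Sigma\ge 2(p_g(X)-2)\ge p_g(X)-1.
\end{equation*}
If $\tau$ is birational, then $d=\deg\Sigma$, and I would rule out the possibility $\deg\Sigma=p_g(X)-2$. Surfaces of minimal degree in $\PP^{p_g(X)-1}$ are either the Veronese surface (which contains no lines at all) or rational normal scrolls (all of whose families of lines are parametrized by rational curves). In either case, the $B$-parametrized family $\{\ell_{F'_b}\}$ is forced to be constant because $g(B)=1$, so all $F'_b$ would map into a single line, contradicting $\dim\phi_M(X')=2$.

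For (2) and (3) I would carry out intersection-theoretic computations on $X'$. Writing $\pi^*K_X=M+Z$ with $Z$ effective, three identities control everything: $(K_X)^2\cdot F=K_F^2=1$ coming from the $(1,2)$-structure, $M^3=0$ from $\dim\phi_M(X')=2$, and $(M|_{F'})^2=(K_F-E)^2=0$ on the blowup $F'$ of $F$ at the base point of $|K_F|$. The image curve $\Delta:=\phi_M(\Gamma')\subset\Sigma$ is a \emph{section} of the line family $\{\ell_{F'_b}\}$: it meets every $\ell_{F'_b}$ in exactly one point. Its degree in $\PP^{p_g(X)-1}$ controls $M\cdot\Gamma'$ and therefore $K_X\cdot\Gamma$; together with a bound of $3$ on the multiplicity with which $\Gamma'$ appears in the fixed part $Z$, this yields $(K_X\cdot\Gamma)\ge\tfrac{1}{3}d$. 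For (3), the inequality $(\pi^*K_X)^3\ge(\pi^*K_X)^2\cdot M=(\pi^*K_X|_S)^2$ applied to a general $S\in|M|$, combined with (1) and the bound from (2), produces $K_X^3\ge\tfrac{4}{3}d$.

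The real technical heart is part (2), specifically pinning down the constant $\tfrac{1}{3}$: this requires a careful accounting of how the section curve $\Delta$ sits on $\Sigma$ and of the multiplicity with which $\Gamma'$ occurs in $Z$. The analysis will likely use the elliptic line-family structure on $\Sigma$ inherited from $B$ together with the intersection-theoretic identity $M\cdot F'\cdot F'=0$ recorded above; once (2) is in hand, (3) follows by a fairly direct intersection calculation on $S$.
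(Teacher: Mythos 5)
Your proposal has a genuine gap, and it sits exactly where you place the weight of part (1). For the birational case you argue that if $\deg\Sigma = p_g(X)-2$, then $\Sigma$ is a Veronese or a scroll, and ``the $B$-parametrized family $\{\ell_{F'_b}\}$ is forced to be constant because $g(B)=1$'' since the line families on such surfaces are rationally parametrized. This inference is false: an elliptic curve admits plenty of non-constant maps to $\PP^1$ (every elliptic curve is a degree-two cover of $\PP^1$), so the induced map $\eta\colon B \to \PP^1$ to the ruling parameter is merely forced to have degree $\ge 2$, not to be constant. Ruling out $\deg\eta \ge 2$ requires a further argument you do not supply; for instance, one can use connectedness of the fibres of the Stein factorization $\psi\colon X' \to \Sigma'$: for general $t$, the two (or more) fibres $F'_{b}$, $b \in \eta^{-1}(t)$, map onto the same line $\ell_t$, so a connected general fibre of $\psi$ would have to contain an $f'$-horizontal curve contracted by $\phi_M$, which would force every line $\ell_{\eta(b)}$ to pass through a general point of $\Sigma$ --- absurd for a sweeping family. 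Note this is also precisely where $g(B)=1$ versus $g(B)=0$ matters (a degree-one $\eta$ is possible only for rational $B$, which is why the minimal-degree case does survive when $q=0$). You also omit cones over rational normal curves from the minimal-degree classification, though the same dichotomy applies to them. For comparison, the paper does not reprove any of this: its proof of the proposition is a citation to Lemma 2.3(1) and Proposition 2.5(2) of the authors' earlier paper [Hu--Zhang], where these points are handled.

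Parts (2) and (3) of your proposal are a plan rather than a proof. The entire content of (2) is the constant $\tfrac{1}{3}$, which hinges on bounding the multiplicity with which $\Gamma'$ occurs in the fixed part $Z$ of $|\pi^*K_X|$ and on controlling the negative self-intersection terms in $\bigl(\pi^*K_X\cdot\Gamma'\bigr) = \bigl((M+Z)\cdot\Gamma'\bigr)$, where $Z\cdot\Gamma'$ can be negative exactly because $\Gamma'\subseteq\supp Z$; you assert a multiplicity bound of $3$ without justification and explicitly defer the ``careful accounting.'' Likewise, in (3) the chain $K_X^3 \ge \bigl((\pi^*K_X)^2\cdot M\bigr) \ge \bigl((\pi^*K_X)\cdot M^2\bigr) = d\bigl((\pi^*K_X)\cdot C\bigr) = d$ (using $(K_F\cdot C_0)=1$ for the canonical pencil of a $(1,2)$-surface) only yields $K_X^3 \ge d$; the improvement to $\tfrac{4}{3}d$ requires feeding the unproven estimate (2) back into the term $\bigl((\pi^*K_X)\cdot Z\cdot M\bigr)$, so (3) inherits the gap. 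Your geometric setup (fibres mapping to lines, the section $\Gamma$ of base points, $(M|_{F'})^2=0$) is the correct frame and matches the cited source, but as it stands the proposal neither closes part (1) nor establishes the quantitative heart of parts (2) and (3).
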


\begin{proof}
	The first inequality follows from \cite[Lemma 2.3 (1)]{Hu_Zhang2} and the assumption that $p_g(X) \ge 4$ and $g(B) = 1$. The second and the third one follow from \cite[Proposition 2.5 (2)]{Hu_Zhang2}.
\end{proof}

\begin{prop} \cite[Remark 2.12]{Hu_Zhang2} \label{prop: HZ2}
	Suppose that $p_g(X) \ge 4$, $\dim \Sigma = 2$, $d = p_g(X) - 1$ and $K_X - cF$ is nef for some $c \ge 0$. Then
	$$
	K_X^3 \ge \frac{4}{3}\left(p_g(X) - 1\right) + c.
	$$
\end{prop}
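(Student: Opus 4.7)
The plan is to upgrade Proposition \ref{prop: HZ1}(3), which already yields $K_X^3 \geq \frac{4}{3}(p_g(X) - 1)$ under the hypothesis $d = p_g(X)-1$, by tracking exactly what the extra nefness $K_X - cF$ contributes on top of the bound coming from HZ1(3).

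The most natural starting point is an elementary intersection-theoretic decomposition. Set $L := K_X - cF$ and use that $F$ is a fibre of $f$ (so $F^2 = 0 = F^3$) while $F$ itself is a $(1,2)$-surface (so $K_X^2 \cdot F = K_F^2 = 1$). Expanding the cube gives
\begin{equation*}
K_X^3 \;=\; L^3 + 3c\, L^2 \cdot F + 3c^2\, L \cdot F^2 + c^3 F^3 \;=\; L^3 + 3c.
\end{equation*}
Consequently the desired inequality reduces to $L^3 \geq \frac{4}{3}(p_g(X) - 1) - 2c$.

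Next I would exploit that the nef divisor $L$ carries precisely the same fibrewise numerical data as $K_X$: one has $L \cdot F^2 = 0$, $L^2 \cdot F = 1$, and $L|_{F'} = K_{F'}$ on a general resolution fibre, since $F|_{F'} = 0$. The only pairings in which $L$ differs from $K_X$ are those against $f$-horizontal cycles, and by Proposition \ref{prop: HZ1}(2) the section $\Gamma$ still obeys the linear estimate
\begin{equation*}
L \cdot \Gamma \;=\; K_X \cdot \Gamma - c \;\geq\; \tfrac{d}{3} - c.
\end{equation*}
With this input in hand, I would rerun the proof of Proposition \ref{prop: HZ1}(3) with $L$ in place of $K_X$: since that proof constructs the bound $K_X^3 \geq \frac{4}{3}d$ out of the numerical data $K_X^2\cdot F = 1$, $F\cdot\Gamma = 1$, $K_X\cdot\Gamma \geq d/3$, and the slope-type estimate for $f$, the analogous computation with $L$ should yield $L^3 \geq \frac{4}{3}d - 2c$ exactly. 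Combined with the elementary identity above, this delivers $K_X^3 \geq \frac{4}{3}(p_g(X)-1) + c$.

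The main obstacle is that parts of the argument for Proposition \ref{prop: HZ1}(3) may use properties that are genuinely special to the canonical divisor, such as positivity of direct images $f_*\omega_{X/B}$ recalled in Subsection 2.3. For a general nef divisor $L$ the relevant positivity has to be reinstated by hand, for example via a Harder--Narasimhan / slope analysis of $f_*\CO_X(K_X)$ twisted by $\CO(-cF)$ as in Lemma \ref{lem: nefness}. This is precisely the content of Remark 2.12 of the companion paper Hu--Zhang2, and because the modification enters linearly in $c$ (the threshold $-cF$ merely shifts the Harder--Narasimhan slopes), the resulting refinement on the right-hand side is exactly the expected term $-2c$.
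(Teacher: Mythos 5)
Your opening reduction is fine: since $F$ moves in a fibration, $F^2=0$ as a cycle, and a general fibre lies in the smooth locus of $X$, so $L^2\cdot F=K_X^2\cdot F=K_F^2=1$ and indeed $K_X^3=L^3+3c$ for $L=K_X-cF$. But the heart of the argument --- the claim $L^3\ge \frac{4}{3}d-2c$ --- is asserted, not proven, and the assertion is reverse-engineered from the desired conclusion. Note first that the paper itself gives no proof of this proposition: it is imported verbatim from \cite[Remark 2.12]{Hu\_Zhang2}, just as Proposition \ref{prop: HZ1} is imported from \cite[Proposition 2.5]{Hu\_Zhang2}. So ``rerunning the proof of Proposition \ref{prop: HZ1}(3) with $L$ in place of $K_X$'' is not an operation you can actually perform within this paper; that proof is a black box here, and your final paragraph concedes as much before waving the difficulty away with the claim that ``the modification enters linearly in $c$'' and therefore costs ``exactly $-2c$.'' Nothing in your write-up establishes that coefficient; it is precisely the number that makes the bookkeeping come out right.

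The danger is not hypothetical. A naive rerun of the visible skeleton does not reproduce the constant $\frac{4}{3}$. Writing $\pi^*K_X=M+Z$ with $Z\ge 0$ and using $M|_S\equiv dC$ with $C$ an $f$-vertical genus-two curve (so $F\cdot C=0$ and $(K_X\cdot \pi_*C)=K_F^2=1$), the straightforward chain
$$
L^3\ \ge\ (\pi^*L)^2\cdot M - c\ \ge\ (\pi^*L)\cdot M^2 - 2c\ =\ d\,(L\cdot \pi_*C) - 2c\ =\ d - 2c
$$
yields only $L^3\ge d-2c$, i.e.\ $K_X^3\ge d+c$. The improvement from $d$ to $\frac{4}{3}d$ in \cite[Proposition 2.5]{Hu\_Zhang2} comes from a finer analysis involving the section $\Gamma$ and the base point of $|K_F|$, and it is exactly there that one must verify how the twist by $-cF$ interacts with each step (your own estimate $L\cdot\Gamma\ge \frac{d}{3}-c$ shows the horizontal terms \emph{do} degrade). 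Whether the total loss across that finer argument is $2c$, or something worse, is the entire content of the cited remark, and your proposal does not address it. As written, this is a strategy sketch with the decisive quantitative step missing.
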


\subsection{The special case} 
In this subsection, we assume that
$$
f_* \omega_X = \CO_B \oplus \CO_B(L),
$$
where $e: = \deg L = p_g(X) - 1 \ge 3$. Under this assumption, we have
$$
K_X \sim f^*L + D \equiv eF + D.
$$
where $D$ is an effective divisor. Since $X$ has terminal singularities, we may write
$$
K_{X'} = \pi^*K_X + E_{\pi},
$$
where $E_{\pi}$ is an effective $\pi$-exceptional $\QQ$-divisor. Up to a birational modification of $X'$, we may further assume that $\pi^*D + E_{\pi}$ is simple normal crossing.

\subsubsection{$X$ is Gorenstein} We first consider the case when $X$ is Gorenstein. In this case, $D$ is a Cartier divisor. Write $D' = \pi^*D$. Then we have the following proposition in which the Gorenstein assumption is essential.

\begin{prop} \cite[Theorem 3.2]{Catanese_Chen_Zhang} \label{prop: 3/10}
	For any $t > \frac{3}{10}$, we have
	$$
	h^0 \left(F', K_{F'} + \roundup{t D'|_{F'}}\right) \ge 3.
	$$
\end{prop}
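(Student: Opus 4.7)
The plan is to apply Kawamata--Viehweg vanishing on the general fibre $F'$ and then reduce the claim to a nonnegativity check via Riemann--Roch.

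Since $X$ is Gorenstein with only isolated terminal singularities and $\pi$ is chosen to make $\pi^*D + E_\pi$ simple normal crossing, the restriction $\pi|_{F'} : F' \to F$ is a (possibly trivial) sequence of blow-ups of a general Albanese fibre $F$, which is a minimal $(1,2)$-surface. In particular $F'$ is a smooth surface of general type birational to $F$, with $\chi(\mathcal{O}_{F'}) = \chi(\mathcal{O}_F) = 3$. Adjunction applied to $K_X \sim f^*L + D$ and restriction to $F$ gives $D|_F \sim K_F$, hence $D'|_{F'} = \pi|_{F'}^*(D|_F)$ is big and nef on $F'$, and by construction its support lies in a simple normal crossing divisor.

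By the Kawamata--Viehweg vanishing theorem, $h^i\bigl(F',\, K_{F'} + \roundup{tD'|_{F'}}\bigr) = 0$ for all $i > 0$ and $t > 0$, so Riemann--Roch yields
\begin{equation*}
h^0\bigl(F',\, K_{F'} + \roundup{tD'|_{F'}}\bigr) = 3 + \tfrac{1}{2}\,\roundup{tD'|_{F'}}\cdot\bigl(K_{F'} + \roundup{tD'|_{F'}}\bigr),
\end{equation*}
and the statement reduces to showing that the intersection term is nonnegative for $t > 3/10$.

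To prove this nonnegativity I would carry out a combinatorial analysis on the irreducible decomposition $D|_F = \sum n_i C_i$. The identity $K_F \cdot D|_F = K_F^2 = 1$ together with the nefness of $K_F$ forces exactly one component, say $C_0$, to be reduced with $K_F \cdot C_0 = 1$, while every other $C_i$ is a $(-2)$-curve; the remaining constraint $(D|_F)^2 = 1$ then restricts the intersection matrix of the $C_i$. Pulling back to $F'$ and handling the exceptional divisors of $\pi|_{F'}$ separately, the main obstacle becomes a case-by-case check on the admissible multiplicities: the ceiling $\roundup{tn_i}$ can drop substantially below $tn_i$ when some $n_i$ is large, and the $(-2)$-curve components contribute negatively to $\roundup{tD'|_{F'}}^2$. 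The threshold $3/10$ should emerge as the smallest value of $t$ for which the positive contribution from $K_{F'}$ meeting the strict transform of $C_0$ dominates the negative contributions from $(-2)$-curve configurations in every admissible decomposition of $D|_F$.
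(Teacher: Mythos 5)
The first thing to say is that the paper contains no proof of this proposition: it is imported verbatim from Catanese--Chen--Zhang [Theorem 3.2], and the entire content --- in particular the constant $\frac{3}{10}$ --- lives in that cited proof, which is an explicit analysis of the members of the canonical pencil of a $(1,2)$-surface and their resolutions. Your first two paragraphs are correct and set up a legitimate reduction: $\chi(\CO_{F'}) = \chi(\CO_F) = 3$ since $F$ is a $(1,2)$-surface, $D|_F \sim K_F$ is nef and big, the support of $D'|_{F'}$ is simple normal crossing because $\pi^*D + E_\pi$ is and $F'$ is general, so Kawamata--Viehweg kills $h^1$ and $h^2$, and Riemann--Roch gives exactly the identity you state; hence the proposition is \emph{equivalent} to the nonnegativity of $\roundup{tD'|_{F'}}\cdot\bigl(K_{F'} + \roundup{tD'|_{F'}}\bigr)$. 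One refinement worth adding: $h^0\bigl(F', K_{F'} + \roundup{tD'|_{F'}}\bigr)$ is nondecreasing in $t$ (twisting by an effective divisor never decreases $h^0$), so it suffices to treat $t$ slightly larger than $\frac{3}{10}$; this matters because $G \mapsto G\cdot(K_{F'}+G)$ is \emph{not} monotone in $G$ --- e.g.\ raising the coefficient of a $(-1)$- or $(-2)$-curve disjoint from the rest of $G$ decreases it --- so your formulation ``nonnegative for all $t > 3/10$'' cannot be checked only near the threshold without this remark.

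The genuine gap is that your third paragraph is a plan, not a proof: it establishes nonnegativity in no configuration, and ``the threshold $3/10$ should emerge'' concedes that the one step where the hypothesis $t > \frac{3}{10}$ actually enters has not been carried out. The case analysis is also substantially larger than your constraints suggest. Your structural claims are right ($n_0 = 1$, $K_F\cdot C_0 = 1$, all other components $(-2)$-curves by adjunction and the index theorem), but the multiplicities $n_i$ need not be small: when the canonical model of $F$ has Du Val singularities lying on the canonical curve, $D|_F$ contains fundamental cycles of ADE configurations of $(-2)$-curves, with coefficients as large as $6$ (type $E_8$); a computation such as your single-$(-2)$-curve constraint (which forces $n \le 1$) is misleading for chains. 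On top of this come the infinitely near multiplicities created by log-resolving the singular points of $D|_F$ (including the base point of $|K_F|$ and tangencies with $(-2)$-chains), where $\roundup{t\,D'|_{F'}}$ is no longer a pullback and its self-intersection genuinely drops. Since the constant $\frac{3}{10}$ must be reached exactly by these estimates, an argument that ``the positive contribution from $K_{F'}\cdot\widetilde{C}_0$ dominates'' has to be made tight and uniform over all members of $|K_F|$ and all their log resolutions --- and that uniform verification, which is precisely the substance of the cited theorem, is absent from your proposal.
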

\begin{lemma} \label{lem: K+D IT0}
	The vector bundle 
	$$
	\CE: = f'_*\CO_{X'} \left(K_{X'} + \roundup{ \pi^*K_X - 2F' - \frac{2}{e} D'} \right)
	$$
	on $B$ is ample with $\rank \CE \ge 3$. In particular, $\mu_{\min}(\CE) > 0$.
\end{lemma}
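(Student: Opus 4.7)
The plan is to reduce both assertions---ampleness of $\CE$ and the rank lower bound---to results already established in the paper. Denote the $\QQ$-divisor inside the roundup by $\Delta := \pi^*K_X - 2F' - \frac{2}{e}D'$, so that $\CE = f'_*\CO_{X'}(K_{X'} + \roundup{\Delta})$.

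For ampleness I will verify the hypotheses of Corollary \ref{coro: vector bundle ample}. Since $K_X \sim f^*L + D$ with $\deg L = e$ gives $D \equiv K_X - eF$, pulling back and substituting yields the numerical equivalence
$$
\Delta \equiv \pi^*K_X - 2F' - \frac{2}{e}(\pi^*K_X - eF') = \frac{e-2}{e}\pi^*K_X.
$$
Since $e \ge 3$ and $\pi^*K_X$ is nef and big on the minimal general type $3$-fold $X$, $\Delta$ is nef and big. Its fractional part coincides with the fractional part of $-\frac{2}{e}D'$, which is supported on $\supp D'$ and is simple normal crossing by the standing assumption that $\pi^*D + E_{\pi}$ is so. Corollary \ref{coro: vector bundle ample} then delivers ampleness of $\CE$, and on the elliptic curve $B$ this immediately gives $\mu_{\min}(\CE) > 0$ since every quotient of an ample bundle on a curve has positive degree.

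For the rank lower bound, $\CE$ is torsion-free on the smooth curve $B$ and therefore locally free, with rank equal to
$$
h^0\bigl(F', (K_{X'} + \roundup{\Delta})|_{F'}\bigr) = h^0\bigl(F', K_{F'} + \roundup{\Delta|_{F'}}\bigr),
$$
using adjunction ($K_{X'}|_{F'} = K_{F'}$ since $F'|_{F'} = 0$) and compatibility of roundup with restriction for general $F'$. The key simplification comes from restricting $K_X \sim f^*L + D$ to a general fibre $F$: since $f^*L|_F = 0$ for $F$ off $\supp L$, this gives $D|_F \sim K_F$, and hence $\pi_F^*(D|_F) \sim \pi_F^*K_F$ as integral divisors on $F'$, where $\pi_F = \pi|_{F'}$. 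Using $\pi^*K_X|_{F'} = \pi_F^*K_F$ and $D'|_{F'} = \pi_F^*(D|_F)$, this yields
$$
\Delta|_{F'} \sim \frac{e-2}{e} D'|_{F'}
$$
as $\QQ$-divisors, their difference being an integral principal divisor. Consequently the line bundles $\CO_{F'}(K_{F'} + \roundup{\Delta|_{F'}})$ and $\CO_{F'}(K_{F'} + \roundup{\tfrac{e-2}{e} D'|_{F'}})$ are isomorphic, and Proposition \ref{prop: 3/10} applied with $t = \frac{e-2}{e}$---which satisfies $t \ge \frac{1}{3} > \frac{3}{10}$ precisely when $e \ge 3$---gives $\rank \CE \ge 3$.

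I expect the most delicate point to be the identification of $\Delta|_{F'}$ with $\frac{e-2}{e} D'|_{F'}$ up to an integral principal divisor, since it requires tracking a specific linear equivalence (not merely numerical) through the pullback by $\pi_F$. Once this is made explicit, both conclusions of the lemma follow from the cited results.
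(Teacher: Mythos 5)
Your proposal is correct and follows essentially the same route as the paper's proof: ampleness via the numerical identity $\pi^*K_X - 2F' - \frac{2}{e}D' \equiv \left(1-\frac{2}{e}\right)\pi^*K_X$ together with Corollary \ref{coro: vector bundle ample}, and the rank bound via restriction to a general fibre and Proposition \ref{prop: 3/10} with $t = 1 - \frac{2}{e} \ge \frac{1}{3} > \frac{3}{10}$. Your explicit tracking of the linear equivalence $\pi^*K_X|_{F'} \sim D'|_{F'}$ (coming from $f'^*L|_{F'} \sim 0$) just makes precise the step the paper records as the inequality $\rank \CE \ge h^0\left(F', K_{F'} + \roundup{\left(1-\frac{2}{e}\right)D'|_{F'}}\right)$.
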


\begin{proof}
	Note that the $\QQ$-divisor
	$$
	\pi^*K_X - 2F' - \frac{2}{e} D' \equiv \left(1 - \frac{2}{e} \right) \pi^*K_X
	$$
	is nef and big with simple normal crossing fractional part. By Corollary \ref{coro: vector bundle ample}, $\CE$ is ample. Thus $\mu_{\min}(\CE) > 0$. Note that 
	\begin{align*}
		\rank \CE & = h^0 \left(F', K_{F'} + \left.\roundup{ \pi^*K_X - 2F' - \frac{2}{e} D'} \right|_{F'} \right) \\
		& \ge h^0 \left(F', K_{F'} + \roundup{\left(1 - \frac{2}{e} \right) D'|_{F'}} \right)
	\end{align*}
    Since $1 - \frac{2}{e} \ge \frac{1}{3} > \frac{3}{10}$, by Proposition \ref{prop: 3/10}, we conclude that $\rank \CE \ge 3$.
\end{proof}

As a simple corollary of Lemma \ref{lem: K+D IT0}, we have

\begin{coro} \label{coro: K+D IT0}
	Let 
	$$
	\CF = f'_*\CO_{X'} \left(K_{X'} + \roundup{ \pi^*K_X - \frac{2}{e} D'} \right).
	$$
	Then $\rank \CF \ge 3$ and $\mu_{\min}(\CF) > 2$.
\end{coro}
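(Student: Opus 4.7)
The plan is to identify $\CF$ with a line-bundle twist of the sheaf $\CE$ from Lemma \ref{lem: K+D IT0}, and then read off both the rank and the slope bound from that identification.

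First I would observe that, since $F'$ is a general fibre of $f'$, we have $F' = (f')^*Q$ for some point $Q \in B$, and $F'$ is not contained in the support of the fractional part of the $\QQ$-divisor $\pi^*K_X - \frac{2}{e}D'$. Indeed, $F'$ is not $\pi$-exceptional, $\pi(F')$ is a general fibre $F$ of $f$ and so is not contained in $\supp K_X$, and $F'$ is not contained in $\supp D'$ by the genericity of $Q$. Consequently, subtracting the integer divisor $2F'$ commutes with the round-up:
$$
\roundup{\pi^*K_X - 2F' - \frac{2}{e}D'} \;=\; \roundup{\pi^*K_X - \frac{2}{e}D'} - 2F'.
$$

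Next, applying the projection formula to $f'$ (using $F' = (f')^*Q$), this yields the identification
$$
\CE \;\cong\; \CF \otimes \CO_B(-2Q).
$$
Since twisting by a line bundle preserves rank, Lemma \ref{lem: K+D IT0} gives $\rank \CF = \rank \CE \ge 3$. For the slope, Lemma \ref{lem: vector bundle tensor} (tensoring with the line bundle $\CO_B(-2Q)$, whose unique Harder--Narasimhan slope is $-2$) yields
$$
\mu_{\min}(\CE) \;=\; \mu_{\min}(\CF) + \deg \CO_B(-2Q) \;=\; \mu_{\min}(\CF) - 2.
$$
Since $\CE$ is ample by Lemma \ref{lem: K+D IT0}, in particular $\mu_{\min}(\CE) > 0$, and hence $\mu_{\min}(\CF) > 2$, which is what we wanted.

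There is no genuine obstacle in this argument; the only subtle point is verifying that $F'$ is not a component of the fractional part of $\pi^*K_X - \frac{2}{e}D'$, which follows from the genericity of $F'$, so that the round-up may be pulled through the subtraction of $2F'$. Once this is in place, the result is a direct consequence of Lemma \ref{lem: K+D IT0} and Lemma \ref{lem: vector bundle tensor}.
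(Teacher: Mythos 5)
Your proof is correct and is exactly the argument the paper intends: the paper presents the corollary as an immediate consequence of Lemma \ref{lem: K+D IT0} via the identification $\CF = \CE \otimes \CO_B(2Q)$ coming from $2F' = (f')^*(2Q)$ and the projection formula, with the rank and slope statements then following from $\rank \CE \ge 3$ and $\mu_{\min}(\CE) > 0$. Your one ``subtle point'' is in fact automatic: $\roundup{A - N} = \roundup{A} - N$ holds coefficient-wise for \emph{any} integral divisor $N$, so no genericity of $F'$ relative to the fractional part is needed.
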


\begin{lemma} \label{lem: 2K-F nef}
	One of the following two cases occurs:
	\begin{itemize}
		\item [(1)] We have 
		$
		\mu_{\min} (f_* \CO_X(2K_X)) \ge 2.
		$
		In this case, $K_X - F$ is nef.
		
		\item [(2)] We have 
		$
		f_* \CO_X(2K_X) = \CV_1 \oplus \CL_1,
		$
		where $\CV_1$ is of rank three with $\mu_{\min}(\CV_1) > 2$ and $\CL_1$ is a line bundle of degree one. In this case, $2K_X - F$ is nef.
	\end{itemize}
\end{lemma}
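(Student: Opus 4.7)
Set $R := f_*\CO_X(2K_X)$. Since the general fibre $F$ is a $(1,2)$-surface, $|2K_F|$ is base-point free and $h^0(F, 2K_F) = 4$, so $R$ is a locally free sheaf of rank $4$ on the elliptic curve $B$; the identification $R = f_*\omega_{X/B}^{\otimes 2}$ (as $\omega_B = \CO_B$) further shows $R$ is nef by Viehweg. The plan is to embed the sheaf $\CF$ from Corollary~\ref{coro: K+D IT0} into $R$ and then read off the structure of $R$ from the constraints on $\CF$. Since $X$ is Gorenstein, $\pi^*K_X$ is integral, giving
\[
K_{X'} + \roundup{\pi^*K_X - \tfrac{2}{e}D'} = 2\pi^*K_X + E_\pi - \rounddown{\tfrac{2}{e}D'} \le 2K_{X'};
\]
pushing forward through $\pi$ (using $\pi_*\omega_{X'}^{\otimes 2} = \omega_X^{\otimes 2}$, since $X$ has canonical Gorenstein singularities) and then through $f$ yields the injection $\CF \hookrightarrow R$.

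Decompose $R$ into indecomposable direct summands and write $R = R' \oplus R''$, where $R'$ collects the summands of slope $> 2$ and $R''$ collects those of slope $\le 2$. Since $\mu_{\min}(\CF) > 2$, Lemma~\ref{lem: vector bundle hom} implies that each indecomposable summand of $\CF$ cannot map nontrivially into any summand of $R''$, so $\CF \subset R'$. Combined with $\rank \CF \ge 3$ and $\rank R = 4$, this forces $\rank R'' \in \{0, 1\}$, with $R'$ of rank at least $3$ and all indecomposable summands of slope strictly $> 2$. If either $\rank R'' = 0$, or $\rank R'' = 1$ with $\deg R'' = 2$, then $\mu_{\min}(R) \ge 2$ and case~(1) holds. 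Otherwise $R = \CV_1 \oplus \CL_1$, with $\CV_1 := R'$ of rank $3$ satisfying $\mu_{\min}(\CV_1) > 2$, and $\CL_1 := R''$ a line bundle whose degree lies in $\{0, 1\}$ by nefness of $R$.

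The main obstacle is to rule out $\deg \CL_1 = 0$. My approach is to produce a second ample subsheaf of $R$ of rank $\ge 3$ with $\mu_{\min} > 1$; a natural candidate is $f'_*\CO_{X'}(K_{X'} + \roundup{\pi^*K_X - F' - \tfrac{2}{e}D'})$, essentially a twist of $\CE$ by $\CO_B(P)$ for $P = f(F)$, whose ampleness is inherited from Corollary~\ref{coro: vector bundle ample} as long as the underlying fractional divisor remains nef and big (which uses $e \ge 3$). The same Hom-vanishing argument as above would then confine this second subsheaf to the slope-$> 1$ part of $R$, forcing $\mu_{\min}(R) \ge 1$ and excluding $\deg \CL_1 = 0$. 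Once the structure of $R$ is pinned down, Lemma~\ref{lem: nefness} applied to $L = 2K_X$ concludes: base-point freeness of $|2K_F|$ on the general fibre ensures that the horizontal divisorial part of $\Delta$ is empty, so $2K_X - \mu_{\min}(R)F$ is nef as a divisor, yielding $K_X - F$ nef in case~(1) and $2K_X - F$ nef in case~(2).
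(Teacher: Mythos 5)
Your setup — the embedding $\CF \hookrightarrow R := f_*\CO_X(2K_X)$, the slope decomposition of $R$, the dichotomy $\mu_{\min}(R)\ge 2$ versus $R=\CV_1\oplus\CL_1$ with $\rank\CV_1=3$ and $\mu_{\min}(\CV_1)>2$, and the endgame via Lemma \ref{lem: nefness} (using that $\Delta_2$ is $f$-vertical because $|2K_F|$ is base point free) — all agrees with the paper. But the step you yourself flag as the main obstacle, excluding $\deg\CL_1=0$, has a genuine gap. Your second subsheaf $\CF_1 = f'_*\CO_{X'}\bigl(K_{X'}+\roundup{\pi^*K_X - F' - \frac{2}{e}D'}\bigr) \cong \CE\otimes\CO_B(P)$ does have $\rank\CF_1\ge 3$ and $\mu_{\min}(\CF_1)=\mu_{\min}(\CE)+1>1$, and Hom-vanishing does confine it to the direct sum of the slope-$(>1)$ summands of $R$. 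The problem is that in the configuration you need to rule out, namely $R=\CV_1\oplus\CL_1$ with $\deg\CL_1=0$, the slope-$(>1)$ part of $R$ is exactly $\CV_1$, which has rank $3$ (note $\rank\CF_1=\rank\CF=\rank\CV_1=3$ in this case) and accommodates $\CF_1$ without any strain, just as it accommodates $\CF$. A high-slope subsheaf constrains only the summands it lands in and gives no information about the complementary summand $\CL_1$; so ``forcing $\mu_{\min}(R)\ge 1$'' does not follow, and your nefness of $R$ via Viehweg is too weak at precisely this point, since a nef bundle may well have a degree-zero line bundle summand.

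The paper closes this gap with a positivity statement about $R$ itself rather than about a subsheaf: since $X$ is Gorenstein minimal, $f_*\CO_X(2K_X) = f'_*\CO_{X'}(K_{X'}+\pi^*K_X)$ with $\pi^*K_X$ nef, big and integral, so Corollary \ref{coro: vector bundle ample} (Kawamata--Viehweg vanishing plus Lemma \ref{lem: vector bundle IT0}) shows $R$ is \emph{ample}. Hence $\mu_{\min}(R)>0$, every direct summand has positive degree, and $\deg\CL_1<2$ forces $\deg\CL_1=1$, giving $\mu_{\min}(R)=1$ and the nefness of $2K_X-F$ via Lemma \ref{lem: nefness}. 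If you replace your $\CF_1$-argument with this ampleness observation, the rest of your proof goes through and coincides with the paper's.
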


\begin{proof}
	Since $f'_*$ is left exact, we have the following injection
	$$
	\CF := f'_*\CO_{X'} \left(K_{X'} + \roundup{ \pi^*K_X  - \frac{2}{e} D'} \right) \to f'_*\CO_{X'}(2K_{X'}) = f_* \CO_X(2K_X).
	$$
	By Corollary \ref{coro: K+D IT0}, $\rank \CF \ge 3$ and $\mu_{\min}(\CF) > 2$. Let $\CV_1$ be the saturation of $\CF$ in $f_*\CO_X(2K_X)$. Then  $\CV_1$ is a subbundle of $f_*\CO_X(2K_X)$  with $\rank \CV_1 \ge 3$ and $\mu_{\min}(\CV_1) > 2$.
	
	Consider the evaluation morphism
	$$
	\mathrm{ev}_2: f^*f_* \CO_X(2K_X) \to \CO_X(2K_X).
	$$
	Let $\Delta_2 = \mathrm{Sing}(X) \cup \mathrm{Supp}(\CO_X(2K_X)/\mathrm{Im}(\ev_2))$.
	Since $F$ is a minimal $(1, 2)$-surface, $|2K_F|$ is base point free. We deduce that $\Delta_2$ is $f$-vertical. 
	
	Suppose that $\mu_{\min} (f_* \CO_X(2K_X)) \ge 2$. By Lemma \ref{lem: nefness}, $2K_X - 2F$ is nef. Thus (1) is proved. Suppose that $\mu_{\min} (f_* \CO_X(2K_X)) < 2$. Since $\rank f_* \CO_X(2K_X) = h^0(F, 2K_F) = 4$, we deduce that $\rank \CV_1 = 3$, and $f_* \CO_X(2K_X)$ has a direct summand line bundle $\CL_1$ with $\deg \CL_1 < 2$. Write
	$$
	f_*\CO_X(2K_X) = \CE_1 \oplus \CL_1.
	$$
	Since $\mu_{\min}(\CV_1) > 2$, by Lemma \ref{lem: vector bundle hom}, $\Hom(\CV_1, \CL_1) = 0$. It follows that $\CV_1\subset \CE_1$. Since $\CV_1$ is a subbundle and $\rank \CE_1 = 3$, we deduce that $\CV_1 = \CE_1$. By Corollary \ref{coro: vector bundle ample}, $f_* \CO_X(2K_X) = f'_* \CO_{X'}(K_{X'} + \pi^*K_X)$ is ample. As a result, $\deg \CL_1 = 1$. Thus $\mu_{\min}(f_* \CO_X(2K_X)) = 1$. By Lemma \ref{lem: nefness} again, we deduce that $2K_X - F$ is nef. This proves (2).
\end{proof}

\begin{lemma} \label{lem: 3K-2F nef}
	We have 
	$
	\mu_{\min}(f_* \CO_X(3K_X)) \ge 2.
	$
	Moreover, $3K_X - 2F$ is nef.
\end{lemma}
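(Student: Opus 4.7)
The plan is to mimic the proof of Lemma~\ref{lem: 2K-F nef}: construct a full-rank subbundle of $f_*\CO_X(3K_X)$ whose minimum slope exceeds $2$, propagate this positivity to every indecomposable direct summand of $f_*\CO_X(3K_X)$ via Lemma~\ref{lem: vector bundle hom}, and then deduce nefness of $3K_X-2F$ from Lemma~\ref{lem: nefness} after handling the distinguished section of base points.

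Following the template of Lemma~\ref{lem: K+D IT0} and Corollary~\ref{coro: K+D IT0}, I would set
$$\CE_3:=f'_*\CO_{X'}\bigl(K_{X'}+\roundup{2\pi^*K_X-2F'-\tfrac{2}{e}D'}\bigr),\quad \CF_3:=\CE_3\otimes\CO_B(2P).$$
Since $2\pi^*K_X-2F'-\tfrac{2}{e}D'\equiv(2-\tfrac{2}{e})\pi^*K_X$ is nef and big with simple normal crossing fractional part, Corollary~\ref{coro: vector bundle ample} makes $\CE_3$ ample, equivalently $\mu_{\min}(\CF_3)>2$; moreover the natural inclusion $\CF_3\hookrightarrow f_*\CO_X(3K_X)$ holds. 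The main technical step is verifying $\rank\CF_3=6=\rank f_*\CO_X(3K_X)$, which on a general fibre reduces to showing $h^0(F,K_F+\roundup{\tfrac{2(e-1)}{e}D|_F})=6$. The distinguished divisor $D|_F\in|K_F|$ is singled out by the splitting $f_*\omega_X=\CO_B\oplus\CO_B(L)$, and I expect to prove that for a general fibre $D|_F$ is a reduced irreducible member of the pencil $|K_F|$ (equivalently, the section $B\to\PP(f_*\omega_X)$ determined by the splitting avoids the finite bad-member locus of the pencil bundle). Under reducedness one has $\roundup{\tfrac{2(e-1)}{e}D|_F}=2D|_F\sim 2K_F$, whence $h^0(F,3K_F)=6$. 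This reducedness verification is where I expect the main obstacle to lie.

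With $\rank\CF_3=6$, decompose $f_*\CO_X(3K_X)=\bigoplus_i\CV_i$ into indecomposable summands. For each $i$ the composition $\CF_3\hookrightarrow f_*\CO_X(3K_X)\twoheadrightarrow\CV_i$ must be non-zero, otherwise $\CF_3$ lies in the strictly smaller-rank bundle $\bigoplus_{j\ne i}\CV_j$, contradicting full rank. Lemma~\ref{lem: vector bundle hom} then forces $\mu(\CV_i)\ge\mu_{\min}(\CF_3)>2$, so $\mu_{\min}(f_*\CO_X(3K_X))>2$. Finally, Lemma~\ref{lem: nefness} with $L=3K_X$ gives $(3K_X-2F)\cdot C\ge 0$ for every integral curve $C$ outside the $f$-horizontal part of $\Delta_3$. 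From the Horikawa description of a $(1,2)$-surface as a hypersurface in $\PP(1,1,2,5)$, the base locus of $|3K_F|$ consists of the single base point of $|K_F|$, so the horizontal part of $\Delta_3$ is exactly the section $\Gamma$. Proposition~\ref{prop: HZ1}(2) gives $(K_X\cdot\Gamma)\ge d/3\ge(p_g(X)-1)/3\ge 1$, hence $(3K_X-2F)\cdot\Gamma\ge 3\cdot 1-2=1>0$; for $f$-vertical $C$, nefness of $K_X$ suffices.
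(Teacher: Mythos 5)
Your nefness half is fine and matches the paper (Lemma~\ref{lem: nefness} off the horizontal part of $\Delta_3$, the identity $\baselocus|3K_F|=\baselocus|K_F|$ identifying that part with $\Gamma$, and Proposition~\ref{prop: HZ1} to handle $\Gamma$), and the projection-to-summands argument via Lemma~\ref{lem: vector bundle hom} would work \emph{if} you had a full-rank subsheaf of slope $>2$. But the rank-six claim for $\CF_3$ is a genuine gap, and reducedness of $D|_F$ does not close it. The bundle is computed on $F'$, not $F$: writing $(\pi|_{F'})^*(D|_F)=\widetilde{D}+\sum m_iE_i$ and $K_{F'}=(\pi|_{F'})^*K_F+\sum a_iE_i$, the coefficient of $E_i$ in $K_{F'}+\roundup{tD'|_{F'}}-(\pi|_{F'})^*(K_F+2D|_F)$ with $t=2-\frac{2}{e}$ equals $a_i-\rounddown{\frac{2}{e}m_i}$, so you need a klt-type condition on $\bigl(F,\frac{2}{e}D|_F\bigr)$. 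Proposition~\ref{prop: lct} only gives $\mathrm{lct}\ge\frac{1}{10}$, which requires $e\ge 21$; but this lemma lives in the Gorenstein special case, where only $e\ge 3$ is assumed, and it is consumed (through Proposition~\ref{prop: 4K-3F nef} and Proposition~\ref{prop: HZ2}) precisely to prove the Gorenstein statement for all $p_g(X)\ge 4$. For $e=3$ a point of $D|_F$ of multiplicity $3$ (possible for reducible members such as $C_0+2E$ with $E$ a $(-2)$-curve) already gives $a_1=1<2=\rounddown{\frac{2}{3}\cdot 3}$. Moreover your proposed reduction — that the section $\CO_B\hookrightarrow f_*\omega_X$ avoids non-reduced or badly singular members of the canonical pencils — has no support from the setup: nothing constrains $D$, and members of $|K_F|$ can contain multiple $(-2)$-curve components. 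A structural warning sign: your construction, if valid, would yield $\mu_{\min}(f_*\CO_X(3K_X))>2$ unconditionally, strictly stronger than the statement, whereas case (1) of Lemma~\ref{lem: 2K-F nef} naturally allows equality.

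The paper avoids any rank computation at the $3K$ level. It uses the surjectivity of the multiplication map $H^0(F,K_F)\otimes H^0(F,2K_F)\to H^0(F,3K_F)$ to get a generically surjective morphism $f_*\omega_X\otimes f_*\CO_X(2K_X)\to f_*\CO_X(3K_X)$, then runs the dichotomy of Lemma~\ref{lem: 2K-F nef}: in case (1), Lemmas~\ref{lem: vector bundle tensor} and~\ref{lem: vector bundle surjection} give $\mu_{\min}\ge 2$ at once; in case (2), Lemma~\ref{lem: vector bundle hom} leaves at worst one residual line-bundle summand of degree one, which is excluded because $2K_X-F$ is nef and big (an intersection computation via Proposition~\ref{prop: HZ1}), so $f_*\CO_X(3K_X-F)=f'_*\CO_{X'}(K_{X'}+\pi^*(2K_X-F))$ is ample by Corollary~\ref{coro: vector bundle ample}, forcing $\mu_{\min}(f_*\CO_X(3K_X))>1$. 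To salvage your route you would need a level-six analogue of Proposition~\ref{prop: 3/10}, namely $h^0\bigl(F',K_{F'}+\roundup{tD'|_{F'}}\bigr)\ge 6$ for all $t>t_0$ with $t_0\le\frac{4}{3}$; no such result is available in the paper, and it is exactly what the inductive multiplication-map argument is designed to circumvent.
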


\begin{proof}
	Given that $\mu_{\min}(f_* \CO_X(3K_X)) \ge 2$, consider the evaluation morphism
	$$
	\mathrm{ev}_3: f^*f_*\CO_X(3K_X) \to \CO_X(3K_X),
	$$
	and let $\Delta_3 = \mathrm{Sing}(X) \cup \mathrm{Supp}(\CO_X(3K_X)/\mathrm{Im}(\ev_3))$. Since $K_X$ is nef, by Lemma \ref{lem: nefness}, for any curve $C$ not contained in the $f$-horizontal part of $\Delta_3$, we have
	$$
	\left((3K_X-2F) \cdot C \right) \ge 0.
	$$
	On the other hand, since $\baselocus|3K_F| = \baselocus|K_F|$, we see that the $f$-horizontal part of $\Delta_3$ is just the section $\Gamma$. By Proposition \ref{prop: HZ1} (1) and (2), we have
	$$
	\left((3K_X-2F) \cdot \Gamma \right) = 3(K_X \cdot \Gamma) - 2 \ge d - 2 > 0.
	$$
	As a result, $3K_X - 2F$ is nef.
	
	To prove that $\mu_{\min}(f_* \CO_X(3K_X)) \ge 2$, we may write
	$$
	f_* \CO_X(3K_X) = \CV_2 \oplus \CV_3,
	$$
	where $\mu_{\min}(\CV_2) \ge 2$  and  $\mu_{\max}(\CV_3)<2$. Since
	$
	H^0(F, K_F) \otimes H^0(F, 2K_F) \to H^0(F, 3K_F)
	$
	is surjective, we obtain a generically surjective morphism
	$$
	f_* \omega_X \otimes f_*\CO_X(2K_X) \to f_* \CO_X(3K_X).
	$$
	Recall that 
	$
	f_* \omega_X = \CO_B \oplus \CL
	$ 
	with $\deg \CL \ge 3$. If $\mu_{\min}(f_*\CO_X(2K_X)) \ge 2$, then $\mu_{\min}(f_* \CO_X(3K_X)) \ge 2$ by Lemma \ref{lem: vector bundle surjection}. By Lemma \ref{lem: 2K-F nef}, the proof reduces to the case when
	$$
	f_* \CO_X(2K_X) = \CV_1 \oplus \CL_1,
	$$
	where $\CV_1$ is of rank three with $\mu_{\min}(\CV_1) > 2$ and $\CL_1$ is a line bundle with $\deg \CL_1 = 1$. In this case, the above generic surjection becomes
	$$
	\CV_1 \oplus (\CV_1 \otimes \CL) \oplus \CL_1 \oplus (\CL_1 \otimes \CL) \to \CV_2 \oplus \CV_3.
	$$
	By Lemma \ref{lem: vector bundle hom}, $\Hom(\CV_1, \CV_3) = \Hom(\CL_1 \otimes \CL, \CV_3) = \Hom(\CV_1\otimes\CL, \CV_3) = 0$. By the generic surjectivity, either $\CV_3= 0$ or $\CV_3$ is a line bundle with $\deg \CV_3 = 1$. However, the latter cannot occur. Indeed, by Lemma \ref{lem: 2K-F nef}, $2K_X - F$ is nef. By Proposition \ref{prop: HZ1} (1) and (3), we have
	$$
	(2K_X - F)^3 = 8K_X^3 - 6K_F^2 = 8K_X^3 - 6 > 0.
	$$
	Thus $2K_X - F$ is also big. By Corollary \ref{coro: vector bundle ample}, $f_*\CO_X(3K_X - F) = f'_*\CO_{X'}(K_{X'} + \pi^*(2K_X - F))$ is ample. In particular, $\mu_{\min} (f_* \CO_X(3K_X)) > 1$. Thus the proof is completed.
\end{proof}

\begin{prop} \label{prop: 4K-3F nef}
	We have 
	$
	\mu_{\min}(f_* \CO_X(4K_X)) \ge 3.
	$
	Moreover, $4K_X - 3F$ is nef.
\end{prop}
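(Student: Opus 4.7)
The plan is to first establish the slope bound $\mu_{\min}(f_*\CO_X(4K_X)) \ge 3$ and then deduce the nefness of $4K_X - 3F$ from Lemma \ref{lem: nefness}. The reduction is clean: since $|2K_F|$ is base point free on the general $(1,2)$-surface fibre $F$, so is $|4K_F| = |2K_F + 2K_F|$, so the $f$-horizontal part of $\Delta_4$ is empty; Lemma \ref{lem: nefness} with $L = 4K_X$ then gives $(4K_X - \mu_l F) \cdot C \ge 0$ for every integral curve $C$, whence $4K_X - 3F$ is nef whenever $\mu_l = \mu_{\min}(f_*\CO_X(4K_X)) \ge 3$.

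To prove the slope bound, I would split according to the dichotomy of Lemma \ref{lem: 2K-F nef}. In case (1), $K_X - F$ is nef, and Proposition \ref{prop: HZ1}(3) combined with $d \ge p_g(X) - 1 \ge 3$ gives $K_X^3 \ge 4d/3 \ge 4$, so $(K_X - F)^3 = K_X^3 - 3 \ge 1 > 0$. Thus $3(K_X - F) = 3K_X - 3F$ is nef and big, and Corollary \ref{coro: vector bundle ample} applied to $f_*\CO_X(4K_X - 3F) = f'_*\CO_{X'}(K_{X'} + \pi^*(3K_X - 3F))$ gives ampleness, hence $\mu_{\min}(f_*\CO_X(4K_X)) > 3$.

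Case (2), where $f_*\CO_X(2K_X) = \CV_1 \oplus \CL_1$ with $\rank \CV_1 = 3$, $\mu_{\min}(\CV_1) > 2$, and $\deg \CL_1 = 1$, is the main obstacle. The key tool is the multiplication map $f_*\CO_X(2K_X) \otimes f_*\CO_X(2K_X) \to f_*\CO_X(4K_X)$, whose generic surjectivity reduces to the surjectivity of $H^0(F, 2K_F)^{\otimes 2} \to H^0(F, 4K_F)$ on a general fibre. This can be verified using that the bicanonical map of a $(1,2)$-surface is a finite degree-$2$ morphism $\phi$ onto a quadric $Q \subset \PP^3$: dimension counts identify $H^0(F, 2K_F) = \phi^* H^0(Q, \CO_Q(1))$ and $H^0(F, 4K_F) = \phi^* H^0(Q, \CO_Q(2))$, and the projective normality of $Q$ ensures that $\Sym^2 H^0(Q, \CO_Q(1)) \to H^0(Q, \CO_Q(2))$ is surjective. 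Writing $f_*\CO_X(4K_X) = \CV_4 \oplus \CV_5$ with $\mu_{\min}(\CV_4) \ge 3$ and $\mu_{\max}(\CV_5) < 3$, and decomposing the source as
\[
\CV_1^{\otimes 2} \oplus (\CV_1 \otimes \CL_1) \oplus (\CL_1 \otimes \CV_1) \oplus \CL_1^{\otimes 2},
\]
Lemma \ref{lem: vector bundle tensor} gives minimum slopes $>4$, $>3$, $>3$ for the first three summands, so Lemma \ref{lem: vector bundle hom} forces their images in $\CV_5$ to vanish. Hence the image of the multiplication map in $\CV_5$ is a subsheaf of the rank-$1$ image of $\CL_1^{\otimes 2}$. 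Since the cokernel of the multiplication map is torsion, $\rank \CV_5 \le 1$. If $\CV_5$ were a nonzero line bundle, combining $\mu_{\max}(\CV_5) < 3$ with $\mu_{\min}(f_*\CO_X(4K_X)) > 2$ (obtained from the ampleness of $f_*\CO_X(4K_X - 2F) = f'_*\CO_{X'}(K_{X'} + \pi^*(3K_X - 2F))$, using that $3K_X - 2F$ is nef by Lemma \ref{lem: 3K-2F nef} and big since $(3K_X - 2F)^3 = 27 K_X^3 - 54 \ge 54$) would force $2 < \deg \CV_5 < 3$, impossible for an integer. Therefore $\CV_5 = 0$. The hardest part of the argument is precisely this integer-trapping rigidity, where the low-slope direct summand $\CL_1$ of $f_*\CO_X(2K_X)$ threatens to produce a slope-$2$ summand of $f_*\CO_X(4K_X)$ that only the auxiliary ampleness coming from the bigness of $3K_X - 2F$ succeeds in eliminating.
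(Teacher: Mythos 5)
Your proof is correct and its core architecture coincides with the paper's: reduce nefness of $4K_X - 3F$ to the slope bound via Lemma \ref{lem: nefness} and base-point-freeness of $|4K_F|$, then prove the slope bound through the dichotomy of Lemma \ref{lem: 2K-F nef}, eliminating the potential low-slope line-bundle summand $\CV_5$ of $f_*\CO_X(4K_X)$ by slope-forced Hom-vanishing together with the ampleness of $f_*\CO_X(4K_X - 2F) = f'_*\CO_{X'}(K_{X'} + \pi^*(3K_X - 2F))$, which comes from the nefness and bigness of $3K_X - 2F$. Two local deviations are worth recording. First, in the case $\mu_{\min}(f_*\CO_X(2K_X)) \ge 2$ the paper never uses nefness of $K_X - F$: it simply feeds $\mu_{\min}(\Sym^2 f_*\CO_X(2K_X)) \ge 4$ through the generically surjective multiplication map and Lemma \ref{lem: vector bundle surjection}. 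Your alternative — bigness of $K_X - F$ (via $(K_X - F)^3 = K_X^3 - 3 > 0$ from Proposition \ref{prop: HZ1}, exactly the computation in Lemma \ref{lem: K-F nef}) plus Corollary \ref{coro: vector bundle ample} applied to $f_*\CO_X(4K_X - 3F) = f'_*\CO_{X'}(K_{X'} + \pi^*(3K_X - 3F))$ — is equally valid and gives $\mu_{\min} > 3$ directly, at the small extra cost of needing bigness. Second, the paper merely asserts the fibrewise surjectivity (and in fact misprints the target as $H^0(F, 2K_F)$ instead of $H^0(F, 4K_F)$), whereas you verify it through the degree-two bicanonical morphism of a $(1,2)$-surface onto the quadric cone $Q \subset \PP^3$; your dimension count ($h^0(F, 2K_F) = 4$ and $h^0(F, 4K_F) = 9 = h^0(Q, \CO_Q(2))$, with $\Sym^2 H^0(Q, \CO_Q(1)) \to H^0(Q, \CO_Q(2))$ surjective since $H^1(\PP^3, \CO_{\PP^3}) = 0$) is correct, so you have supplied a detail the paper leaves implicit. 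Using the full tensor square in place of $\Sym^2$, and trapping $2 < \deg \CV_5 < 3$ rather than pinning $\deg \CV_5 = 2$ and contradicting $\mu_{\min} > 2$, are immaterial variants of the paper's argument.
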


\begin{proof}
	Given that $\mu_{\min}(f_* \CO_X(4K_X)) \ge 3$, consider the evaluation map
	$$
	\mathrm{ev}_4: f^*f_* \CO_X(4K_X) \to \CO_X(4K_X),
	$$
	and let $\Delta_4 = \mathrm{Sing}(X) \cup \mathrm{Supp}(\CO_X(4K_X)/\mathrm{Im}(\ev_4))$. 
	Since $|4K_F|$ is base point free, we deduce that $\Delta_4$ is $f$-vertical. By Lemma \ref{lem: nefness}, $4K_X - 3F$ is nef.
	
	To prove that $\mu_{\min}(f_* \CO_X(4K_X)) \ge 3$, we may write
	$$
	f_* \CO_X(4K_X) = \CV_4 \oplus \CV_5,
	$$
	where $\mu_{\min}(\CV_4) \ge 3$ and $\mu_{\max}(\CV_5) < 3$. Since
	$
	\Sym^2 H^0(F, 2K_F) \to H^0(F, 2K_F)
	$
	is surjective, we obtain a generically surjective morphism
	$$
	\Sym^2 f_*\CO_X(2K_X) \to f_* \CO_X(4K_X).
	$$
	Similar to the proof of Lemma \ref{lem: 3K-2F nef}, the proof reduces again to the case when
	$$
	f_* \CO_X(2K_X) = \CV_1 \oplus \CL_1,
	$$
	where $\CV_1$ is of rank three with $\mu_{\min}(\CV_1) > 2$ and $\CL_1$ is a line bundle with $\deg \CL_1 = 1$. Now the above generic surjection becomes
	$$
	\Sym^2 \CV_1 \oplus (\CV_1 \otimes \CL_1) \oplus \CL_1^{\otimes 2} \to \CV_4 \oplus \CV_5.
	$$
	By Lemma \ref{lem: vector bundle tensor} and \ref{lem: vector bundle hom}, $\Hom(\Sym^2 \CV_1, \CV_5) = \Hom(\CV_1 \otimes \CL_1, \CV_5) = 0$. Thus the generic surjectivity implies that either $\CV_5= 0$ or $\CV_5$ is a line bundle with $\deg \CV_5 = 2$. On the other hand, by Lemma \ref{lem: 3K-2F nef}, $3K_X - 2F$ is nef. By Proposition \ref{prop: HZ1} (1) and (3), we have
	$$
	(3K_X - 2F)^3 = 27K_X^3 - 54K_F^2 = 27K_X^3 - 54 > 0.
	$$
	Thus $3K_X - 2F$ is also big. By Corollary \ref{coro: vector bundle ample},  $f_*\CO_X(4K_X - 2F) = f'_* \CO_{X'}(K_{X'} + \pi^*(3K_X - 2F))$ is ample. In particular, $\mu_{\min} (f_* \CO_X(4K)) > 2$. We conclude that $\CV_5 = 0$, and the proof is completed.
\end{proof}

\subsubsection{General case} Now we consider the general case, without assuming $X$ is Gorenstein. We first recall a result due to Koll\'ar.

\begin{prop} \cite[Theorem A.1]{Chen_Chen_Jiang} \label{prop: lct}
	Let $\Delta \ge 0$ be a $\QQ$-divisor on $F$ such that $\Delta \sim_{\QQ} K_F$. Then
	$$
	\mathrm{lct}(F; \Delta) \ge \frac{1}{10}.
	$$
\end{prop}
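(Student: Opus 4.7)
The plan is to argue by contradiction along the lines of Koll\'ar's standard approach to log canonical thresholds. Suppose $\mathrm{lct}(F;\Delta) < \frac{1}{10}$, so there exists a rational number $c<\frac{1}{10}$ for which $(F, c\Delta)$ fails to be log canonical at some point $x_0 \in F$. The equivalence $\Delta \sim_{\QQ} K_F$ provides the basic numerical input $\Delta^2 = 1$ and $(\Delta \cdot C) = (K_F \cdot C)$ for any curve $C \subset F$, which serves as a volume-type constraint throughout.

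First I would extract a minimal non-klt center $Z \ni x_0$: after a standard tie-break perturbation of $c\Delta$ by a small multiple of a general member of the base point free system $|2K_F|$, one may assume $Z$ is the unique minimal lc center through $x_0$. Next, Nadel/Kawamata--Viehweg vanishing applies: because $K_F - c\Delta \sim_{\QQ} (1-c)K_F$ is nef and big with $((1-c)K_F)^2 \ge 81/100$, one obtains $H^1(F, \omega_F \otimes \mathcal{J}(F, c\Delta)) = 0$, and more generally the vanishing with any pluricanonical twist. Combined with the multiplier-ideal short exact sequence, this yields a surjection from $H^0(F, mK_F)$ onto sections on $Z$ with prescribed behaviour, for small $m$.

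The argument then splits according to $\dim Z$. When $Z$ is a curve, the above surjection together with $p_g(F)=2$ forces $Z$ to be a component of a member of $|K_F|$ (or of $|2K_F|$ after adjustment), after which the adjunction inequality combined with the Hodge index theorem on the $(1,2)$-surface --- using $K_F^2 = 1$ and $(K_F\cdot Z) = (\Delta\cdot Z)$ --- yields the desired contradiction with $c<\frac{1}{10}$. When $Z = \{x_0\}$ is an isolated center, the non-lc condition forces $\mathrm{mult}_{x_0}(\Delta) > 10$; blowing up at $x_0$ and intersecting with the exceptional divisor against $\Delta^2 = 1$ gives a numerical contradiction, except possibly when $x_0$ is the unique base point of the pencil $|K_F|$, where the local structure of the pencil (the generic member passing through $x_0$ with controlled multiplicity) provides the additional input needed to close the case.

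The main obstacle I anticipate is the sharpness of the constant $\frac{1}{10}$ in the isolated-point case: this number is not accidental but encodes the precise geometry of $(1,2)$-surfaces, in particular the relationship between the multiplicity of the canonical base point and the invariants $K_F^2 = 1$, $p_g(F) = 2$. Tuning the case analysis and the choice of tie-break so as to land exactly on $\frac{1}{10}$ rather than a weaker bound is the delicate step, and is where Koll\'ar's argument in the appendix to \cite{Chen_Chen_Jiang} presumably concentrates its technical content; my plan is to follow that strategy.
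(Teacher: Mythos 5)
You should first note that the paper contains no proof of this proposition at all: it is imported verbatim, with citation, from Koll\'ar's appendix to Chen--Chen--Jiang (Theorem A.1 there), and the surrounding text explicitly attributes it to Koll\'ar. So a blind proof attempt has to actually reprove that theorem, and your proposal does not: it ends by conceding that the delicate step is ``where Koll\'ar's argument \ldots presumably concentrates its technical content; my plan is to follow that strategy.'' A plan to follow the cited proof is a citation, not a proof, so on these grounds alone the attempt has a genuine gap. More importantly, the one case where the constant $\frac{1}{10}$ must actually be produced --- the curve-center case, i.e.\ bounding the coefficient $a$ of a curve $Z$ in $\Delta$ --- is pure assertion in your sketch. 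Since $(\Delta \cdot K_F) = K_F^2 = 1$, any component $Z$ with $(K_F \cdot Z) \ge 1$ automatically has coefficient $\le 1$; the whole difficulty is concentrated on curves with $(K_F \cdot Z) = 0$, namely $(-2)$-curves, for which ``adjunction plus Hodge index on canonical members'' says nothing, and where one must control how large a multiple of a $(-2)$-configuration can be split off an effective $\QQ$-divisor in the class $K_F$ on a $(1,2)$-surface. That is where $10$ comes from, and your sketch never engages with it.

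There are also two concrete technical errors. First, the isolated-center step is a non sequitur: from $\mathrm{mult}_{x_0}\Delta > 10$ and $\Delta^2 = 1$ no contradiction follows by blowing up, since writing $\pi^*\Delta = \widetilde{\Delta} + mE$ only gives $\widetilde{\Delta}^2 = 1 - m^2$, which is perfectly consistent for a non-nef effective divisor ($\Delta^2 \ge (\mathrm{mult}_{x_0}\Delta)^2$ requires positivity you do not have). The correct and much simpler argument is to intersect $\Delta$ with a general member $C$ of the base-point-free system $|2K_F|$ through $x_0$, which is not a component of $\supp\Delta$ and yields $\mathrm{mult}_{x_0}\Delta \le (\Delta \cdot C) = (K_F \cdot 2K_F) = 2 < 10$; so the point case is in fact easy, but not for the reason you give. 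Second, the displayed vanishing $H^1(F, \omega_F \otimes \mathcal{J}(F, c\Delta)) = 0$ is not what your hypothesis delivers: Nadel vanishing with ``$K_F - c\Delta \sim_{\QQ} (1-c)K_F$ nef and big'' gives $H^1\bigl(F, \CO_F(2K_F) \otimes \mathcal{J}(F, c\Delta)\bigr) = 0$, i.e.\ a pluricanonical twist is mandatory, and the untwisted statement you wrote has no justification. In sum: the strategy (tie-breaking, minimal lc centers, Nadel vanishing, case division by $\dim Z$) is a reasonable template, but the crucial coefficient bound that encodes $\frac{1}{10}$ is missing, one of the two case analyses rests on a false numerical inequality, and the proposal ultimately defers to the very reference the paper cites.
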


\begin{lemma} \label{lem: K-F nef}
	If $e \ge 11$, then $K_X - F$ is nef and big.
\end{lemma}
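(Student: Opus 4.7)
The plan is to treat bigness and nefness of $K_X - F$ separately, using the description $K_X \sim f^*L + D$ established at the start of this subsection, with $D \ge 0$ effective and $e = \deg L \ge 11$.

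For bigness, I would write down the numerical identity
\begin{equation*}
    K_X - F \;\equiv\; \frac{e-1}{e}\,K_X + \frac{1}{e}\,D
\end{equation*}
(which follows from $K_X \equiv eF + D$). The right-hand side is a sum of a nef and big $\QQ$-divisor (as $K_X$ is nef and big, $X$ being minimal of general type) and an effective $\QQ$-divisor, so $K_X - F$ is big.

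For nefness, I would verify $(K_X - F) \cdot C \ge 0$ for every irreducible curve $C \subset X$. When $C$ is $f$-vertical, $F \cdot C = 0$ and $(K_X - F) \cdot C = K_X \cdot C \ge 0$. When $C$ is $f$-horizontal but $C \not\subseteq \supp(D)$, then using $K_X - F \sim (e-1)F + D$ both summands are nonnegative against $C$ (the divisor $F$ is nef because $|L|$ is base point free on the elliptic curve $B$), and we are done. The only remaining case, where $C \subseteq \supp(D)$ is $f$-horizontal, is the crux, and there I would apply Proposition \ref{prop: lct} to $D|_F \sim_\QQ K_F$ to obtain $\mathrm{lct}(F; D|_F) \ge 1/10$, which forces every prime divisor on $F$ to appear in $D|_F$ with multiplicity at most $10$. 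Intersecting with a general $F$ then shows that every $f$-horizontal prime component $D_0$ of $D$ has multiplicity $a \le 10$ in $D$. Writing $D = a D_0 + D_1$ with $D_1 \ge 0$ not containing $D_0$, combining adjunction along $D_0$ (which gives $D_0 \cdot C = (K_{D_0} - K_X|_{D_0}) \cdot C$) with $K_X \cdot C = e(F \cdot C) + D \cdot C$ yields
\begin{equation*}
    (1+a)(K_X - F) \cdot C \;=\; (e-1-a)(F \cdot C) + a\,K_{D_0} \cdot C + D_1 \cdot C.
\end{equation*}
Since $e \ge 11 > a$, the first summand is nonnegative; the term $a\,K_{D_0}\cdot C$ is controlled by the positivity of $K_F$ along the fibres of $D_0 \to B$ (these are curves on the $(1,2)$-surface $F$); and $D_1 \cdot C$ is handled by induction on the number of horizontal components of $D$ containing $C$.

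The main obstacle is this final case: the threshold $e \ge 11$ is dictated precisely by the inverse $10$ of the lct bound $1/10$, so the multiplicity estimate $a \le 10$ just barely compensates for the loss of $F \cdot C$. The delicate bookkeeping lies in dealing with possibly singular horizontal components of $D$ (requiring normalization before adjunction) and ensuring that $K_{D_0} \cdot C$ remains nonnegative after this normalization.
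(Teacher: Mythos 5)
Your bigness argument is correct and in fact more direct than the paper's (which deduces bigness from $(K_X - F)^3 = K_X^3 - 3K_F^2 = K_X^3 - 3 > 0$ only after nefness is known, via Proposition \ref{prop: HZ1}); the decomposition $K_X - F \equiv \frac{e-1}{e}K_X + \frac{1}{e}D$ is valid, as are the vertical case and the case $C \not\subseteq \supp(D)$. But the crux case $C \subseteq \supp(D)$ contains a genuine gap: the term $a\,K_{D_0}\cdot C$ in your identity is not controlled by anything you invoke. ``Positivity of $K_F$ along the fibres of $D_0 \to B$'' constrains $K_{D_0}$ only in the fibre direction, while $C$ is horizontal. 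Concretely, nothing prevents $D_0$ from being a ruled surface over the elliptic curve $B$ (all its rational curves are then vertical, consistent with $C$ being horizontal of genus $\ge 1$); on such a surface a horizontal section $C$ with $C^2 = k > 0$ has $K_{D_0}\cdot C = -k$ by adjunction, arbitrarily negative, while $F \cdot C = 1$. At the boundary of your numerics ($e = 11$, $a = 10$) the coefficient $e - 1 - a$ vanishes, so the identity gives no sign information at all. The $D_1 \cdot C$ term is equally unresolved: when $C$ lies on several horizontal components, the proposed induction is circular, since the adjunction identity written for a second component reintroduces $D_0 \cdot C$ with no gain. (A secondary issue: $X$ is only terminal here, not Gorenstein, so $D_0$ is merely a Weil divisor and adjunction along it carries a different term; you flag the normalization problem but it compounds rather than resolves the sign question.)

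The paper's proof runs on an entirely different engine, and the lct bound plays a different role than the one you assign it. It shows by induction that $nK_X - (n-1)F$ is nef for every $n$ and concludes by closedness of the nef cone. At each step, the nef and big $\QQ$-divisor $m\pi^*(K_X - F) - \frac{m}{m(e-1)+1}D'$ produces, via Corollary \ref{coro: vector bundle ample} (Kawamata--Viehweg vanishing plus the ampleness criterion on an elliptic base), an ample sheaf $\CE_m$ on $B$. The hypothesis $e \ge 11$ is used to guarantee $\frac{m}{m(e-1)+1} < \frac{1}{10}$ for \emph{all} $m$ simultaneously, so that Proposition \ref{prop: lct} makes the pair $\left(F, \frac{m}{m(e-1)+1}D|_F\right)$ klt --- not to bound multiplicities of $D$, but to compute $\rank \CE_m = h^0(F, (m+1)K_F)$, whence $\CE_m$ injects into $f_*\CO_X((m+1)K_X - mF)$ with full rank and the latter is ample. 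Nefness of $(m+1)K_X - mF$ then follows from Lemma \ref{lem: nefness} (Miyaoka's semipositivity), the only horizontal obstruction being the section $\Gamma$ when $m = 2$ (since $\baselocus|3K_F| = \baselocus|K_F|$), which is disposed of by $3(K_X \cdot \Gamma) - 2 \ge d - 2 > 0$ from Proposition \ref{prop: HZ1}. This global input from positivity of direct images is precisely what your fibrewise intersection calculus lacks, and without a substitute for it the case $C \subseteq \supp(D)$ cannot be closed along your lines.
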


\begin{proof}
	Since the nef cone of $X$ is closed, it is enough to prove that $nK_X - (n-1)F$ is nef for any $n \ge 1$. As long as $K_X - F$ is nef, we deduce that 
	$$
	(K_X - F)^3 = K_X^3 - 3K_F^2 = K_X^3 - 3 > 0
	$$
	by Proposition \ref{prop: HZ1} (1) and (3). Thus $K_X - F$ is big.
	
	We prove by induction on $n$. The case $n=1$ is clear. Suppose that $mK_X - (m-1)F$ is nef. Note that the $\QQ$-divisor
	$$
	m\pi^*(K_X - F) -  \frac{m}{m(e-1) + 1} D' \equiv \left(1 - \frac{1}{m(e-1) + 1}\right) \left(m\pi^*K_X - (m-1)F'\right)
	$$
	is nef and big with a simple normal crossing fractional part. Let 
	$$
	\CE_m = f'_* \CO_{X'}\left(K_{X'} + \roundup{m\pi^*(K_X - F) - \frac{m}{m(e-1) + 1} D'} \right).
	$$
	By Corollary \ref{coro: vector bundle ample}, $\CE_m$ is ample. On the other hand, since $e \ge 11$, for any $m > 0$, we have
	$$
	\frac{m}{m(e-1) + 1} < \frac{1}{10}.
	$$ 
	By Proposition \ref{prop: lct}, the pair $(F, \frac{m}{m(e-1) + 1}D|_F)$ is klt. In particular,
	$$
	(\pi|_{F'})_* \CO_{F'} \left(K_{F'/F} - \rounddown{\frac{m}{m(e-1) + 1} D'|_{F'}} \right) = \CO_F.
	$$
	It follows that
	\begin{align*}
		\rank \CE_m & = h^0 \left. \left(F', K_{F'} + \roundup{m\pi^*(K_X - F)- \frac{m}{m(e-1) + 1} D'}  \right|_{F'} \right) \\
		& = h^0 \left(F', (m+1) (\pi|_{F'})^*K_F + K_{F'/F} - \rounddown{\frac{m}{m(e-1) + 1} D'|_{F'}}\right) \\
		& = h^0\left(F, (m+1)K_F\right).
	\end{align*}
	Since there is an injection $\CE_m \hookrightarrow f_* \CO_X\left((m+1)K_X - mF\right)$ of vector bundles of the same rank, we deduce that $f_* \CO_X\left((m+1)K_X - mF\right)$ is ample. Let 
	$$
	\ev_{m+1}: f^*f_* \CO_X \left((m+1)K_X - mF\right) \to \CO_X \left((m+1)K_X - mF\right)
	$$
	be the evaluation morphism, and let 
	$$
	\Delta_{m+1} = \mathrm{Sing}(X) \cup \mathrm{Supp}(\CO_X((m+1)K_X - mF)/\mathrm{Im}(\ev_{m+1})).
	$$ 
	By Lemma \ref{lem: nefness}, $(m+1)K_X - mF$ is nef away from the $f$-horizontal part of $\Delta_{m+1}$. Note that $F$ is a $(1, 2)$-surface. If the $f$-horizontal part of $\Delta_{m+1}$ is non-empty, then $m = 2$ and the $f$-horizontal part of $\Delta_{m+1}$ is the section $\Gamma$. Nevertheless, by Proposition \ref{prop: HZ1} (1) and (2), we have
	$$
	\left((3K_X - 2F) \cdot \Gamma \right) = 3(K_X \cdot \Gamma) - 2 \ge d - 2 > 0.
	$$
	Thus the induction process works, and the result follows. 
\end{proof}

\begin{lemma} \label{lem: more nefness}
	Suppose that $e \ge 11$ and that $K_X - aF$ is nef and big for some rational number $a \ge 1$. Then $K_X - \frac{9a + e}{20}F$ is nef and big.
\end{lemma}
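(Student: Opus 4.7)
The plan is to mimic the $\QQ$-divisor construction from the proof of Lemma \ref{lem: K-F nef}, now using the hypothesis $K_X - aF$ nef and big as the ``input'' positivity in place of $K_X$. Fix rational numbers $c$ and $\lambda$ with
$$c < \frac{9a+e}{10}, \qquad \frac{c-a}{e-a} \le \lambda < \frac{1}{10}$$
(this is possible precisely because $c < (9a+e)/10$ is equivalent to $(c-a)/(e-a) < 1/10$), and consider the $\QQ$-divisor
$$L_c := \pi^*(K_X - cF) - \lambda D'$$
on $X'$. Using $\pi^*K_X \sim eF' + D'$, one checks that
$$L_c = (1-\lambda)\pi^*\Bigl(K_X - \frac{c - \lambda e}{1-\lambda}F\Bigr),$$
and the lower bound on $\lambda$ forces $\frac{c-\lambda e}{1-\lambda} \le a$, so $L_c$ is a positive multiple of the pullback of a nef and big divisor dominating $K_X - aF$; in particular, $L_c$ is nef and big with simple normal crossing fractional support.

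By Corollary \ref{coro: vector bundle ample}, the sheaf $\CF := f'_*\CO_{X'}(K_{X'} + \roundup{L_c})$ is ample on $B$. Restricting $K_{X'} + \roundup{L_c}$ to a general fiber $F'$ of $f'$ and applying adjunction, the rank of $\CF$ reduces to $h^0(F, 2K_F)$ provided
$$(\pi|_{F'})_* \CO_{F'}\bigl(K_{F'/F} - \rounddown{\lambda D'|_{F'}}\bigr) = \CO_F;$$
this holds because $\lambda < 1/10$ and $D|_F \sim_\QQ K_F$ imply that $(F, \lambda D|_F)$ is klt by Proposition \ref{prop: lct}. Thus $\rank \CF = h^0(F, 2K_F) = 4$. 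Since $K_{X'} + \roundup{L_c} \le \pi^*(2K_X - cF) + E_\pi$, there is a natural inclusion $\CF \hookrightarrow f_*\CO_X(2K_X - cF)$ of sheaves of equal rank $4$, forcing $f_*\CO_X(2K_X - cF)$ to be ample.

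Now apply Lemma \ref{lem: nefness} to $L := 2K_X - cF$. Since $|2K_F|$ is base-point-free on the $(1,2)$-surface $F$, the horizontal part of the degeneracy locus of the evaluation map $f^* f_*\CO_X(L) \to \CO_X(L)$ is empty, and the ampleness of $f_*\CO_X(L)$ yields $\mu_{\min} > 0$; hence $2K_X - cF$ is nef. Letting $c \to (9a+e)/10$ through admissible rationals and invoking closedness of the nef cone, we conclude that $2(K_X - bF)$, and therefore $K_X - bF$, is nef.

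For bigness, compute $(K_X - bF)^3 = K_X^3 - 3b$ (using $F^2 \equiv 0$). If $b \le a$, this is immediate from $K_X^3 > 3a$; if $b > a$, equivalently $a < e/11$, then $b < e/11$ and Proposition \ref{prop: HZ1}(3) combined with $d \ge e$ gives $K_X^3 \ge 4d/3 \ge 4e/3 > 3b$. The main subtlety is the tight coupling between the two parameters: the choice of a \emph{single} pullback $\pi^*(K_X - cF)$ in $L_c$ (rather than $m$ copies for $m > 1$) is precisely what allows $c/2$ to reach $b = (9a+e)/20$, while the klt threshold $\lambda < 1/10$ from Proposition \ref{prop: lct} caps the attainable $c$ at $(9a+e)/10$.
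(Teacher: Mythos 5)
Your construction of $L_c$, the klt computation of $\rank \CF$ via Proposition \ref{prop: lct}, and the ampleness step via Corollary \ref{coro: vector bundle ample} all correctly mirror the paper's toolkit from Lemma \ref{lem: K-F nef}, but there is a genuine gap at the decisive step: for non-integral rational $c$ the sheaf $f_*\CO_X(2K_X - cF)$ does not exist, and Lemma \ref{lem: nefness} applies only to an honest Weil divisor $L$. When you pass from $K_{X'} + \roundup{L_c}$ to a pushforward, the round-up turns the fibre coefficient $-c$ into $-\rounddown{c}$ (since $\roundup{-cF'} = -\rounddown{c}\,F'$; and distributing $c$ over several fibres with coefficients in $(0,1)$ is even worse, as those round up to $0$). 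So what your argument actually yields is an inclusion $\CF \hookrightarrow f_*\CO_X(2K_X - \rounddown{c}F)$ and hence nefness of $2K_X - \rounddown{c}F$ only. Letting $c \to \frac{9a+e}{10}$ therefore gains nothing beyond the integer part: you obtain nefness of $K_X - \frac{1}{2}\rounddown{\frac{9a+e}{10}}F$, not of $K_X - \frac{9a+e}{20}F$. This is strictly weaker --- for $e = 11$ and $a = 1$ it gives only $K_X - \frac{1}{2}F$, weaker than your hypothesis --- and it would break the recursion $a_{n+1} = \frac{9a_n+e}{20}$ in Proposition \ref{prop: final nefness}, whose limit $\frac{e}{11}$ must be attained exactly.

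The missing idea is the paper's \'etale base change. Writing $a = \frac{p}{q}$, the paper pulls back along degree-$nq$ \'etale covers $\nu_n: B_n \to B$, so that on $X_n = X \times_B B_n$ the coefficient $np + m$ with $m = \roundup{\frac{(qe-p)n}{10}} - 1$ is a genuine integer, the rounding loss is $O(1)$ against the denominator $nq$, and nefness of $2K_{X_n} - (np+m)F_n$ descends to nefness of $K_X - \left(\frac{a}{2} + \frac{m}{2nq}\right)F$; taking $n \to \infty$ reaches $\frac{9a+e}{20}$. Note also that raising the level on $X$ itself cannot substitute for the cover: the klt budget $\lambda < \frac{1}{10}$ buys a fixed gain of at most $\frac{e-a}{10}$ per round-up, which amortized over $(m+1)K_X$ tends to $0$ as $m$ grows; only the cover rescales $e$ and $a$ simultaneously so that the gain grows linearly in $n$. (Your final bigness computation, by contrast, is correct and essentially identical to the paper's.)
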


\begin{proof}
	We may write $a = \frac{p}{q}$ with $(p, q) = 1$. For any $n > 1$, let $\nu_n: B_n \to B$ be an \'etale cover of degree $nq$. Then $g(B_n) = 1$. Let $X_n = X \times_B B_n$. Then we have the following commutative diagram
	$$
	\xymatrix{
		X_n \ar[d]_{f_n} \ar[rr]^{\mu_n} & &  X \ar[d]^{f}  \\
		B_n \ar[rr]^{\nu_n} & & B
	}
    $$
    Denote by $F_n$ a general fibre of $f_n: X_n \to B_n$. Then we have
    $$
    K_{X_n} = \mu_n^*K_X \equiv nqeF_n + D_n,
    $$
    where $D_n = \mu_n^*D$. Let $\pi_n: X'_n \to X_n$ be birational modification as in \S \ref{subsection: notation} with respect to $|K_{X_n}|$. Since $X_n$ has terminal singularities, we may write
    $$
    K_{X'_n} = \pi_n^*K_{X_n} + E_{\pi_n}, 
    $$
    where $E_{\pi_n} \ge 0$ is a $\QQ$-divisor. We may further assume that $\pi_n^*D_n + E_{\pi_n}$ is simple normal crossing. By the assumption, 
    $$
    (qe - p)nF_n + D_n \equiv K_{X_n} - npF_n \equiv \mu_n^*(K_X - aF)
    $$
    Set $m = \roundup{\frac{(qe - p)n}{10}} - 1$. Then the $\QQ$-divisor
    $$
    \pi_n^* \left(K_{X_n} - (np + m)F_n - \frac{m}{(qe - p)n} D_n\right) \equiv \left(1 - \frac{m}{(qe - p)n}\right)\pi_n^*(K_{X_n} - npF_n)
    $$
    is nef and big with a simple normal crossing fractional part. Note that
    $$
    \frac{m}{(qe - p)n} < \frac{1}{10}.
    $$
    Thus we may run a similar argument as in the proof of Lemma \ref{lem: K-F nef} to deduce that $2K_{X_n} - (np + m) F_n$ is nef, which implies that $K_X - \left(\frac{a}{2} + \frac{m}{2nq} \right) F$ is nef. By the choice of $m$, we have 
    $$
    \lim\limits_{n \to + \infty} \frac{m}{nq} = \frac{e}{10} - \frac{q}{10 p} = \frac{e-a}{10}.
    $$
    Taking $n \to \infty$, it follows that $K_X - \frac{9a + e}{20}F$ is nef. Note that $K_X - aF$ is nef implies that $K_X^3 \ge 3aK_F^2 = 3a$, and Proposition \ref{prop: HZ1} (3) implies that $K_X^3 \ge \frac{4}{3}e$. It follows that
    $$
    \left(K_X - \frac{9a + e}{20}F\right)^3 = K_X^3 - \frac{27a + 3e}{20} \ge \frac{3a}{2} + \frac{2e}{3} - \frac{27a +3e}{20} > 0.
    $$
    Thus $K_X - \frac{9a + e}{20}F$ is also big, and the proof is completed.
\end{proof}

\begin{prop} \label{prop: final nefness}
	Suppose that $e \ge 11$. Then $K_X - \frac{e}{11}F$ is nef.
\end{prop}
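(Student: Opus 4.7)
The plan is to iterate Lemma \ref{lem: more nefness} starting from the base case provided by Lemma \ref{lem: K-F nef}, and take a limit using closedness of the nef cone. Specifically, define the sequence of rational numbers $a_0 = 1$ and $a_{n+1} = \frac{9a_n + e}{20}$. The fixed point of the map $T(a) = \frac{9a+e}{20}$ is exactly $a^* = \frac{e}{11}$, since $20 a^* = 9a^* + e$ rearranges to $11 a^* = e$.

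First I would establish by induction that $1 \le a_n \le \frac{e}{11}$ and $a_n \le a_{n+1}$ for all $n$. The base case $a_0 = 1 \le \frac{e}{11}$ holds because $e \ge 11$. For the inductive step, if $1 \le a_n \le \frac{e}{11}$, then
\[
a_{n+1} = \frac{9a_n + e}{20} \ge \frac{9 + 11}{20} = 1,\qquad a_{n+1} \le \frac{9 \cdot (e/11) + e}{20} = \frac{e}{11},
\]
and $a_{n+1} \ge a_n$ is equivalent to $e \ge 11 a_n$, which follows from $a_n \le \frac{e}{11}$. Since $\{a_n\}$ is monotonically increasing and bounded above by $\frac{e}{11}$, and $T$ is continuous, the sequence converges to the fixed point $\frac{e}{11}$.

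Next I would apply Lemma \ref{lem: more nefness} inductively: Lemma \ref{lem: K-F nef} says $K_X - a_0 F$ is nef and big, and given that $K_X - a_n F$ is nef and big with $a_n \ge 1$ and $e \ge 11$, Lemma \ref{lem: more nefness} yields that $K_X - a_{n+1} F$ is nef and big. Hence $K_X - a_n F$ is nef for every $n \ge 0$.

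Finally, since the nef cone of $X$ is closed and $a_n \to \frac{e}{11}$, passing to the limit in the inequality $\bigl((K_X - a_n F) \cdot C \bigr) \ge 0$ for every curve $C$ on $X$ gives that $K_X - \frac{e}{11} F$ is nef, completing the proof. There is no real obstacle here; the only thing to verify carefully is that the hypotheses of Lemma \ref{lem: more nefness} (namely $a_n \ge 1$ and $K_X - a_n F$ being nef and big) are preserved along the iteration, which is exactly what the monotonicity and boundedness computations above ensure.
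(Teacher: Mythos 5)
Your proposal is correct and is essentially identical to the paper's own proof: both iterate Lemma \ref{lem: more nefness} from the base case of Lemma \ref{lem: K-F nef} via the recursion $a_{n+1} = \frac{9a_n + e}{20}$, observe that the sequence increases to the fixed point $\frac{e}{11}$, and conclude by closedness of the nef cone. Your added verification that $1 \le a_n \le \frac{e}{11}$ and that nefness and bigness are preserved along the iteration simply fills in details the paper leaves as ``easy to see.''
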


\begin{proof}
	Since $e \ge 11$, by Lemma \ref{lem: K-F nef},  $K_X-F$ is nef. Take $a_1=1$ and define $a_{n+1} = \frac{9a_n+e}{20}$ recursively. Then it is easy to see that $\{a_n\}$ is an increasing sequence and bounded from above. Moreover, we have
	$$
	\lim\limits_{n \to \infty} a_n = \frac{e}{11}.
	$$
	By Lemma \ref{lem: more nefness}, $K_X-a_nF$ is nef for every $n \ge 2$. Thus $K_X - \frac{e}{11}F$ is nef.
\end{proof}

\subsection{Proof of Theorem \ref{thm: q=1 1}} Now we prove Theorem \ref{thm: q=1 1}. By \cite[Theorem 1.1]{Hu_Zhang2}, we have
$$
K_X^3 \ge \frac{4}{3} \chi(\omega_X) = \frac{4}{3}p_g(X) - \frac{4}{3}h^2(X, \CO_X).
$$
If $h^2(X, \CO_X) = 0$, then there is nothing to prove. Thus in the following, we assume that $h^2(X, \CO_X) > 0$.

We first treat the case when $p_g(X) \ge 4$. By Lemma \ref{lem: h2 not 0}, we know that $\dim \Sigma =2$. If $d \ge p_g(X)$, then the theorem follows from Proposition \ref{prop: HZ1} (3). Thus we only need to consider the case when $d = p_g(X) - 1$. If $p_g(X) \ge 16$. by Proposition \ref{prop: HZ2} and \ref{prop: final nefness}, we have
$$
K_X^3 \ge \frac{4}{3} \left(p_g(X) - 1\right) + \frac{15}{11} =  \frac{4}{3}p_g(X) + \frac{1}{33}.
$$
If $X$ is Gorenstein, by Proposition \ref{prop: HZ2} and \ref{prop: 4K-3F nef}, we have
$$
K_X^3 \ge \frac{4}{3} \left(p_g(X) - 1\right) + \frac{3}{4} = \frac{4}{3}p_g(X) - \frac{7}{12},
$$
i.e., $3K_X^3\ge 4p_g(X)-\frac{7}{4}$. Since now $K_X^3 > 0$ is an even integer \cite[\S 2.2]{Chen_Chen_Zhang}, we deduce that
$$
3K^3_X \ge 4p_g(X).
$$

We are left to treat the case when $X$ is Gorenstein and $p_g(X) \le 3$. Since $K_X^3 > 0$ is an even integer, the theorem is trivial when $p_g(X) \le 1$. We may assume that $2 \le p_g(X) \le 3$, and we only need to prove that $K_X^3 \ne 2$. By Lemma \ref{lem: h2 not 0}, we have
$$
f_* \omega_X = \CO_B \oplus \CL,
$$ 
where $\CL$ is a line bundle on $B$ with $\deg \CL = p_g(X) - 1$. Consider the following commutative diagram
$$
\xymatrix{
	Y \ar[d]_{f'} \ar[rr] & &  X \ar[d]^{f}  \\
	C \ar[rr]^{\nu} & & B
}
$$
where $\nu: C \to B$ is an \'etale cover of degree three, and $Y = X \times_B C$. It is easy to see that $g(C) = 1$ and that $Y$ is Gorenstein minimal. Moreover, $f': Y \to C$ is the Albanese fibration of $Y$ with general fibre isomorphic to $F$. Note that 
$$
f'_*\omega_Y = \nu^*f_* \omega_X =  \CO_C \oplus \nu^*\CL.
$$
Thus $\deg \nu^*\CL = 3 \deg \CL = 3p_g(X) - 3$ and $p_g(Y) = h^0(C, f'_*\omega_Y) = 3p_g(X) - 2$.

Suppose that $p_g(X) = 3$. Then $p_g(Y) = 7$. Thus $K_Y^3 \ge \frac{4}{3} p_g(Y) = \frac{28}{3}$, which implies that
$$
K_X^3 = \frac{1}{3} K^3_Y \ge \frac{28}{9} > 3.
$$
Suppose that $p_g(X) = 2$. Then $\deg \CL = 1$ and $p_g(Y) = 4$. Since $X$ is Gorenstein, by \cite[\S 2.1]{Chen_Chen_Zhang}, we know that 
$$
p_g(X) + q(X)- h^2(X, \CO_X) - 1 = \chi(\omega_X) > 0.
$$
Thus $h^2(X, \CO_X) = 1$ and $\chi(\omega_X) = 1$. Since $\Sym^2 H^0(F, K_F) \to H^0(F, 2K_F)$ is an injection, pushing forward by $f$, we obtain an injection
$$
\CO_B \oplus \CL \oplus \CL^{\otimes 2} = \Sym^2 f_* \omega_X \to f_*\CO_X(2K_X).
$$
In particular, $f_*\CO_X(2K_X)$ admits an indecomposable direct summand $\CV_2$, contained in the image of $\CL^{\otimes 2}$, such that $\mu(\CV_2) \ge \mu(\CL^{\otimes 2}) = 2$. Suppose on the contrary that $K_X^3 = 2$. By the Riemann-Roch formula, we have 
$$
h^0 \left(B, f_*\CO_X(2K_X)\right) = h^0(X, 2K_X) = \frac{1}{2}K_X^3 + 3 \chi(\omega_X) = 4.
$$
By Corollary \ref{coro: vector bundle ample}, $f_*\CO_X(2K_X)$ is ample. We deduce that $\mu_{\min}(f_*\CO_X(2K_X)) > 0$ and $\deg f_*\CO_X(2K_X) = 4$. Moreover, $\rank f_*\CO_X(2K_X) = h^0(F, 2K_F) = 4$. This forces $\rank \CV_2 = 1$. Together with the above injection, it implies that $f_*\CO_X(2K_X)$ admits another indecomposable direct summand $\CV_2' \ne \CV_2$, contained in the image of $\CL$, such that $\mu(\CV_2') \ge \mu(\CL) = 1$. The existence of such $\CV_2$ and $\CV'_2$ in turn forces
$$
f_*\CO_X(2K_X) = \CV_2 \oplus \CV'_2 \oplus \CE,
$$
where $\deg \CV_2 = 2$, $\CV_2'$ is a line bundle with $\deg \CV_2' = 1$, and $\CE$ is an indecomposable bundle of rank two with $\deg \CE = 1$. In particular, we have $\mu_{\min} \left(f_*\CO_X(2K_X)\right) = \frac{1}{2}$. Since $f'_* \CO_Y(2K_Y) = \nu^*f_*\CO_X(2K_X)$, we deduce that $\mu_{\min} \left(f'_* \CO_Y(2K_Y)\right) = \frac{3}{2}$. However, this cannon occur by Lemma \ref{lem: 2K-F nef}. As a result, we deduce that $K_X^3 \ne 2$. Thus the proof is completed.

\section{Irregular $3$-folds of general type with $q = 1$: Part II} \label{section: q=1 II}

Let $X$ be a minimal $3$-fold of general type with $q(X) = 1$. Let 
$$
f: X \to B
$$
be the Albanese fibration of $X$, where $B$ is a smooth curve of genus $g(B) = 1$. Throughout this section, we always assume that the general fibre $F$ of $f$ is not a $(1, 2)$-surface. The main result in this section is the following theorem.

\begin{theorem} \label{thm: q=1 2}
	Let $X$ be as above. Suppose that $X$ is Gorenstein or $p_g(X) \ge 6$. Then
	$$
	K_X^3 \ge \frac{4}{3} p_g(X),
	$$
	and the equality holds only when $X$ is Gorenstein and $p_g(X) = 3$.
\end{theorem}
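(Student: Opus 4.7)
The plan is to split according to whether $h^2(X, \CO_X)$ vanishes. First, if $h^2(X, \CO_X) = 0$, then since $q(X) = 1$ we have $\chi(\omega_X) = p_g(X) - h^2(X, \CO_X) + q(X) - 1 = p_g(X)$, and \cite[Theorem~1.1]{Hu_Zhang2} gives immediately $K_X^3 \ge \frac{4}{3}\chi(\omega_X) = \frac{4}{3}p_g(X)$.

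Suppose now $h^2(X, \CO_X) > 0$. The plan is to use that $F$ is not a $(1,2)$-surface and $g(B) = 1$ (so $K_{X/B} = K_X$), and apply the slope inequality \cite[Theorem~1.6]{Hu_Zhang1} to get
$$K_X^3 = K_{X/B}^3 \ge 2\deg f_*\omega_{X/B} = 2\deg f_*\omega_X.$$
Leray for $f$ yields $h^1(B, f_*\omega_X) \le h^2(X, \CO_X)$, and Riemann--Roch on the elliptic curve $B$ gives $\deg f_*\omega_X = p_g(X) - h^1(B, f_*\omega_X) \ge p_g(X) - h^2(X, \CO_X)$. Hence $K_X^3 \ge 2(p_g(X) - h^2(X, \CO_X))$, which already yields $K_X^3 \ge \frac{4}{3}p_g(X)$ whenever $p_g(X) \ge 3h^2(X, \CO_X)$.

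The hard subcase $p_g(X) < 3h^2(X, \CO_X)$ will be the main obstacle. Under $p_g(X) \ge 6$ we get $h^2(X, \CO_X) \ge 3$, hence $p_g(F) \ge 3$ and $K_F^2 \ge 2p_g(F) - 4 \ge 2$ by Noether's inequality. My plan is to combine three ingredients: (i) the refined slope inequality \cite[Theorem~1.7]{Hu_Zhang1} of the form $K_X^3 \ge \frac{4K_F^2}{K_F^2+4}\deg f_*(\omega_X \otimes f^*\alpha)$ for torsion $\alpha \in \mathrm{Pic}^0(B)$, which surpasses the plain slope when $K_F^2 \ge 8$; (ii) the canonical-map analysis of Subsection~\ref{subsection: albdim 2}, via Lemmas~\ref{lem: h^0(2L)} and~\ref{lem: extension} on a smooth member of the movable part of $|K_X|$; and (iii) the Fujita-type decomposition $f_*\omega_X = \mathcal{A} \oplus \mathcal{U}$ (with $\mathcal{A}$ ample and $\mathcal{U}$ polystable of degree zero, so that $h^2(X, \CO_X)$ counts the trivial-determinant Atiyah summands of $\mathcal{U}$), together with Lemma~\ref{lem: nefness} to upgrade this to nefness of $K_X - cF$ for suitable $c > 0$. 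Ingredient (i) suffices when $p_g(F)$ is large; when $p_g(F)$ is small, one combines (ii) and (iii) with the plain slope, adapting to fibres of larger volume the scheme developed in Section~4 for $(1,2)$-fibres.

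Finally, for $X$ Gorenstein with $p_g(X) \le 5$, I would also use $\chi(\omega_X) \ge 1$ and the parity $K_X^3 \in 2\mathbb{Z}$ from \cite[\S 2]{Chen_Chen_Zhang}; the subcase $p_g(X) \le 2$ reduces to slope, parity, and a short \'etale base-change argument paralleling the end of the proof of Theorem~\ref{thm: q=1 1}, while $p_g(X) \in \{3, 4, 5\}$ is pinned by the combined bounds. For equality $K_X^3 = \frac{4}{3}p_g(X)$, the plan is to note that the hard-subcase analysis gives strict inequality whenever $p_g(X) \ge 6$, so equality forces both $p_g(X) \le 5$ and hence $X$ Gorenstein (by the hypothesis of the theorem); the parity constraints $K_X^3 \in 2\mathbb{Z}$ and $\frac{4}{3}p_g(X) \in 2\mathbb{Z}$ (which force $3 \mid p_g(X)$) together with $\chi(\omega_X) \ge 1$ then isolate $p_g(X) = 3$ and $K_X^3 = 4$.
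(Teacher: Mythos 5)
Your opening reductions are correct and cleanly isolate the difficulty: if $h^2(X,\CO_X)=0$ then $\chi(\omega_X)=p_g(X)$ and a Severi-type bound finishes (though note that with only $K_X^3\ge\frac{4}{3}\chi(\omega_X)$ you cannot rule out equality in this case; the paper's inequality \eqref{eq: Severi q=1}, $K_X^3\ge 2\chi(\omega_X)$, gives the strictness the equality clause requires), and the slope inequality plus Riemann--Roch on $B$ gives $K_X^3\ge 2\deg f_*\omega_X\ge 2\left(p_g(X)-h^2(X,\CO_X)\right)$, so only the regime $h^2(X,\CO_X)>\frac{1}{3}p_g(X)$ remains. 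But from there the proposal stops being a proof. First, a concrete error: from $h^2(X,\CO_X)\ge 3$ you infer $p_g(F)\ge 3$. By Leray, $h^2(X,\CO_X)=h^1(B,f_*\omega_X)+h^0(B,R^1f_*\omega_X)$, and since $F$ is only assumed not to be a $(1,2)$-surface, $q(F)$ may be positive, so $R^1f_*\omega_X$ need not vanish and $h^2$ can be large while $h^1(B,f_*\omega_X)$ --- the only quantity Fujita's theorem compares with $\rank f_*\omega_X=p_g(F)$ --- stays small. (Contrast Section 4, where $q(F)=0$ kills $R^1f_*\omega_X$; that vanishing is unavailable here.) Second, the entire hard subcase is a list of ingredients, not an argument. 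Your ingredient (iii), bootstrapping nefness of $K_X-cF$ as in Section 4, rests on facts special to $(1,2)$-fibres: the unique base point of $|K_F|$ giving the section $\Gamma$, base point freeness of $|2K_F|$, $\rank f_*\omega_X=2$, and the lct bound of Proposition \ref{prop: lct}; ``adapting to fibres of larger volume'' is exactly what is not known, and the paper deliberately avoids this route in Section 5, instead running a trichotomy on $\dim\Sigma$ (Propositions \ref{prop: q=1 canonical dim 1}, \ref{prop: q=1 canonical dim 2}, \ref{prop: q=1 canonical dim 3}) using Xiao's slope inequality, the extension bound \eqref{eq: volume extension}, Debarre's $K_{S_0}^2\ge 2p_g(S)$, Nagata's degree bounds, and case-specific claims such as $K_{S_0}^2\ge 4\chi(\omega_S)-8$. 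None of this, nor any substitute, is carried out in your sketch.

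The Gorenstein endgame is likewise unsubstantiated. The ``combined bounds'' you have at that point, namely $K_X^3\ge 2(p_g(X)-h^2(X,\CO_X))$, $\chi(\omega_X)\ge 1$ and evenness of $K_X^3$, do not pin down $p_g(X)\in\{3,4,5\}$: for instance $p_g(X)=5$, $\chi(\omega_X)=1$ only yields $K_X^3\ge 2$, far from the required $K_X^3\ge 8$; the paper obtains $K_X^3\ge 2p_g(X)-2$ for $p_g(X)\ge 3$ from the $\dim\Sigma$ trichotomy combined with parity. Nor does $p_g(X)=2$ reduce to ``slope, parity and \'etale base change'': when $h^1(B,f_*\omega_X)=1$ the slope inequality gives only $K_X^3\ge 2$, and the \'etale trick closing Section 4 again exploits $\rank f_*\omega_X=2$, i.e.\ the $(1,2)$ hypothesis; the paper's Lemma \ref{lem: q=1 pg=2} instead splits on $p_g(F)$ and needs the nefness of $K_X-\epsilon F$ via Miyaoka/Ohno/Chen--Hacon, the $\Sym^2$-injectivity Lemma \ref{lem: Sym2 injective}, and Ohno's slope bound for $p_g(F)\ge 4$. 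Finally, your equality analysis presupposes strict inequality in the hard subcase, which you have not established. In short, the proposal correctly locates where the theorem is hard but leaves precisely those parts open, and one of its stated bridges ($h^2\ge 3\Rightarrow p_g(F)\ge 3$) is false as stated.
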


Note that the theorem is trivial when $X$ is Gorenstein and $p_g(X) \le 1$, because under this assumption we always have $K_X^3 \ge 2$ \cite[\S 2.2]{Chen_Chen_Zhang}. Thus in the following, we always assume that $p_g(X) \ge 2$. As in \S \ref{subsection: setting (1,2)-surface}, we still have the following commutative diagram:
$$
\xymatrix{
	& & X' \ar[d]_{\pi} \ar[lld]_{f'} \ar[rr]^{\psi} \ar[drr]^{\phi_{M}} & &  \Sigma' \ar[d]^{\tau}  \\
	B & & X  \ar@{-->}[rr]_{\phi_{K_X}} \ar[ll]^f  & & \Sigma         
}
$$
We still denote by $|M|$ the movable part of $|\rounddown{\pi^*K_X}|$ and by $F'$ a general fibre of $f'$. Note that by \cite[Theorem 1.6 and Corollary 1.9]{Zhang_Clifford}, $X$ satisfies the Severi inequality
\begin{equation} \label{eq: Severi q=1}
	K_X^3 \ge 2\chi(\omega_X)
\end{equation}
and the slope inequality
\begin{equation} \label{eq: slope q=1}
	K_X^3 \ge 2 \deg f_* \omega_X.
\end{equation}

\subsection{The case when $\dim \Sigma = 1$} We start with the following proposition.

\begin{prop} \label{prop: q=1 canonical dim 1}
	Suppose that $\dim \Sigma = 1$. 
	\begin{itemize}
		\item [(1)] If $p_g(X) = 3$, then $K_X^3 \ge \frac{32}{9}$.
		\item [(2)] If $p_g(X) = 4$, then $K_X^3 \ge \frac{27}{4}$;
		\item [(3)] If $p_g(X) \ge 5$, then $K_X^3 \ge 2p_g(X)$.
	\end{itemize}
\end{prop}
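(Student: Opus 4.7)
My plan is to split the analysis on $g(\Sigma')$ and combine several ingredients: Lemma \ref{lem: extension}, the Severi and slope inequalities \eqref{eq: Severi q=1} and \eqref{eq: slope q=1}, Debarre's Noether inequality $K_{S_0}^2 \ge 2p_g(S_0)$ for irregular surfaces of general type, and a Castelnuovo--de Franchis/Beauville-type fibration argument. Since $\dim \Sigma = 1$, on $X'$ we may write $M \equiv dS$ where $S$ is a general fibre of $\psi$ and $d = (\deg\tau)(\deg\Sigma)$; the algebraic equivalence of fibres of $\psi$ gives $S^2 = 0$ as a numerical cycle, while non-degeneracy of $\Sigma \subset \PP^{p_g(X)-1}$ forces $d \ge p_g(X) - 1$.

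First suppose $g(\Sigma') \ge 1$. Using $q(X) = q(X') = 1$ and the connectedness of the fibres of both $\psi$ and of the Albanese fibration $f'$ of $X'$, the universal property of the Albanese map forces $g(\Sigma') = 1$, $\Sigma' = B$, and $\psi = f'$. Hence $S = F'$, $S_0 = F$, $\Sigma$ is an elliptic normal curve of degree $p_g(X)$ in $\PP^{p_g(X)-1}$, and $d = p_g(X)$. Using $S^2 = 0$ and $(\pi^*K_X)|_{F'} = (\pi|_{F'})^*K_F$, one gets $K_X^3 \ge (\pi^*K_X)^2 \cdot M = p_g(X) K_F^2$. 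If $K_F^2 \ge 2$ this already gives $K_X^3 \ge 2p_g(X)$; otherwise $K_F^2 = 1$, and the hypothesis that $F$ is not a $(1,2)$-surface together with $p_g(X) \ge 2$ forces $p_g(F) = 1$, so $f_*\omega_X$ is a line bundle of degree $p_g(X)$ on the elliptic curve $B$ and the slope inequality \eqref{eq: slope q=1} yields $K_X^3 \ge 2p_g(X)$, far exceeding all three required bounds.

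Next suppose $g(\Sigma') = 0$, so $\Sigma' \cong \PP^1$. Applying Lemma \ref{lem: extension} with $\lambda = 1/d$ to the effective divisor $\pi^*K_X - dS$ gives
\[
K_X^3 \ge \frac{d^3}{(d+1)^2}\, K_{S_0}^2 \ge \frac{(p_g(X)-1)^3}{p_g(X)^2}\, K_{S_0}^2.
\]
Since no non-constant morphism $\PP^1 \to B$ exists, $f'|_S \colon S \to B$ is non-constant (otherwise $\psi$ would factor through $f'$, producing disconnected fibres and contradicting the Stein factorization), so $q(S) \ge 1$ and Debarre's inequality yields $K_{S_0}^2 \ge 2p_g(S_0) \ge 2$. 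When $K_{S_0}^2 \ge 4$, the above extension inequality directly produces the three claimed bounds, with the sharp values $32/9$ and $27/4$ attained exactly in the case $d = p_g(X) - 1$ with $K_{S_0}^2 = 4$ for $p_g(X) = 3, 4$, and $K_X^3 \ge \frac{4(p_g(X)-1)^3}{p_g(X)^2} \ge 2p_g(X)$ for $p_g(X) \ge 5$.

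The hardest case is $K_{S_0}^2 \le 3$. Debarre then forces $p_g(S) \le 1$; since $p_g(S) = 0$ would give $p_g(X) = 0$, we have $p_g(S) = 1$ and $\psi_*\omega_{X'} = \CO_{\PP^1}(p_g(X) - 1)$. To rule out $q(S) \ge 2$, note that it would give $\chi(\omega_{S_0}) = 2 - q(S) \le 0$, and Beauville's theorem would furnish a fibration $S_0 \to C$ with $g(C) \ge 2$; its general fibre $D$ cannot be rational (by the nefness of $K_{S_0}$, which would give $K_{S_0} \cdot D = -2 < 0$) nor elliptic (such a fibration forces $K_{S_0}^2 = 0$, incompatible with $S_0$ being of general type), so $g(D) \ge 2$, and $K_{S_0}^2 \ge K_{S_0/C}^2 + 8(g(C)-1)(g(D)-1) \ge 8$, contradicting $K_{S_0}^2 \le 3$. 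Hence $q(S) = 1$, and the Leray identity $q(X) = h^0(\PP^1, R^1\psi_*\CO_{X'})$ combined with the rank-one conclusion and the anti-nefness of $R^1\psi_*\CO_{X'}$ (from Grothendieck--Serre duality against the nef $R^1\psi_*\omega_{X'/\PP^1}$) forces $R^1\psi_*\CO_{X'} = \CO_{\PP^1}$; dually $R^1\psi_*\omega_{X'} = \CO_{\PP^1}(-2)$, and together with $h^1(\PP^1, \psi_*\omega_{X'}) = 0$ the Leray spectral sequence gives $h^2(X, \CO_X) = 0$, whence Severi \eqref{eq: Severi q=1} yields $K_X^3 \ge 2p_g(X)$, exceeding all required bounds. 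The main obstacle is this final sub-case, where one must combine the Beauville fibration argument with Grothendieck--Serre duality and Leray to pin down $h^2(X, \CO_X)$.
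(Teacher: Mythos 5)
Your proposal follows the paper's proof in all essentials: the same dichotomy on $g(\Sigma')$, with the elliptic-base case handled identically (the universal property of the Albanese map gives $\psi = f'$ and $b = p_g(X)$, then either $K_F^2 \ge 2$ directly, or $p_g(F) = 1$ and the slope inequality \eqref{eq: slope q=1} with $h^1(B, f_*\omega_X) = 0$); for $g(\Sigma') = 0$, the same application of Lemma \ref{lem: extension} with $\lambda = 1/(p_g(X)-1)$, the same threshold $K_{S_0}^2 \ge 4$ versus $K_{S_0}^2 \le 3$, Debarre's inequality, and the same endgame $p_g(S) = q(S) = 1$, $h^2(X, \CO_X) = 0$, and the Severi inequality \eqref{eq: Severi q=1}. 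Your duality detour to pin down $R^1\psi_*\omega_{X'}$, via $R^1\psi_*\CO_{X'} \cong \left(R^1\psi_*\omega_{X'/\Sigma'}\right)^{\vee}$, nefness of $R^1\psi_*\omega_{X'/\Sigma'}$, and $h^0(\PP^1, R^1\psi_*\CO_{X'}) = q(X) = 1$, is a sound equivalent of the paper's direct Leray count $h^1(\Sigma', R^1\psi_*\omega_{X'}) = h^2(X', \omega_{X'}) - g(\Sigma') = 1$, which forces $R^1\psi_*\omega_{X'} = \omega_{\Sigma'}$.

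There is, however, one genuine soft spot: your exclusion of $q(S) \ge 2$. Castelnuovo--de Franchis produces an irrational pencil of genus at least two only when $p_g(S) \le 2q(S) - 4$, which with $p_g(S) = 1$ covers only $q(S) \ge 3$; at $q(S) = 2$ one has $\chi(\omega_{S_0}) = 0$, and there is no theorem of Beauville extracting a fibration onto a curve of genus $\ge 2$ from $\chi \le 0$ for a surface of general type --- the available classification statement ($\chi < 0$ implies ruled over a curve of genus $\ge 2$) does not reach $\chi = 0$, so your chain breaks exactly there. The immediate patch is the classical fact that a minimal surface of general type satisfies $\chi(\CO_{S_0}) \ge 1$, equivalently $p_g(S) \ge q(S)$, which rules out $q(S) \ge 2$ at once and renders your fibration detour unnecessary; this is precisely what the paper uses in the chain $K_{S_0}^2 \ge 2p_g(S) \ge 2q(S)$. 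A second, minor slip: if $f'|_S$ were constant, rigidity would make $f'$ factor through $\psi$ (yielding a nonconstant map $\PP^1 \to B$, absurd), not $\psi$ through $f'$ as you wrote; the conclusion $q(S) \ge 1$ stands.
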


\begin{proof}
	As in the proof of Proposition \ref{prop: albdim 2 canonical dim 1 case 1}, we may write 
	$
	M \equiv bS,
	$
	where $S$ is a general fibre of $\psi$. Let $\sigma: S \to S_0$ be the contraction onto the minimal model $S_0$ of $S$.
	
    We first consider the case when $g(\Sigma') > 0$. In this case, since $q(X) = 1$, we have $g(\Sigma') = 1$. Thus the universal property of the Albanese map implies that $\psi = f'$, and we may assume that $S = F'$ and $\Sigma' = B$. Moreover, $g(\Sigma') = 1$ implies that $b = p_g(X)$. Thus it follows that
	$$
	K_X^3 = (\pi^*K_X)^3 \ge b \left((\pi^*K_X)^2 \cdot F'\right) = p_g(X) K_F^2.
	$$
	If $K_F^2 \ge 2$, then the above inequalities implies that
	$$
	K_X^3 \ge 2p_g(X).
	$$ 
	If $K_F^2 = 1$, since $F$ is not a $(1, 2)$-surface and $p_g(X) > 0$, we deduce that $p_g(F) = 1$. Now $f_* \omega_X$ is a line bundle on $B$, and $h^0(B, f_* \omega_X) = p_g(X) \ge 2$. Thus $h^1(B, f_* \omega_X) = 0$. By \eqref{eq: slope q=1} and the Riemann-Roch theorem, we have
	$$
	K_X^3 \ge 2 \deg f_* \omega_X = 2p_g(X).
	$$
	
	We now assume that $g(\Sigma') = 0$. In this case, $f'|_S: S \to B$ is non-trivial. Thus $q(S) > 0$. Suppose first that $K_{S_0}^2 \ge 4$. As in the proof of Proposition \ref{prop: albdim 2 canonical dim 1 case 1}, we have
	$$
	K_X^3 \ge \frac{\left(p_g(X) - 1\right)^3}{\left(p_g(X)\right)^2} K_{S_0}^2 \ge \frac{4\left(p_g(X) - 1\right)^3}{\left(p_g(X)\right)^2},
	$$ 
	and it is easy to check that the three inequalities in the proposition hold. It remains to treat the case when $K_{S_0}^2 \le 3$. By \cite[Th\'eor\`eme 6.1]{Debarre}, $K_{S_0}^2 \ge 2p_g(S) \ge 2q(S)$. This forces $p_g(S) = q(S) = 1$. First, $p_g(S) = 1$ implies that $ \psi_* \omega_{X'}$ is a line bundle, and $p_g(X) \ge 2$ further implies that $\psi_*\omega_{X'}$ is ample. Second, $q(S) = 1$ implies that $R^1\psi_*\omega_{X'}$ is a line bundle. By the Leray spectral sequence, $h^1(\Sigma', R^1\psi_*\omega_{X'}) = h^2(X', \omega_{X'}) - g(\Sigma') = 1$. Thus $R^1\psi_*\omega_{X'} = \omega_{\Sigma'}$. By the Leray spectral sequence again, we deduce that
	$$
	h^2(X, \CO_X) =h^1(X',\omega_{X'})= h^1(\Sigma',  \psi_* \omega_{X'}) + h^0(\Sigma', R^1 \psi_* \omega_{X'}) = 0.
	$$
	By \eqref{eq: Severi q=1}, it follows that
	$$
	K_X^3 \ge 2\chi(\omega_X) = 2p_g(X) + 2q(X) - 2 = 2p_g(X).
	$$
	The whole proof is completed.
\end{proof}

\subsection{The case when $\dim \Sigma = 2$} We now consider the case when $\dim \Sigma = 2$. Thus $p_g(X) \ge 3$. Since $\dim \phi_{K_X}(F) \ge 1$, we have $p_g(F) \ge 2$. We still denote by $C$ a general fibre of $\psi$, where $C$ is a smooth curve of genus $g(C) \ge 2$. 

As in \S \ref{subsubsection: albdim 2 canonical dim 2}, we may assume that $X'$ and $\Sigma'$ are smooth. Let $S \in |M|$ be a smooth general member. Denote by $\sigma: S \to S_0$ the contraction onto its minimal model. Then we have $M|_S \equiv d C$, where $d = (\deg \tau) \cdot (\deg \Sigma) \ge p_g(X) - 2$. Let $H'$ be a smooth curve on $\Sigma'$ such that $S = \psi^*H'$.

\begin{lemma} \label{lem: q=1 canonical dim 2 case 1}
	Suppose that $\dim \Sigma = 2$ and $C$ is $f'$-vertical. Then $K_X^3 \ge 2p_g(X) - 2$.
\end{lemma}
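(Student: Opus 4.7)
My strategy is to factor $f'$ through $\psi$ using the hypothesis, and then combine the Severi inequality \eqref{eq: Severi q=1} with the $\chi$-estimate of Lemma~\ref{lem: albdim 2 canonical dim 2 chi} to produce the bound.

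First, since a general fibre $C$ of $\psi$ is contracted to a point by $f'$, the morphism $f'$ factors uniquely through $\psi$: there is $\rho:\Sigma'\to B$ with $f'=\rho\circ\psi$. As $q(\Sigma')\le q(X')=1$ and $\rho$ is non-constant, we have $q(\Sigma')=1$, so $\rho$ is the Albanese fibration of $\Sigma'$ with connected general fibre $H'_b$, and $F'=\psi^{-1}(H'_b)$. The ample hyperplane section $H'$ is not $\rho$-vertical, so $\rho|_{H'}:H'\to B$ is finite of positive degree $e=(H'\cdot H'_b)_{\Sigma'}\ge 1$, and hence $g(H')\ge g(B)=1$.

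Second, since $H'$ is ample on $\Sigma'$, Kodaira vanishing gives $h^i(\Sigma',K_{\Sigma'}+H')=0$ for $i>0$, and Riemann--Roch together with adjunction yield
\begin{equation*}
h^0(\Sigma',K_{\Sigma'}+H')=\chi(\CO_{\Sigma'})+g(H')-1=p_g(\Sigma')+g(H')-1.
\end{equation*}
Plugging this into Lemma~\ref{lem: albdim 2 canonical dim 2 chi} and using $\deg\Sigma\ge p_g(X)-2$ gives
\begin{equation*}
\chi(\omega_X)+\chi(\omega_S)\ge 3p_g(X)-2-p_g(\Sigma')-g(H').
\end{equation*}

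Third, apply the Severi inequality \eqref{eq: Severi q=1}, namely $K_X^3\ge 2\chi(\omega_X)$, and combine it with $K_X^3\ge\chi(\omega_S)$. When $S$ has maximal Albanese dimension, this second inequality follows from the extension bound \eqref{eq: volume extension} and Pardini's Severi inequality $K_{S_0}^2\ge 4\chi(\omega_S)$. Summing yields $\tfrac{3}{2}K_X^3\ge 3p_g(X)-2-p_g(\Sigma')-g(H')$; once we establish $p_g(\Sigma')+g(H')\le 1$, the bound $K_X^3\ge 2p_g(X)-2$ follows.

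The main obstacle is twofold. First, controlling $p_g(\Sigma')+g(H')$ requires analysing the elliptic fibration $\rho:\Sigma'\to B$ together with the nef pushforward $\rho_*\CO_{\Sigma'}(H')$ on the elliptic curve $B$, which has $h^0(B,\rho_*\CO_{\Sigma'}(H'))=p_g(X)$ and whose slope bounds force $p_g(\Sigma')$ and $g(H')$ to be small relative to $p_g(X)$. Second, when $\alb\dim S<2$, Pardini's Severi inequality is unavailable; in that case I would instead use Xiao's slope inequality applied to the fibration $\psi|_S:S\to H'$, together with the $\QQ$-effectivity of $K_S-2M|_S$ (a consequence of $M\le\pi^*K_X$ and the adjunction $K_S=(K_{X'}+M)|_S$), to construct a sub-line bundle of $(\psi|_S)_*\omega_{S/H'}$ of large degree, in the manner of the proof of Lemma~\ref{lem: albdim 2 canonical dim 2 case 2}.
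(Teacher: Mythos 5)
There is a genuine gap, and it sits exactly where you flag your ``first obstacle'': your entire main chain reduces the lemma to the inequality $p_g(\Sigma') + g(H') \le 1$, which you never prove and which is false in general. Since $g(H') \ge g(B) = 1$ (as you yourself note), you would need $p_g(\Sigma') = 0$ and $g(H') = 1$ exactly; but $H'$ is the pullback of a general hyperplane section of $\Sigma$, whose degree is at least $p_g(X) - 2$, and nothing bounds its genus by an absolute constant --- an elliptic scroll gives $g(H') = 1$, but irregular surfaces over an elliptic base with hyperplane sections of arbitrarily large genus certainly occur. Each unit of $p_g(\Sigma') + g(H')$ costs $\tfrac{2}{3}$ in your final estimate, so already $g(H') = 2$ only yields $K_X^3 \ge 2p_g(X) - \tfrac{8}{3}$, weaker than claimed; a bound of the form ``small relative to $p_g(X)$'' is useless here. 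A structural symptom of the problem: your argument nowhere uses the standing hypothesis of this section that $F$ is \emph{not} a $(1,2)$-surface, yet that hypothesis is indispensable --- the $3$-folds of \cite[Theorem 1.3]{Hu_Zhang2} with $K_X^3 = \tfrac{4}{3}p_g(X)$, $q(X)=1$ and $(1,2)$-surface fibres realize precisely the configuration $\dim \Sigma = 2$ with $C$ vertical, and for them $K_X^3 < 2p_g(X)-2$ once $p_g(X) > 3$. Since every ingredient you invoke (the $\chi$-estimate of Lemma~\ref{lem: albdim 2 canonical dim 2 chi}, the Severi inequality \eqref{eq: Severi q=1}, the extension bound \eqref{eq: volume extension} with Pardini) holds equally in that setting, no combination of them alone can prove the lemma. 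Your fallback for $\alb\dim S < 2$ is likewise only a sketch, and the version of Xiao's slope inequality with constant $4$ used in Lemma~\ref{lem: albdim 2 canonical dim 2 case 2} requires $q(S) > g(H')$, which may fail when $\psi|_S$ is itself the Albanese fibration of $S$.

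The paper's actual proof is a short direct intersection computation that uses the non-$(1,2)$ hypothesis at the crucial point. Since $C$ lies in a general fibre of $f'$,
$$
K_X^3 \ge \left((\pi^*K_X) \cdot M^2\right) = d\left((\pi^*K_X)|_{F'} \cdot C\right) = d\,(K_F \cdot C_0), \qquad C_0 = \pi_* C.
$$
If $(K_F \cdot C_0) = 1$, the Hodge index theorem forces $K_F^2 = 1$ and $F$ a $(1,2)$-surface, contradicting the hypothesis; hence $(K_F \cdot C_0) \ge 2$. Finally, because $f'$ factors through $\psi$, the surface $\Sigma'$ is irregular, and Nagata's theorem (\cite[Theorem 7]{Nagata}) rules out $d = p_g(X) - 2$, giving $d \ge p_g(X) - 1$ and therefore $K_X^3 \ge 2d \ge 2p_g(X) - 2$. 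You correctly identified the factorization $f' = \rho \circ \psi$ and the irregularity of $\Sigma'$, but the paper converts this into the degree bound $d \ge p_g(X)-1$ via Nagata rather than into cohomological estimates; if you want to salvage your write-up, that substitution --- together with a lower bound on $(\pi^*K_X \cdot C)$ coming from the $(1,2)$-exclusion --- is the missing mechanism.
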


\begin{proof}
	By the assumption, $C$ is contained in a general fibre of $f'$. Thus we have
	$$
	K_X^3 \ge \left( (\pi^*K_X) \cdot M^2 \right) = d \left( (\pi^*K_X)|_{F'} \cdot C\right) = d (K_F \cdot C_0).
	$$
	where $C_0 = \pi_*C$. Note that $(K_F \cdot C_0) \ge 2$. Otherwise, we would have $(K_F \cdot C_0) = 1$. By the Hodge index theorem, we deduce that $K_F^2 = 1$, and $F$ is a $(1, 2)$-surface, which is absurd. Now $f'$ factors through $\psi$. It implies that $\Sigma'$ is an irregular surface. By \cite[Theorem 7]{Nagata}, we know that $d \ge p_g(X) - 1$. Combine the above inequalities together, and it follows that
	$$
	K_X^3 \ge 2p_g(X) - 2. 
	$$
	Thus the proof is completed.
\end{proof}

\begin{lemma} \label{lem: q=1 canonical dim 2 case 2}
	Suppose that $\dim \Sigma = 2$ and $C$ is not $f'$-vertical. 
	\begin{itemize}
		\item [(1)] If $g(H') \ge 1$, then $K_X^3 \ge 2p_g(X) - 2$;
		
		\item [(2)] If $g(H') = 0$, then
		$$
		K_X^3 \ge \min\left\{\frac{5}{2}p_g(X) - 5,  2p_g(X) - \frac{10}{3}\right\}.
		$$
	\end{itemize}
\end{lemma}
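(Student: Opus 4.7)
The overall plan is to transplant the proof strategies of Lemmas \ref{lem: albdim 2 canonical dim 2 case 1} and \ref{lem: albdim 2 canonical dim 2 case 2} to the $q(X)=1$ setting, replacing the Severi inequality $K_X^3 \ge 4\chi(\omega_X)$ by its weaker analogue \eqref{eq: Severi q=1}. In both parts the starting point is the volume extension \eqref{eq: volume extension}, i.e.\ $4K_X^3 \ge K_{S_0}^2$, together with the relations $M|_S \equiv dC$ and $d \ge p_g(X) - 2$.

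For part (1), since $g(H') \ge 1$ and $g(C) \ge 2$, the fibration $\psi|_S : S \to H'$ descends via $\sigma$ to a relatively minimal fibration $h_0 : S_0 \to H'$. I would then run the slope argument of Lemma \ref{lem: albdim 2 canonical dim 2 case 2} essentially verbatim: the adjunction $K_S \ge 2M|_S$ produces the inclusion $h_*\CO_S(2M|_S)\otimes \omega_{H'}^{-1} \hookrightarrow h_*\omega_{S/H'}$, and since $M|_S$ is numerically a multiple of a fibre of $h$ the left-hand bundle has degree $\ge 2d - 2(g(H')-1)$. Xiao's inequality \cite[Corollary 1]{Xiao}, combined with $K_{S_0}^2 = K_{S_0/H'}^2 + 8(g(H')-1)(g(C)-1)$, gives $K_{S_0}^2 \ge 8d$, so $K_X^3 \ge 2d$ by \eqref{eq: volume extension}. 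Finally, $g(H') \ge 1$ forces $\Sigma$ to be non-minimal in degree, so \cite[Theorem 7]{Nagata} yields $d \ge p_g(X) - 1$, hence $K_X^3 \ge 2p_g(X) - 2$. The branch $\alb\dim S < 2$ would be handled separately, following the corresponding subcase of Lemma \ref{lem: albdim 2 canonical dim 2 case 2}.

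For part (2), the adjunction on $\Sigma'$ gives $(K_{\Sigma'} \cdot H') < 0$, whence $p_g(\Sigma') = 0$, and the sequence $0 \to \CO_{\Sigma'}(K_{\Sigma'}) \to \CO_{\Sigma'}(K_{\Sigma'}+H') \to \omega_{H'} \to 0$ then shows $h^0(\Sigma', K_{\Sigma'}+H') = 0$. Lemma \ref{lem: albdim 2 canonical dim 2 chi} thus simplifies to
\begin{equation*}
\chi(\omega_X) + \chi(\omega_S) \ge 2p_g(X) - 1 + \min\{\deg \Sigma,\, 2p_g(X) - 5\}.
\end{equation*}
In the subcase $\alb\dim S = 2$, Pardini's Severi inequality combined with \eqref{eq: volume extension} gives $K_X^3 \ge \chi(\omega_S)$, while \eqref{eq: Severi q=1} gives $K_X^3 \ge 2\chi(\omega_X)$; taking the convex combination $K_X^3 \ge \tfrac{2}{3}(\chi(\omega_X) + \chi(\omega_S))$ and invoking $\deg \Sigma \ge p_g(X) - 2$ produces $K_X^3 \ge 2p_g(X) - 2 \ge 2p_g(X) - \tfrac{10}{3}$.

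The main obstacle is the remaining subcase $\alb\dim S \le 1$ of part (2). Here Pardini's inequality on $S$ is unavailable, and since $g(H') = 0$ contributes nothing to $q(S)$ the Albanese fibration of $S$ is forced to be $f'|_S : S \to B$. I would instead estimate $K_X^3$ by intersection theory on $X'$: the general fibre $D = S \cap F'$ of $f'|_S$ lies in a general Albanese fibre $F'$ and represents the movable part of $\pi^*K_X|_{F'}$, so from $K_X^3 \ge d \cdot ((\pi^*K_X)|_{F'} \cdot D) = d \cdot (K_F \cdot (\pi|_{F'})_*D)$, together with a Hodge-index-type bound on $(K_F \cdot (\pi|_{F'})_*D)$ that exploits the hypothesis that $F$ is \emph{not} a $(1,2)$-surface, one should extract the remaining bound $K_X^3 \ge \tfrac{5}{2}p_g(X) - 5$. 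This last step is the delicate one, as it is precisely where the geometric hypothesis on $F$ plays its decisive role.
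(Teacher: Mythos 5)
Your part (1) is essentially the paper's argument (Xiao's slope inequality for $h=\psi|_S$, the inclusion $h_*\CO_S(2M|_S)\otimes\omega_{H'}^{-1}\hookrightarrow h_*\omega_{S/H'}$ giving $\deg h_*\omega_{S/H'}\ge 2d-2(g(H')-1)$, and Nagata's theorem yielding $d\ge p_g(X)-1$), but your deferral of the branch $\alb\dim S<2$ is off the mark. You cannot handle it by ``following the corresponding subcase of Lemma \ref{lem: albdim 2 canonical dim 2 case 2}'': that argument concludes by factoring the Albanese map of $X'$ through $\psi$, so it forces $\alb\dim X\ge 2$, which is impossible in this section ($q(X)=1$, the Albanese fibration is $f'$). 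In fact the branch is vacuous: if $\alb\dim S=1$, then both $f'|_S$ and (since $g(H')\ge 1$) $\psi|_S$ factor through the Albanese fibration of $S$, which would force the general fibre $C$ of $\psi|_S$ to be $f'$-vertical, contradicting the hypothesis. The same observation shows $\alb\dim S=2$ always holds here, whence $q(S)>g(H')$ --- the condition Xiao's Corollary 1 actually needs, and which you never verified.

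In part (2), your subcase $\alb\dim S=2$ is correct (Pardini plus \eqref{eq: volume extension} and \eqref{eq: Severi q=1} even gives $K_X^3\ge 2p_g(X)-2$), but the complementary subcase contains a genuine gap. Your displayed inequality $K_X^3\ge d\cdot\bigl((\pi^*K_X)|_{F'}\cdot D\bigr)$ is unjustified: $M^2\equiv dC$ produces the $f'$-\emph{horizontal} curve $C$, not the fibre $D=S\cap F'$, and converting to an intersection on $F'$ while keeping the factor $d$ would require something like the pseudo-effectivity of $\pi^*K_X-dF'$, which is unknown --- establishing even much weaker nefness statements of the form $K_X-aF$ nef is the entire content of Section 4, and only under extra hypotheses. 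The paper's proof avoids any Albanese dichotomy at this point. By Kawamata's extension theorem \cite[Corollary 2.3]{Chen_Chen_Jiang}, the $\QQ$-divisor $2(\pi^*K_X)|_S-\sigma^*K_{S_0}$ is pseudo-effective, so $(\pi^*K_X\cdot C)\ge\frac{1}{2}(K_{S_0}\cdot C_0)$. If $(K_{S_0}\cdot C_0)\ge 5$, then $K_X^3\ge d\,(\pi^*K_X\cdot C)\ge\frac{5}{2}p_g(X)-5$; note this, and not the non-$(1,2)$ hypothesis on $F$, is the source of that constant (that hypothesis is used in Lemma \ref{lem: q=1 canonical dim 2 case 1}, where $C$ is vertical, via the Hodge index theorem). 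If $(K_{S_0}\cdot C_0)\le 4$, pseudo-effectivity of $K_{S_0}-2dC_0$ forces $C_0^2\in\{0,1,2\}$, and a parity analysis proves the uniform claim $K_{S_0}^2\ge 4\chi(\omega_S)-8$ with no assumption on $\alb\dim S$: when $C_0^2=0$, adjunction and parity give $g(C_0)\in\{2,3\}$ and Xiao's Corollary 1 applies; when $C_0^2>0$, the Hodge index theorem gives $K_{S_0}^2\le 9$ and Bombieri's $K_{S_0}^2\ge 2\chi(\omega_S)$ suffices. Combining the claim with $\chi(\omega_X)+\chi(\omega_S)\ge 3p_g(X)-3$, \eqref{eq: Severi q=1} and \eqref{eq: volume extension} then yields $K_X^3\ge 2p_g(X)-\frac{10}{3}$. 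Your proposal has no working substitute for this mechanism, and you acknowledge as much by calling the step ``delicate''; as written, the lemma is not proved.
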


\begin{proof}
	Since $S$ admits a surjective morphism $f'|_S: S \to B$, we know that $q(S) > 0$. By the assumption, $f'|_S$ does not factor through $\psi|_S: S \to H'$. 
	
	If $g(H') \ge 1$, then we deduce that $\alb \dim S = 2$. By the same argument as in the proof of Lemma \ref{lem: albdim 2 canonical dim 2 case 2}, we deduce that $K_X^3 \ge 2d$. By \cite[Theorem 7]{Nagata}, we have $d \ge p_g(X) - 1$. Thus it follows that 
	$$
	K_X^3 \ge 2p_g(X) - 2.
	$$
	The proof of (1) is completed. 
	
	In the following, we assume that $g(H') = 0$.
	By \cite[Corollary 2.3]{Chen_Chen_Jiang}, the $\QQ$-divisor $2(\pi^*K_X)|_S - \sigma^*K_{S_0}$ is pseudo-effective. Thus we have
	$$
	2\left( (\pi^*K_X) \cdot C \right) \ge \left((\sigma^*K_{S_0}) \cdot C \right) = (K_{S_0} \cdot C_0),
	$$
	where $C_0 = \sigma_* C$. If $(K_{S_0} \cdot C_0) \ge 5$, then $\left( (\pi^*K_X) \cdot C \right) \ge \frac{5}{2}$ and we have
	$$
	K_X^3 \ge \left( (\pi^*K_X) \cdot M^2 \right) = d \left( (\pi^*K_X) \cdot C \right) \ge \frac{5}{2}p_g(X) - 5.
	$$
	Thus we may assume that $(K_{S_0} \cdot C_0) \le 4$. Note that by the adjunction, $K_S - 2M|_S \equiv K_S - 2dC$ is pseudo-effective. Thus $K_{S_0} - 2dC_0$ is pseudo-effective. It follows that
	$$
	4 \ge (K_{S_0} \cdot C_0) \ge 2d C_0^2 \ge 2C_0^2 \left(p_g(X) - 2\right) \ge 2C_0^2.
	$$
	In particular, $C_0^2 = 0$, $1$ or $2$. We claim that in all cases, 
	$$
	K_{S_0}^2 \ge 4 \chi(\omega_S) - 8.
	$$ 
	Suppose the claim holds. Then as in the proof of Lemma \ref{lem: albdim 2 canonical dim 2 case 1}, we know that
	$$
	\chi(\omega_X) + \chi(\omega_S) \ge 3p_g(X) - 3.
	$$
	Combine the above inequalities with \eqref{eq: volume extension} and \eqref{eq: Severi q=1}. It follows that
	$$
	K_X^3 \ge \frac{1}{3}K_X^3 + \frac{1}{6} K_{S_0}^2 \ge \frac{2}{3} \left(\chi(\omega_X) + \chi(\omega_S) - 2\right) \ge 2p_g(X) - \frac{10}{3}.
	$$
	
	To prove the above claim, suppose first that $C_0^2 = 0$. Then $\psi|_S$ descends to a fibration $h: S_0 \to H'$ with $C_0$ as a general fibre. By the adjunction and parity, $g(C_0) = 2$ or $3$. Since $q(S_0) > 0$, by \cite[Corollary 1]{Xiao}, we have
	$$
	K_{S_0}^2 + 8 \left(g(C_0) - 1\right) = K_{S_0/H'}^2 \ge 4 \deg h_* \omega_{S_0/H'} = 4 \chi(\omega_S) + 4 \left(g(C_0) - 1\right).
	$$
	That is,
	$$
	K_{S_0}^2 \ge 4 \chi(\omega_S) - 4\left(g(C_0) - 1\right) \ge 4\chi(\omega_S) - 8.
	$$
	Suppose that $C_0^2 > 0$. Then by parity, either $(K_{S_0} \cdot C_0) = 3$ and $C_0^2 = 1$, or $(K_{S_0} \cdot C_0) = 4$ and $C_0^2 = 2$. By the Hodge index theorem, we always have $K_{S_0}^2 \le 9$. By \cite[Lemma 14]{Bombieri}, we know that $\chi(\omega_S) \le 4$ and
	$$
	K_{S_0}^2 \ge 2\chi(\omega_S) \ge 4\chi(\omega_S) - 8.
	$$
	Thus the claim holds, and the proof of the lemma is completed.
\end{proof}

\begin{prop} \label{prop: q=1 canonical dim 2}
	Suppose that $q(X) = 1$ and $\dim \Sigma = 2$. Then
	$$
	K_X^3 \ge \min\left\{\frac{5}{2}p_g(X) - 5,  2p_g(X) - \frac{10}{3}\right\}.
	$$
	In particular, if $p_g(X) \ge 6$, then $K_X^3 > \frac{4}{3} p_g(X)$.
\end{prop}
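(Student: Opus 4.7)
The plan is to assemble the proposition directly from the three preceding lemmas, followed by a one-line arithmetic verification of the final clause. The setup of Lemma~\ref{lem: q=1 canonical dim 2 case 1} and Lemma~\ref{lem: q=1 canonical dim 2 case 2} already exhausts every configuration: the general fibre $C$ of $\psi$ is either $f'$-vertical, or it is not; in the latter situation one only needs to separate $g(H')\ge 1$ from $g(H')=0$. This trichotomy is clearly exhaustive, so every $X$ under consideration falls into exactly one case.

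In the first case Lemma~\ref{lem: q=1 canonical dim 2 case 1} gives $K_X^3 \ge 2p_g(X)-2$, and under the alternative $g(H')\ge 1$, Lemma~\ref{lem: q=1 canonical dim 2 case 2}(1) gives the same bound. Since $2p_g(X)-2 > 2p_g(X)-\tfrac{10}{3}$ unconditionally, these are stronger than the minimum asserted in the statement and so contribute nothing restrictive. The remaining possibility, $C$ not $f'$-vertical and $g(H')=0$, is precisely the setting of Lemma~\ref{lem: q=1 canonical dim 2 case 2}(2), which yields exactly the bound $K_X^3 \ge \min\{\tfrac{5}{2}p_g(X)-5,\, 2p_g(X)-\tfrac{10}{3}\}$ claimed in the proposition. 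Combining the three cases gives the first inequality.

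For the ``in particular'' clause I would simply verify that both arguments of the minimum strictly exceed $\tfrac{4}{3}p_g(X)$ once $p_g(X)\ge 6$. The inequality $\tfrac{5}{2}p_g(X)-5 > \tfrac{4}{3}p_g(X)$ is equivalent to $\tfrac{7}{6}p_g(X)>5$, i.e.\ $p_g(X)>\tfrac{30}{7}$, and the inequality $2p_g(X)-\tfrac{10}{3} > \tfrac{4}{3}p_g(X)$ is equivalent to $\tfrac{2}{3}p_g(X)>\tfrac{10}{3}$, i.e.\ $p_g(X)>5$. Both hold as soon as $p_g(X)\ge 6$, so the strict inequality $K_X^3 > \tfrac{4}{3}p_g(X)$ follows.

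Because the substantive geometric work is entirely packaged in Lemma~\ref{lem: q=1 canonical dim 2 case 1} and Lemma~\ref{lem: q=1 canonical dim 2 case 2}, I do not anticipate any real obstacle here; the only point that warrants care is to present the trichotomy so that it visibly matches the hypotheses of those two lemmas, so the reader sees immediately that no further case has been overlooked.
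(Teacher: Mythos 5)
Your proposal is correct and matches the paper exactly: the paper's proof of this proposition is the one-line statement that it is ``just a combination of Lemma \ref{lem: q=1 canonical dim 2 case 1} and \ref{lem: q=1 canonical dim 2 case 2},'' which is precisely the case assembly you carry out, and your arithmetic check of the ``in particular'' clause ($p_g(X) > \frac{30}{7}$ and $p_g(X) > 5$ respectively) is the routine verification the paper leaves implicit. Nothing is missing.
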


\begin{proof}
	This is just a combination of Lemma \ref{lem: q=1 canonical dim 2 case 1} and \ref{lem: q=1 canonical dim 2 case 2}.
\end{proof}

\subsection{The case when $\dim \Sigma = 3$} We now treat the case when $\dim \Sigma = 3$.

\begin{prop} \label{prop: q=1 canonical dim 3}
	Suppose that $\dim \Sigma = 3$. Then $K_X^3 \ge 2p_g(X) - 3$.
\end{prop}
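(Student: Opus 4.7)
The plan is to mirror the proof of Proposition~\ref{prop: albdim 2 canonical dim 3} (the analogous statement for $q(X) \ge 2$), adapting to the weaker Severi inequality \eqref{eq: Severi q=1} available when $q(X) = 1$. Take the birational modification $\pi: X' \to X$ of Section~\ref{subsection: notation} with base-point-free $|M| = \movable|\rounddown{\pi^*K_X}|$ defining $\phi_M: X' \to \Sigma$, which is generically finite since $\dim \Sigma = 3$. Choose a smooth general $S \in |M|$ with minimal model contraction $\sigma: S \to S_0$. Since $M$ is big, $S$ surjects onto $B$ via $f'|_S$, so $q(S) \ge 1$ and $S$ is an irregular surface of general type. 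Note that $\dim \Sigma = 3$ and the non-degeneracy of $\Sigma \subset \PP^{p_g(X) - 1}$ force $p_g(X) \ge 4$.

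Applying Lemma~\ref{lem: h^0(2L)} with $L = M$ gives $h^0(X', 2S) \ge 4 p_g(X) - 6$. Combining with Kawamata--Viehweg vanishing and the exact sequence $0 \to \omega_{X'} \to \omega_{X'}(S) \to \omega_S \to 0$, exactly as in Proposition~\ref{prop: albdim 2 canonical dim 3}, yields
\[
\chi(\omega_X) + \chi(\omega_S) \;\ge\; h^0(X', K_{X'} + S) \;\ge\; h^0(X', 2S) \;\ge\; 4 p_g(X) - 6.
\]
Lemma~\ref{lem: extension} with $\lambda = 1$ provides $4 K_X^3 \ge K_{S_0}^2$, and \eqref{eq: Severi q=1} provides $\chi(\omega_X) \le K_X^3 / 2$. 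What remains is to bound $\chi(\omega_S)$ from above by a suitable multiple of $K_X^3$, depending on the Albanese geometry of $S$.

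If $\alb\dim S = 2$, Pardini's Severi inequality gives $K_{S_0}^2 \ge 4 \chi(\omega_S)$, so $\chi(\omega_S) \le K_X^3$, and summing yields $\tfrac{3}{2} K_X^3 \ge 4 p_g(X) - 6$, i.e.\ $K_X^3 \ge \tfrac{8}{3} p_g(X) - 4 \ge 2 p_g(X) - 3$. Otherwise $\alb\dim S = 1$; a Bertini connectedness argument applied to the general member $C := S \cap F' \in |M|_{F'}|$ (nef and big on $F'$ since $\phi_M|_{F'}$ is generically finite onto a surface in $\Sigma$, so $C^2 \ge 1$) forces $f'|_S: S \to B$ to have connected fibres, hence $q(S) = 1$ and the Albanese map of $S$ is $h = f'|_S: S \to B$ itself, with general fibre $C$ of genus $g(C) \ge 2$. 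Xiao's slope inequality for $h$ over the elliptic base $B$ gives $K_{S_0}^2 \ge \tfrac{4(g(C) - 1)}{g(C)} \chi(\omega_S)$; when $g(C) \ge 3$, this yields $\chi(\omega_S) \le \tfrac{3}{2} K_X^3$, so summing gives $2 K_X^3 \ge 4 p_g(X) - 6$ and $K_X^3 \ge 2 p_g(X) - 3$.

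The main obstacle is the subcase $g(C) = 2$, where Xiao only reproduces Bombieri's bound $K_{S_0}^2 \ge 2 \chi(\omega_S)$ and the naive combination falls short. The plan is to rule it out geometrically: adjunction on $F'$ gives $2 = K_F \cdot C_0 + C^2$ where $C_0 := (\pi|_{F'})_* C$, and together with Hodge index on $F$ and the hypothesis $K_F^2 \ge 2$ (valid whenever $F$ is not a $(1,2)$-surface and $K_F^2 \ge 2$) this produces a numerical contradiction. The residual case $K_F^2 = 1$ forces $p_g(F) = 1$ by Noether's inequality $K_F^2 \ge 2 p_g(F) - 4$ together with the exclusion of $(1,2)$-surfaces (the remaining option $p_g(F) = 0$ is incompatible with $p_g(X) \ge 4$), so $f_*\omega_X$ is a line bundle on the elliptic curve $B$ with $h^0 = p_g(X) \ge 4$, hence $\deg f_*\omega_X = p_g(X)$, and the three-fold slope inequality \eqref{eq: slope q=1} then yields $K_X^3 \ge 2 p_g(X) > 2 p_g(X) - 3$, completing the argument.
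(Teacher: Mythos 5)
Your proposal is correct, and its backbone is the same as the paper's: the estimate $\chi(\omega_X)+\chi(\omega_S)\ge 4p_g(X)-6$ via Lemma \ref{lem: h^0(2L)} and Kawamata--Viehweg vanishing, the extension bound \eqref{eq: volume extension}, the Severi inequality \eqref{eq: Severi q=1}, and Xiao's slope inequality, combined at the end exactly as you do. Where you genuinely diverge is in controlling the fibre genus, and here the paper has a uniform trick that makes your entire case analysis unnecessary: writing $C$ for the general fibre of the Stein factorization $h: S \to B'$ of $f'|_S$, the finiteness of $\phi_M|_C$ gives $h^0(C, M|_C)\ge 2$, hence $(M\cdot C)\ge 2$ since $g(C)\ge 2$, and then adjunction together with effectivity of $K_{X'}-M$ yields $\deg K_C = \left((K_{X'}+M)\cdot C\right)\ge 2(M\cdot C)\ge 4$, so $g(C)\ge 3$ always; Xiao with slope $\frac{4g(C)-4}{g(C)}\ge\frac{8}{3}$ then finishes in one stroke, with no split into $\alb\dim S=2$ versus $\alb\dim S=1$, no genus-$2$ subcase, and no need for connected fibres (Xiao is run over the Stein base $B'$, where $g(B')\ge 1$ keeps the relative-to-absolute correction terms harmless). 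Your substitute for the missing bound $g(C)\ge 3$ --- Bertini irreducibility of $S\cap F'$, adjunction on $F'$ forcing $K_{F'}\cdot C = C^2 = 1$ when $g(C)=2$, Hodge index forcing $K_F^2=1$, then $p_g(F)=1$ and the slope inequality \eqref{eq: slope q=1} giving $K_X^3\ge 2p_g(X)$ --- is valid, and in fact mirrors the paper's own treatment of the $K_F^2=1$ case in Proposition \ref{prop: q=1 canonical dim 1}; your Pardini branch for $\alb\dim S=2$ is likewise sound but redundant once $g(C)\ge 3$ is available. Two points of hygiene, both repairable in a line: your display ``$2 = K_F\cdot C_0 + C^2$'' conflates $K_{F'}\cdot C$ with $K_F\cdot C_0$ (the correct statement is $2 = K_{F'}\cdot C + C^2$ with $K_{F'}\cdot C \ge K_F\cdot C_0 \ge 1$, the latter because $C_0$ moves on the minimal surface $F$ of general type), and the connectedness step needs the standard genericity interchange between varying $S$ for fixed general $F'$ and varying the fibre for fixed general $S$. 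In short: your route is correct but longer and case-heavy; the paper's single genus estimate buys uniformity and brevity.
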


\begin{proof}
	Let $S \in |M|$ be a general member. Then $S$ is a smooth surface of general type. Since $S$ admits a surjective morphism $f'|_S: S \to B$, we know that $q(S) > 0$. Let $h: S \to B'$ be the Stein factorization of $f'|_S$ with a general fibre $C$. Then $g(C) \ge 2$. Let $\sigma: S \to S_0$ be the contraction onto the minimal model of $S$. Since $g(B') > 0$, $h$ naturally descends to a fibration $h_0: S_0 \to B'$ with a general fibre $C_0 = \sigma_*C$.
	
	Since $\phi_M|_C$ is a finite morphism, we know that $h^0(C, M|_C) \ge 2$. In particular, $(M \cdot C) \ge 2$. By the adjunction, we deduce that
	$$
	\deg K_C = (K_S \cdot C) = \left( (K_{X'} + M) \cdot C \right) \ge 2(M \cdot C) \ge 4.
	$$
	Thus $g(C) \ge 3$. By \cite[Theorem 2]{Xiao}, we know that
	$$
	K_{S_0}^2 \ge \left(\frac{4g(C_0) - 4}{g(C_0)}\right) \chi(\omega_S) \ge \frac{8}{3} \chi(\omega_S).
	$$
	As in the proof of Proposition \ref{prop: albdim 2 canonical dim 3}, we still have
	$$
	\chi(\omega_X) + \chi(\omega_S) \ge 4p_g(X) - 6.
	$$
	Combine the above inequalities with \eqref{eq: volume extension} and \eqref{eq: Severi q=1}. It follows that
	$$
	K_X^3 \ge \frac{1}{4}K_X^3 + \frac{3}{16} K_{S_0}^2 \ge \frac{1}{2}(\chi(\omega_X) + \chi(\omega_S)) \ge 2p_g(X) - 3.
	$$
	Thus the proof is completed.
\end{proof}

\subsection{The case when $X$ is Gorenstein} Finally, we treat the case when $X$ is Gorenstein.

\begin{lemma} \label{lem: Sym2 injective}
	Suppose that $X$ is Gorenstein. If the map $\Sym^2 H^0(F, K_F) \to H^0(F, 2K_F)$ is injective, then
	$$
	K_X^3 \ge \left(2p_g(F) - 4\right) \deg f_*\omega_X + h^1(B, f_* \omega_X) \left(h^1(B, f_* \omega_X) + 1\right).
	$$
\end{lemma}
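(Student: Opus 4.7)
The plan is to push the hypothesised injectivity on a general fibre up to a sheaf injection on $B$, and then combine Riemann-Roch on $X$ with a cohomology count on the elliptic base. Write $\CE := f_*\omega_X$, of rank $r = p_g(F)$ and degree $e := \deg \CE$, and put $h := h^1(B, \CE)$; by Fujita, $\CE$ is nef.

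Since $q(F) = q(X) - g(B) = 0$, cohomology-and-base-change identifies $\CE \otimes k(b) \cong H^0(F_b, K_{F_b})$ and $f_*\CO_X(2K_X) \otimes k(b) \cong H^0(F_b, 2K_{F_b})$ for generic $b$. The hypothesis forces the natural multiplication $\Sym^2 \CE \to f_*\CO_X(2K_X)$ to be injective on the generic fibre; between locally free sheaves on the curve $B$ this promotes to a sheaf injection. Leray plus Kawamata-Viehweg vanishing gives
$$h^0(X, 2K_X) = h^0(B, f_*\CO_X(2K_X)) \ge h^0(B, \Sym^2 \CE).$$

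Next I would invoke Riemann-Roch on both sides. Gorenstein Riemann-Roch on $X$ at $n = 1, 2$, combined with the Serre-duality identity $\chi(\omega_X) = -\chi(\CO_X)$, gives $h^0(X, 2K_X) = \tfrac{1}{2}K_X^3 + 3\chi(\omega_X)$, while Leray (using $R^1 f_*\omega_X = 0$ from $q(F) = 0$ and $R^2 f_*\omega_X = \CO_B$) identifies $\chi(\omega_X) = \chi(\CE) = e$. On the elliptic base, $h^0(B, \Sym^2 \CE) = (r+1)e + h^1(B, \Sym^2 \CE)$. Combining these reduces the lemma to proving $h^1(B, \Sym^2 \CE) \ge \binom{h+1}{2}$.

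For this last estimate I would decompose $\CE$ into Atiyah indecomposables. Since $\CE$ is nef and $h^1(B, \CE) = h$, exactly $h$ summands are of Atiyah type (degree $0$, nonzero $h^0$), say $F_{n_1}, \ldots, F_{n_h}$, each self-dual. By Serre duality,
$$h^1(B, \Sym^2 \CE) = h^0(B, \Sym^2 \CE^*) \ge h^0\bigl(B, \Sym^2(F_{n_1}\oplus\cdots\oplus F_{n_h})\bigr),$$
and the Clebsch-Gordan formula $F_m \otimes F_n \cong \bigoplus_{i=0}^{\min(m,n)-1} F_{m+n-1-2i}$ (together with the induced decomposition of $\Sym^2 F_n$, which contains $F_{2n-1}$) supplies at least one Atiyah-type summand in each diagonal piece $\Sym^2 F_{n_j}$ and at least $\min(n_j, n_k) \ge 1$ in every off-diagonal piece $F_{n_j}\otimes F_{n_k}$ with $j<k$, totalling $h + \binom{h}{2} = \binom{h+1}{2}$. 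Assembling everything then gives $\tfrac{1}{2}K_X^3 + 3e \ge (r+1)e + \binom{h+1}{2}$, i.e.\ $K_X^3 \ge (2p_g(F) - 4)\deg f_*\omega_X + h(h+1)$, as claimed. The delicate step is the Atiyah bookkeeping in the final paragraph; the surrounding machinery (base change, Leray, Gorenstein Riemann-Roch) is routine.
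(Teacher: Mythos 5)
Your proposal follows the same skeleton as the paper's proof: use the hypothesis to inject $\Sym^2 f_*\omega_X$ into $f_*\CO_X(2K_X)$, compare $h^0(X, 2K_X) = \frac{1}{2}K_X^3 + 3\chi(\omega_X)$ against $h^0(B, \Sym^2 f_*\omega_X) = (p_g(F)+1)\deg f_*\omega_X + h^1(B, \Sym^2 f_*\omega_X)$, and extract a contribution of $\binom{q_f+1}{2}$ from $h^1(B, \Sym^2 f_*\omega_X)$, where $q_f = h^1(B, f_*\omega_X)$. You deviate in two harmless ways. First, the paper routes the comparison through the saturation $\CF$ of $\Sym^2 f_*\omega_X$ in $f_*\omega_X^{\otimes 2}$ and a degree count using $h^1(B, f_*\omega_X^{\otimes 2}) = 0$ (Kawamata--Viehweg plus Leray); your direct monotonicity $h^0(X,2K_X) \ge h^0(B,\Sym^2 f_*\omega_X)$ sidesteps that bookkeeping and is arguably cleaner. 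Second, where the paper invokes Fujita's decomposition $f_*\omega_X = \CO_B^{\oplus q_f} \oplus \CV$ with $h^1(B,\CV)=0$ --- so that the summand $\Sym^2 \CO_B^{\oplus q_f}$ instantly yields $h^1 \ge \binom{q_f+1}{2}$ --- you allow general Atiyah summands $F_{n_j}$ and run the Clebsch--Gordan count. Your count is correct ($\Sym^2 F_n \supseteq F_{2n-1}$ and $h^0(F_m \otimes F_n) = \min(m,n)$, each Atiyah piece contributing $h^0 = h^1 = 1$), but it is more work than Fujita's theorem makes necessary.

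There is one genuine misstep: the claim $q(F) = q(X) - g(B) = 0$ is not a valid deduction. From $q(X) = 1$ and the Leray sequence one only gets $h^0(B, R^1 f_*\CO_X) = 0$, and $R^1 f_*\CO_X \cong (R^1 f_*\omega_{X/B})^{\vee}$ can have positive rank $q(F)$ with no global sections (e.g.\ dual of a nef bundle of degree zero with no trivial summand), so $q(F) > 0$ is not excluded in this section, where $F$ is only assumed not to be a $(1,2)$-surface. Consequently your assertions $R^1 f_*\omega_X = 0$ and $\chi(\omega_X) = \deg f_*\omega_X$ are unjustified. Fortunately, $\chi(\omega_X)$ enters your final assembly with a negative coefficient, so all you need is the upper bound $\chi(\omega_X) \le \deg f_*\omega_X$, and this does hold: as the paper argues, $q(X)=1$ gives $\chi(\omega_X) = p_g(X) - h^2(X, \CO_X) \le h^0(B, f_*\omega_X) - h^1(B, f_*\omega_X) = \deg f_*\omega_X$ by Leray and Riemann--Roch on the genus-one base (alternatively, use Koll\'ar: $R^2 f_*\omega_X \cong \omega_B$ and $R^1 f_*\omega_X$ has nonnegative degree). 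With the equality replaced by this inequality, your argument goes through and yields exactly the stated bound $K_X^3 \ge (2p_g(F)-4)\deg f_*\omega_X + q_f(q_f+1)$.
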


\begin{proof}
	Denote $q_f = h^1(B, f_* \omega_X)$. By \cite[Theorem 3.1]{Fujita}, we have
	$$
	f_* \omega_X = \CO_B^{\oplus q_f} \oplus \CV,
	$$
	where $\rank \CV = p_g(F) - q_f$ and $h^1(B, \CV) = 0$. By the assumption, we have $\Sym^2f_*\omega_X\subseteq f_*\omega_X^{\otimes 2}$. Denote by $\CF$ the saturation of $\Sym^2 f_*\omega_X$ in $f_*\omega_X^{\otimes 2}$. Then
	we have the following short exact sequence
	$$
	0 \to \CF \to f_* \omega_X^{\otimes 2} \to \CQ \to 0.
	$$
	By the Kawamata-Viehweg vanishing theorem and the Leray spectral sequence, we know that $h^1(B, f_* \omega_X^{\otimes 2}) = 0$. Via the long exact sequence, we deduce that $h^1(B, \CQ) = 0$ and 
	$h^0(B, \CQ)  \ge h^1(B, \CF)$. Thus by the Riemann-Roch theorem, we have
	$$
	\deg f_* \omega_X^{\otimes 2}  =  \deg \CF + \deg \CQ = h^0(B, \CF) - h^1(B, \CF) + h^0(B, \CQ)
	\ge h^0(B, \CF). 
	$$
    Note that 
    $$
    \deg \left(\Sym^2 f_* \omega_X\right) = \left(p_g(F) + 1\right) \deg f_* \omega_X
    $$ 
    and 
    $$
    h^1 \left(B, \Sym^2 f_* \omega_X\right) \ge h^1 \left(B, \Sym^2 \CO_B^{\oplus q_f}\right) = \frac{q_f(q_f + 1)}{2}.
    $$ 
    By the Riemann-Roch theorem, we know that
    $$
    h^0(B, \CF) \ge h^0 \left(B, \Sym^2 f_*\omega_X\right) = \left(p_g(F) + 1\right) \deg f_* \omega_X + \frac{q_f(q_f + 1)}{2}.
    $$
    On the other hand, since $h^1(B, f_* \omega_X^{\otimes 2}) = 0$, by the Riemann-Roch formula, we know that
    $$
    \deg f_* \omega_X^{\otimes 2} = h^0(B, f_* \omega_X^{\otimes 2}) = h^0(X, 2K_X) = \frac{1}{2} K_X^3 + 3 \chi(\omega_X).
    $$
    Moreover, since $q(X) = 1$, by the Leray spectral sequence, we have
    $$
    \chi(\omega_X) = p_g(X) - h^2(X, \CO_X) \le h^0(B, f_*\omega_X) - h^1(B, f_* \omega_X) = \deg f_* \omega_X.
    $$
    Combine the above (in)equalities together, and it follows that
    $$
    K_X^3 \ge 2 \deg f_* \omega_X^{\otimes 2} - 6 \chi(\omega_X)
    \ge \left(2p_g(F) - 4\right) \deg f_*\omega_X + q_f(q_f + 1).
    $$
    The proof is completed.
\end{proof}

\begin{lemma} \label{lem: q=1 pg=2}
	Suppose that $X$ is Gorenstein with $p_g(X) = 2$. Then $K_X^3 \ge 4$.
\end{lemma}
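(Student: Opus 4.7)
The plan is to reduce the statement to a single critical sub-case in which $h^2(X,\CO_X)=1$ and $K_X^3=2$, and then derive a contradiction there using the Fujita decomposition of $f_*\omega_X$ together with a base-change trick. If $h^2(X,\CO_X)=0$, then $\chi(\omega_X)=p_g(X)+q(X)-1=2$ and the Severi-type inequality \eqref{eq: Severi q=1} immediately yields $K_X^3\ge 2\chi(\omega_X)=4$. Otherwise, Gorenstein minimality of general type forces $\chi(\omega_X)\ge 1$; combined with $\chi(\omega_X)=2-h^2(X,\CO_X)$ this pins down $h^2(X,\CO_X)=1$ and $\chi(\omega_X)=1$. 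Since $K_X^3$ is a positive even integer by \cite[\S 2.2]{Chen_Chen_Zhang}, I only need to exclude $K_X^3=2$, under which Riemann--Roch fixes $h^0(X,2K_X)=4$.

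Next I would apply Fujita's decomposition on the elliptic curve $B$ to write $f_*\omega_X=\CO_B^{\oplus q_f}\oplus \CV$ with $h^1(B,\CV)=0$, so that $\deg f_*\omega_X=2-q_f$ and $q_f\in\{0,1,2\}$. The case $q_f=0$ gives $K_X^3\ge 4$ via the slope inequality \eqref{eq: slope q=1}. For $q_f=2$ (which forces $p_g(F)\ge 2$), I would apply Lemma \ref{lem: Sym2 injective} after verifying that the multiplication map $\Sym^2 H^0(F,K_F)\to H^0(F,2K_F)$ is injective -- automatic when $p_g(F)=2$ because the canonical image of $F$ is a $\PP^1$, and verified for $p_g(F)\ge 3$ via non-containment of the canonical image in a quadric -- obtaining $K_X^3\ge q_f(q_f+1)=6$. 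For $q_f=1$ and $p_g(F)\ge 3$ the same lemma gives $K_X^3\ge 2p_g(F)-2\ge 4$; the combination $q_f=1,\,p_g(F)=1$ is incompatible with $h^0(B,f_*\omega_X)=2$ on $B$, since a degree-one line bundle on an elliptic curve has only one global section. The only remaining case is $q_f=1$ and $p_g(F)=2$, so $f_*\omega_X=\CO_B\oplus\CL$ with $\deg\CL=1$, and $K_F^2\ge 2$ because $F$ is not a $(1,2)$-surface.

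In this critical case I would pass to an \'etale cover $\nu:C\to B$ of degree $n$ and form $Y=X\times_B C$. Then $Y$ is a Gorenstein minimal $3$-fold of general type with $q(Y)=1$, $p_g(Y)=n+1$, $K_Y^3=2n$, and general Albanese fibre still $F$ (not a $(1,2)$-surface). I would now play the propositions of this section against $Y$: if $\dim\Sigma_Y=1$, Proposition \ref{prop: q=1 canonical dim 1}(3) with $n\ge 4$ gives $K_Y^3\ge 2(n+1)>2n$, a contradiction; if $\dim\Sigma_Y\ge 2$, then from the structure of the canonical pencil on each Albanese fibre, the general fibre $C$ of $\psi_Y$ (in the notation of Lemma \ref{lem: q=1 canonical dim 2 case 1}) is $f'_Y$-vertical and its image $C_0\subset F$ is a general member of $|K_F|$ without fixed part, so $K_F\cdot C_0=K_F^2$. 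Lemma \ref{lem: q=1 canonical dim 2 case 1} then yields $K_Y^3\ge d_Y\cdot K_F^2\ge n\cdot K_F^2$, contradicting $K_Y^3=2n$ as soon as $K_F^2\ge 3$.

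The main obstacle is the residual sub-case $K_F^2=p_g(F)=2$, in which $F$ is a Horikawa surface: every naive slope or Severi inequality for $Y$ becomes tight (each one reduces to $K_Y^3\ge 2n$ with equality), so the needed contradiction must be extracted from finer geometric data. My plan here is to exploit Horikawa's explicit description of minimal surfaces with $(K^2,p_g)=(2,2)$ -- in particular the structure of $|K_F|$ and of the bicanonical map $\phi_{2K_F}$ -- in order to produce the strict refinement $d_Y\ge p_g(Y)$ (equivalently, a strictly better lower bound on $\deg\Sigma_Y$ inside $\PP^{p_g(Y)-1}$). This would force $K_Y^3\ge 2 d_Y\ge 2(n+1)>2n$, contradicting $K_Y^3=2n$ and completing the proof.
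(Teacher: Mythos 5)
Your reduction and the easy cases are sound and essentially match the paper: parity of $K_X^3$ reduces everything to excluding $K_X^3=2$; the case $h^1(B,f_*\omega_X)=0$ (your $q_f=0$) follows from the slope inequality \eqref{eq: slope q=1} exactly as in the paper; your $q_f=2$ case is vacuous anyway since $h^1(B,f_*\omega_X)\le h^2(X,\CO_X)=1$ by Leray; and for $q_f=1$, $p_g(F)=3$ you invoke Lemma \ref{lem: Sym2 injective} just as the paper does. But there are two genuine gaps. First, for $p_g(F)\ge 4$ your claimed verification that $\Sym^2 H^0(F,K_F)\to H^0(F,2K_F)$ is injective ``via non-containment of the canonical image in a quadric'' is unsupported: the canonical image of a surface can perfectly well lie on a quadric (e.g.\ a quadric surface in $\PP^3$ when $p_g(F)=4$), which is precisely why the paper abandons Lemma \ref{lem: Sym2 injective} there and instead uses Ohno's slope bound $K_X^3>\bigl(\frac{4p_g(F)-8}{p_g(F)}\bigr)\deg f_*\omega_X\ge 2$, with ampleness of $f_*\omega_X^{\otimes m}$ from Chen--Hacon.

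Second, and more seriously, your treatment of the critical case $q_f=1$, $p_g(F)=2$ would fail. Beyond the unproven assertion $q(Y)=1$ (which could jump under the \'etale base change if $R^1f_*\CO_X$ contains a torsion piece trivialized by $\nu$ -- patchable via Theorem \ref{thm: q>1}, but not addressed), the terminal step aims at a statement that is false. Under your standing assumptions $f_*\omega_X=\CO_B\oplus\CL$ with $\deg\CL=1$ and $K_X^3=2$, one has $H^0(K_Y)=H^0(\CO_C)\oplus H^0(\nu^*\CL)$, and the sections coming from $\nu^*\CL$ are proportional along each fibre; hence the canonical image $\Sigma_Y$ is the cone over the elliptic curve $\phi_{\nu^*\CL}(C)\subset\PP^{n-1}$ with vertex the point cut out by the $\CO_C$-summand, a surface of degree exactly $n=p_g(Y)-1$. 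So when $\tau$ is birational, $d_Y=p_g(Y)-1$, and no appeal to Horikawa's classification of $F$ can produce your hoped refinement $d_Y\ge p_g(Y)$: the degree of $\Sigma_Y$ is forced by the splitting of $f_*\omega_X$, not by the fine structure of $F$. As you yourself observe, every slope/Severi bound is tight here ($K_Y^3\ge 2n$ with equality), so no contradiction can be extracted from this circle of degree estimates; note also that if the $\psi_Y$-fibre were horizontal, Lemma \ref{lem: q=1 canonical dim 2 case 2} only gives $K_Y^3\ge 2p_g(Y)-2=2n$ or $2p_g(Y)-\frac{10}{3}<2n$, again no contradiction. The paper instead kills this case directly on $X$, with no base change: the summand $\CL$ makes $K_X-F$ pseudo-effective (via \cite[Corollary 3.5]{Miyaoka}), then \cite[Theorem 1.4]{Ohno} and \cite[Lemma 4.1]{Chen_Hacon2} upgrade this to nefness of $K_X-\epsilon F$ for some $\epsilon>0$, whence $K_X^3>\left((K_X-\epsilon F)^2\cdot F\right)=K_F^2\ge 2$, the last step using that $F$ is not a $(1,2)$-surface. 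You should replace your entire \'etale-cover scheme in this case by an argument of this strict-positivity type.
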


\begin{proof}
	Since $K_X^3$ is an even integer, to prove this lemma, it suffices to prove that $K_X^3 > 2$. Now $\chi(\omega_X) = p_g(X) - h^2(X, \CO_X) > 0$. By the Leray spectral sequence, we know that $h^1(B, f_*\omega_X) \le h^2(X, \CO_X) \le 1$.
	
	If $h^1(B, f_*\omega_X) = 0$, by \eqref{eq: slope q=1} and the Riemann-Roch theorem, we have 
	$$
	K_X^3 \ge 2\deg f_* \omega_X = 2h^0(B, f_*\omega_X) = 2p_g(X).
	$$
	From now on, we assume that $h^1(B, f_*\omega_X) = 1$. By the Riemann-Roch theorem, $\deg f_* \omega_X = h^0(B, f_*\omega_X) - h^1(B, f_*\omega_X) = 1$. Thus $f_*\omega_X$ cannot be a line bundle. In particular, $p_g(F) = \rank f_* \omega_X \ge 2$. If $p_g(F) = 2$, by \cite[Theorem 2.1]{Fujita}, $f_* \omega_X = \CO_B \oplus \CL$ where $\CL$ is a line bundle on $B$ with $\deg \CL = 1$. By \cite[Corollary 3.5]{Miyaoka}, $K_X - F$ is pseudo-effective. By \cite[Theorem 1.4]{Ohno} and \cite[Lemma 4.1]{Chen_Hacon2}, $K_X - \epsilon F$ is nef for some $\epsilon > 0$. Thus
	$$
	K_X^3 > \left((K_X - \epsilon F)^2 \cdot K_X\right) \ge \left((K_X - \epsilon F)^2 \cdot F\right) = K_F^2 \ge 2,
	$$
	where the last inequality holds since $F$ is not a $(1, 2)$-surface. If $p_g(F) = 3$, then the natural map $\Sym^2 H^0(F, K_F) \to H^0(F, 2K_F)$ is injective. Thus by Lemma \ref{lem: Sym2 injective}, we have
	$$
	K_X^3 \ge 2 \deg f_* \omega_X + 2 = 4.
	$$
	If $p_g(F) \ge 4$, since $f_* \omega_{X}^{\otimes m}$ is ample for any $m \ge 2$ \cite[Lemma 4.1]{Chen_Hacon2}, by \cite[Main Theorem 1]{Ohno} and the argument in the proof of \cite[Proposition 2.1]{Ohno} (see the inequality $(*)$ there), we know that
	$$
	K_X^3 > \left(\frac{4p_g(F) - 8}{p_g(F)}\right) \deg f_* \omega_X \ge 2 \deg f_* \omega_X = 2.
	$$
	As a result, we always have $K_X^3 > 2$, and the proof is completed.
\end{proof}

\begin{prop} \label{prop: q=1 Gorenstein}
	Suppose that $X$ is Gorenstein. Then
	$$
	K_X^3 \ge \left\{
	\begin{array}{ll}
		4, & p_g(X) = 2; \\
		2p_g(X) - 2, & p_g(X) \ge 3.
	\end{array}
	\right.
	$$
\end{prop}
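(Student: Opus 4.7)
The plan is to combine Lemma \ref{lem: q=1 pg=2} with the three canonical-image-dimension propositions (\ref{prop: q=1 canonical dim 1}, \ref{prop: q=1 canonical dim 2}, \ref{prop: q=1 canonical dim 3}) already established in this section, exploiting the Gorenstein parity fact from \cite[\S 2.2]{Chen_Chen_Zhang}: since $X$ is Gorenstein, $K_X^3$ is a positive even integer. This parity lets us upgrade any estimate of the form $K_X^3 > 2p_g(X) - 4$ to the desired $K_X^3 \ge 2p_g(X) - 2$.

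The case $p_g(X) = 2$ is exactly Lemma \ref{lem: q=1 pg=2}, so I assume $p_g(X) \ge 3$ and let $\Sigma$ be the canonical image of $X$. Since $\Sigma \subseteq \PP^{p_g(X) - 1}$, we have $\dim \Sigma \in \{1,2,3\}$, and $\dim \Sigma = 3$ forces $p_g(X) \ge 4$. When $\dim \Sigma = 3$, Proposition \ref{prop: q=1 canonical dim 3} gives $K_X^3 \ge 2p_g(X) - 3$, and the parity upgrade yields $K_X^3 \ge 2p_g(X) - 2$.

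When $\dim \Sigma = 2$, Proposition \ref{prop: q=1 canonical dim 2} provides
$$
K_X^3 \ge \min\left\{\tfrac{5}{2}p_g(X) - 5,\ 2p_g(X) - \tfrac{10}{3}\right\}.
$$
For $p_g(X) \ge 6$ the first term is already at least $2p_g(X) - 2$, so the conclusion is immediate. For $3 \le p_g(X) \le 5$, a direct numerical check shows the right-hand side exceeds $2p_g(X) - 4$ (namely $2.5, 4.67, 6.67$ versus $2, 4, 6$ respectively), so the parity argument again delivers $K_X^3 \ge 2p_g(X) - 2$. When $\dim \Sigma = 1$, Proposition \ref{prop: q=1 canonical dim 1} gives $K_X^3 \ge \tfrac{32}{9}$, $\tfrac{27}{4}$, or $2p_g(X)$ according to whether $p_g(X) = 3$, $p_g(X) = 4$, or $p_g(X) \ge 5$; in each subcase, rounding up to the nearest positive even integer yields $K_X^3 \ge 2p_g(X) - 2$.

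There is no substantial obstacle here: the proposition is essentially a bookkeeping synthesis of earlier results combined with the Gorenstein parity constraint. The only subtle point is verifying that each small-$p_g(X)$ subcase survives the upgrade, which is a matter of an elementary inequality check in each slot.
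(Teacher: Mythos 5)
Your proposal is correct and takes essentially the same route as the paper, which proves Proposition \ref{prop: q=1 Gorenstein} by exactly this synthesis of Propositions \ref{prop: q=1 canonical dim 1}, \ref{prop: q=1 canonical dim 2}, \ref{prop: q=1 canonical dim 3} and Lemma \ref{lem: q=1 pg=2}, using the evenness of $K_X^3$ from \cite[\S 2.2]{Chen_Chen_Zhang}. One small slip: in the $\dim \Sigma = 2$ case with $p_g(X) \ge 6$ the minimum is actually the second term $2p_g(X) - \frac{10}{3}$ (which is below $2p_g(X) - 2$), so the conclusion is not ``immediate'' there, but your parity upgrade still applies since $2p_g(X) - \frac{10}{3} > 2p_g(X) - 4$.
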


\begin{proof}
	As $X$ is Gorenstein, we know that $K_X^3 > 0$ is even \cite[\S 2.2]{Chen_Chen_Zhang}. Denote by $\Sigma$ the canonical image of $X$. If $\dim \Sigma = 3$, then the result follows from Proposition \ref{prop: q=1 canonical dim 3}. If $\dim \Sigma = 2$, then the result follows from Proposition \ref{prop: q=1 canonical dim 2}. If $\dim \Sigma = 1$, then the result follows from Proposition \ref{prop: q=1 canonical dim 1} for $p_g(X) \ge 3$ and Lemma \ref{lem: q=1 pg=2} for $p_g(X) = 2$. Thus the whole proof is completed.
\end{proof}

\subsection{Proof of Theorem \ref{thm: q=1 2}} Now it is clear that Theorem \ref{thm: q=1 2} is a combination of Proposition \ref{prop: q=1 canonical dim 1}, \ref{prop: q=1 canonical dim 2}, \ref{prop: q=1 canonical dim 3} and \ref{prop: q=1 Gorenstein}.

\section{Proof of main theorems}
We are now ready to prove the three main theorems in this paper.

\subsection{Proof of Theorem \ref{thm: main1}}
Since the canonical volume and the geometric genus are birational invariants, we may assume that $X$ is minimal.

If $q(X) \ge 2$, the result follows from Theorem \ref{thm: q>1}. If $q(X) = 1$, the result follows from Theorem \ref{thm: q=1 1} and \ref{thm: q=1 2}.

\subsection{Proof of Theorem \ref{thm: main2}} As in the proof of Theorem \ref{thm: main1}, we may assume that $X$ is minimal with $K_X^3 = \frac{4}{3}p_g(X)$ and $p_g(X) \ge 16$. By Theorem \ref{thm: q>1}, $q(X) = 1$. Thus the general Albanese fibre $F$ is a minimal surface of general type. By Theorem \ref{thm: q=1 2}, $F$ is a $(1, 2)$-surface. Then we apply Theorem \ref{thm: q=1 1} to deduce that $h^2(X, \CO_X) = 0$. As a result, we have $K_X^3 = \frac{4}{3}\chi(\omega_X)$.
By \cite[Theorem 1.3]{Hu_Zhang2}, the canonical model of $X$ has an explicit description. 

\subsection{Proof of Theorem \ref{thm: main3}}
We may again assume that $X$ is minimal. Then the result just follows from Theorem \ref{thm: q>1}.

\bibliography{Ref_Noether}
\bibliographystyle{amsplain}

\end{document}